\newtheorem{thm}{Theorem}[section]
\newtheorem*{thm-non}{Theorem}
\newtheorem*{lem-non}{Lemma}
\newtheorem{clm}[thm]{Claim}
\DeclareMathOperator{\divergence}{div}
\DeclareMathOperator{\curl}{curl}
\DeclareMathOperator{\dist}{dist}
\DeclareMathOperator{\spn}{span}
\numberwithin{equation}{section}
\title{Existence of Large-Data Global Weak Solutions to Kinetic Models of Nonhomogeneous Dilute Polymeric Fluids}
\author{Chuhui He\thanks{chuhui.he@maths.ox.ac.uk} ~and Endre S\"uli\thanks{endre.suli@maths.ox.ac.uk}}
\affil{\small Mathematical Institute, University of Oxford, \\Woodstock Road, Oxford OX2 6GG, UK}
\date{}
\begin{document}
\maketitle

\begin{abstract}
We prove the existence of large-data global-in-time weak solutions to a general class of coupled bead-spring chain models with finitely extensible nonlinear elastic (FENE) type spring potentials for nonhomogeneous incompressible dilute polymeric fluids in a bounded domain in $\mathbb{R}^d$, $d=2$ or $3$. The class of models under consideration involves the Navier--Stokes system with variable density, where the viscosity coefficient depends on both the density and the polymer number density, coupled to a Fokker--Planck equation with a density-dependent drag coefficient. The proof is based on combining a truncation of the probability density function with a two-stage Galerkin approximation and weak compactness and compensated compactness techniques to pass to the limits in the sequence of Galerkin approximations and in the truncation level.
\end{abstract}

\section{Introduction}
The aim of this paper is to prove the existence of global-in-time large-data weak solutions to a Navier--Stokes--Fokker--Planck system that arises in models of nonhomogeneous dilute polymeric fluids. The paper extends the results presented in \cite{MR2981267} to a more general class of models by using a different proof, which is also considerably simpler than the original proof presented in \cite{MR2981267}. We assume that the solvent, occupying a bounded open Lipschitz domain $\Omega \subset \mathbb{R}^d$, $d = 2$ or $3$, with boundary $\partial \Omega$, is an incompressible, viscous, isothermal Newtonian fluid with variable density $\rho$ and dynamic viscosity $\mu=\mu(\rho,\varrho)$, where $\varrho$ is the variable polymer number density. Let $T \in \mathbb{R}_{> 0}$ denote the length of the time interval of interest and let $Q \coloneqq \Omega \times (0,T)$ denote the associated space-time domain. We consider the following system of equations:
\begin{align}
\label{eq1} \frac{\partial \rho}{\partial t} + \nabla_x \cdot (\bm{v} \rho) & = 0 \qquad \qquad \qquad \ \, \text{in $Q$}, \\
\label{eq2} \frac{\partial (\rho \bm{v})}{\partial t} + \nabla_x \cdot (\rho \bm{v} \otimes \bm{v}) - \nabla_x \cdot (\mu(\rho, \varrho)D(\bm{v})) + \nabla_x p &= \rho \bm{f} + \nabla_x \cdot \tau \qquad \text{in $Q$}, \\
\label{eq3} \nabla_x \cdot \bm{v} &=0 \qquad \qquad \qquad \ \, \text{in $Q$},
\end{align}
subject to the initial conditions
\begin{align}\label{inicond}
\begin{aligned}
\rho(\cdot, 0) &= \rho_0(\cdot) \quad \qquad \ \ \,  \text{in $\Omega$}, \\
(\rho \bm{v})(\cdot, 0) & = (\rho_0 \bm{v}_0)(\cdot) \qquad \text{in $\Omega$},
\end{aligned}
\end{align}
and the no-slip boundary condition
\begin{equation}\label{eq6}
\bm{v} = \bm{0} \qquad \text{on $\partial \Omega \times (0,T)$}.
\end{equation}
It is assumed that each of the equations above has been written in a nondimensional form; $\rho: Q \to \mathbb{R}$ denotes the solvent density, $\bm{v}: Q \to \mathbb{R}^d$ denotes the solvent velocity, $\tau: Q \to \mathbb{R}^{d \times d}$ denotes the elastic extra-stress tensor (i.e., the polymeric part of the Cauchy stress tensor), $\bm{f}: Q \to \mathbb{R}^d$ represents the density of the external body forces, and $p: Q \to \mathbb{R}$ denotes the pressure. Here, $D(\bm{v}) \coloneqq \frac{1}{2}(\nabla_x \bm{v} + (\nabla_x \bm{v})^{\rm T})$ is the symmetric part of the velocity gradient.

In a bead-spring chain model for dilute polymers, consisting of $K+1$  beads  coupled with $K$ elastic springs to represent a polymer chain, the elastic extra-stress tensor $\tau$ is defined by a version of the Kramers expression, depending on the probability density function $\psi$ of the (random) conformation $\bm{q} \coloneqq ((\bm{q}^1)^{\rm T}, \dots, (\bm{q}^K)^{\rm T})^{\rm T} \in \mathbb{R}^{d \times K}$ of the chain, with the column vector $\bm{q}^j \coloneqq (q_1^j, \dots, q_d^j)^{\rm T}$ representing the $d$-component conformation vector of the $j$th spring in the bead-spring chain. Let $D \coloneqq D^1 \times \cdots \times D^K \subset \mathbb{R}^{d \times K}$ be the domain of admissible conformation vectors.  Typically $D^j$ is the whole space $\mathbb{R}^d$ or a bounded open ball centred at the origin $\bm{0}$ in $\mathbb{R}^d$, for each $j = 1, \dots, K$. When $K=1$, the model is referred to as the dumbbell model. 
We focus on the finitely extensible nonlinear elastic (FENE) model where $D^j = B(0, b^{\frac{1}{2}}_j)$, a ball centred at the origin $\bm{0}$ in $\mathbb{R}^d$ and of radius $b^{\frac{1}{2}}_j$, with $b_j > 0$ for each $j \in \{ 1,\dots, K \}$. The smooth function $U^j : [0, \frac{b_j}{2} ) \to [0, \infty), j = 1, \dots, K$, is the spring potential satisfying $U^j(0) = 0$, $\lim_{s \to \frac{b_j}{2}-} U^j(s) = + \infty$. The extra-stress tensor $\tau$ is then defined by the formula:
\begin{equation}\label{eq7}
\tau(x,t) \coloneqq k \left( \sum_{j=1}^K \int_D \psi(x, \bm{q}, t) \bm{q}^j {\bm{q}^j}^{\rm T} (U^j)^\prime \left( \frac{1}{2} |\bm{q}^j|^2 \right)\mathrm{d}\bm{q} - K\varrho(x,t) I \right),
\end{equation}
with $I$ denoting the $d \times d$ identity matrix, $\,\mathrm{d}\bm{q} \coloneqq \,\mathrm{d}\bm{q}^1 \cdots \,\mathrm{d}\bm{q}^K$, and the density of polymer chains (referred to as \textit{polymer number density}) located at $x$ at time $t$ given by
\begin{equation*}
\varrho(x,t) \coloneqq \int_D \psi(x, \bm{q},t) \,\mathrm{d}\bm{q}.
\end{equation*}

We define the (normalized) partial Maxwellian $M^j$ with respect to the variable $\bm{q}^j$ by
\begin{equation*}
M^j(\bm{q}^j) = \frac{1}{Z^j} \mathrm{e}^{-U^j(\frac{1}{2} |\bm{q}^j|^2)}, \quad \text{where}\  Z^j \coloneqq \int_{D^j} \mathrm{e}^{-U^j(\frac{1}{2} |\bm{q}^j|^2)} \,\mathrm{d}\bm{q}^j,
\end{equation*}
where $\,\mathrm{d}\bm{q}^j \coloneqq \mathrm{d}q^j_1 \cdots \mathrm{d}q^j_d$, $j = 1, \dots, K$. The Maxwellian in the model is then defined by 
\begin{equation*}
M(\bm{q}) \coloneqq \prod_{j=1}^K M^j(\bm{q}^j) \qquad \forall \, \bm{q} \coloneqq (\bm{q}^1, \cdots, \bm{q}^K) \in D \coloneqq D^1 \times \cdots \times D^K.
\end{equation*}

Observe that, for $\bm{q} \in D$ and $j=1,\dots, K$,
\begin{equation*}
M(\bm{q}) \nabla_{\bm{q}^j} (M(\bm{q}))^{-1} = -(M(\bm{q}))^{-1} \nabla_{\bm{q}^j} M(\bm{q}) = \nabla_{\bm{q}^j} U^j\left(\frac{1}{2}|\bm{q}^j|^2\right) = (U^j)^\prime \left(\frac{1}{2} |\bm{q}^j|^2\right) \bm{q}^j,
\end{equation*}
and, by definition,
\begin{equation*}
\int_D M(\bm{q}) \,\mathrm{d}\bm{q} = 1.
\end{equation*}

In the above equalities $\nabla_{\bm{q}^j} \coloneqq (\partial/\partial q_1^j, \dots,\partial/\partial q_d^j)^{\rm T}$, for $j=1,\dots,K$. Then we define $\divergence_{\bm{q}^j} \coloneqq \nabla_{\bm{q}^j} \cdot$. For a general mapping $\bm{q} \in D \mapsto B(\bm{q}) \in \mathbb{R}^{d \times K}$, we define $\divergence_{\bm{q}} B \coloneqq \divergence_{q^1} B^1 + \cdots + \divergence_{q^K} B^K$, where $B^j$, $j =1, \dots, K$, denote the $d$-component column vectors of the matrix $B = B(\bm{q})$. We define the $d \times K$-component differential operator $\nabla_{\bm{q}} \coloneqq (\nabla_{\bm{q}^1}, \dots, \nabla_{\bm{q}^K})$. We shall assume the following properties: for $j = 1,\dots, K$ there exist constants $c_{ji} > 0$, $i = 1,2,3,4$, and $\gamma_j > 1$ such that the spring potential $U^j$ satisfies
\begin{align}\label{eq10} 
c_{j1}[\dist(\bm{q}^j, \partial D^j)]^{\gamma_j} \leq M^j(\bm{q}^j) \leq c_{j2}[\dist(\bm{q}^j, \partial D^j)]^{\gamma_j} \qquad \forall \, \bm{q}^j \in D^j, \\
 c_{j3} \leq [\dist(\bm{q}^j, \partial D^j)] (U^j)^\prime \left( \frac{1}{2} |\bm{q}^j|^2 \right) \leq c_{j4} \qquad \forall \, \bm{q}^j \in D^j.\label{eq11}
\end{align}

Since $[U^j(\frac{1}{2} |\bm{q}^j|^2)]^2 = (-\log M^j(\bm{q}^j) + C)^2$, it follows from (\ref{eq10}) and (\ref{eq11}) that
\begin{equation*}
\int_{D^j} \left[1 + \left[U^j \left(\frac{1}{2} |\bm{q}^j|^2 \right)\right]^2 + \left[(U^j)^\prime \left(\frac{1}{2} |\bm{q}^j|^2 \right) \right]^2 \right] M^j(\bm{q}^j) \,\mathrm{d}\bm{q}^j < \infty, \qquad j = 1,\dots, K.
\end{equation*}
The probability density function $\psi$ satisfies the following Fokker--Planck equation:
\begin{align}\begin{aligned}\label{eq15}
\frac{\partial \psi}{\partial t} + \nabla_x \cdot (\bm{v} \psi) &+ \sum_{j=1}^K \nabla_{\bm{q}^j} \cdot ((\nabla_x \bm{v}) \bm{q}^j \psi) \\
&\quad = \varepsilon \Delta_x \left( \frac{\psi}{\zeta(\rho)} \right) + \frac{1}{4\lambda} \sum_{i=1}^K \sum_{j=1}^K A_{ij} \nabla_{\bm{q}^j} \left(M \nabla_{\bm{q}^i} \left(\frac{\psi}{\zeta(\rho)M} \right) \right),
\end{aligned}
\end{align}
in $\mathcal{O} \times (0,T)$, with $\mathcal{O} \coloneqq \Omega \times D$. In the above equation $\zeta(\cdot) \in \mathbb{R}_{>0}$ is a density-dependent scaled drag coefficient. Let $\partial \bar{D}^j \coloneqq D^1 \times \cdots \times D^{j-1} \times \partial D^j \times D^{j+1} \times \cdots \times D^K$. We impose the following boundary and initial conditions, for all $j = 1,\dots, K$:
\begin{align}
\label{eq16} \left[ \frac{1}{4\lambda} \sum_{i=1}^K A_{ij}M \nabla_{\bm{q}^i} \left( \frac{\psi}{\zeta(\rho) M}\right) - (\nabla_x \bm{v})\bm{q}^j \psi \right] \cdot \frac{\bm{q}^j}{|\bm{q}^j|} &= 0 \qquad \qquad \quad \, \text{on $\Omega \times \partial \bar{D}^j \times (0,T)$}, \\
\label{eq17} \varepsilon \nabla_x \left( \frac{\psi}{\zeta(\rho)} \right) \cdot \bm{n} &=0 \qquad \qquad \quad \, \text{on $\partial \Omega \times D \times (0,T)$}, \\
\psi(x, \bm{q}, 0) &= \psi_0(x,\bm{q}) \qquad \  \text{in $\Omega \times D$},
\label{eq17a}
\end{align}
where $\bm{n}$ is the unit outward normal to $\partial \Omega$. In (\ref{eq7}), the dimensionless constant $k > 0$ is a constant multiple of the product of the Boltzmann constant $k_B$ and the absolute temperature $\kappa$. In (\ref{eq15}), the centre-of-mass diffusion coefficient $\varepsilon > 0$ is defined as $\varepsilon \coloneqq (l_0/L_0)^2/(4(K+1)\lambda)$ with $l_0 \coloneqq \sqrt{k_B \kappa / H}$ signifying the characteristic microscopic length-scale and $\lambda \coloneqq (\zeta_0/4H)(U_0/L_0)$, where $\zeta_0 > 0$ is a characteristic drag coefficient and $H > 0$ is a spring-constant. The dimensionless positive parameter $\lambda$ is called the Deborah number, which characterizes the elastic relaxation property of the fluid. The constant matrix $A = (A_{ij})_{i,j=1}^K$, called the Rouse matrix, is symmetric and positive definite.  Furthermore, we associate with $A$ the linear mapping $\mathbb{A} : \mathbb{R}^{d \times K} \to \mathbb{R}^{d \times K}$ defined, for any $B = (B_i^j)_{i=1,\dots,d}^{j=1,\dots,K} \in \mathbb{R}^{d \times K}$, by $(\mathbb{A}(B))_i^j \coloneqq \sum_{k=1}^K B_i^k A_{kj}$, and let $\mathbb{A}^j: \mathbb{R}^{d \times K} \to \mathbb{R}^d$ be the linear mapping defined by $(\mathbb{A}^j(B))_i \coloneqq (\mathbb{A}(B))_i^j$, for $i = 1,\dots, d$ and $j = 1,\dots, K$. By the positive definiteness of the Rouse matrix $A \in \mathbb{R}^{K \times K}_{sym}$, there exist positive constants $C_1$ and $C_2$ such that 
\begin{equation}\label{eq1.14}
C_1 |B|^2 \leq \mathbb{A}(B) : B \leq C_2 |B|^2 \qquad \forall \, B \in \mathbb{R}^{d \times K}.
\end{equation}
In the subsequent discussion we shall simply take $\varepsilon = 1$ and $\lambda = 1/4$ since none of the results depend on the specific values of these positive parameters.

Before embarking on the proof of our main result we provide a brief literature survey. Unless otherwise stated, the centre-of-mass diffusion term is absent from the model in the cited reference, i.e., $\varepsilon = 0$; also, unless otherwise stated, the density $\rho$ is assumed to be constant, and in all cited references only a simple dumbbell model is considered rather than a bead-spring chain model, i.e., $K=1$. 

In \cite{MR1084958}, Renardy proved a local existence and uniqueness result for a family of Hookean-type dumbbell models. Subsequently, E, Li \& Zhang \cite{MR2073140} and Li, Zhang \& Zhang \cite{MR2059152} revisited the question of local existence of solutions for dumbbell models, while 
Zhang and Zhang \cite{MR2221211} showed a local existence and uniqueness result for regular solutions to FENE-type dumbbell models. All of these papers required high regularity of the initial data. In \cite{MR2188682}, Constantin considered the Navier--Stokes equations coupled to nonlinear Fokker--Planck equations modelling the probability distribution of particles interacting with the fluid, and in \cite{MR2600741} Constantin and Seregin  proved the global regularity of solutions of the incompressible Navier--Stokes--Fokker--Planck system in $\mathbb{R}^2$, in the absence of boundaries.

In \cite{MR2340887}, Lions and Masmoudi proved the global existence of weak solutions for the corotational FENE dumbbell model and the Doi model (also called the rod model) using a propagation-of-compactness argument, i.e., the property that if one takes a sequence of weak solutions, which converges weakly and such that the initial data converge strongly, then the weak limit is also a solution. In \cite{MR2456183}, Masmoudi explored the FENE dumbbell model for a general class of potentials; he proved local existence in Sobolev spaces, global existence if the initial data are close to equilibrium, and global existence in two dimensions for the corotational FENE model.

In \cite{MR2149930}, Barrett, Schwab \& S\"{u}li showed the existence of global-in-time weak solutions to the coupled microscopic-macroscopic bead-spring chain model with constant density $\rho$ and in the absence of the centre-of-mass diffusion term, i.e., with $\varepsilon = 0$. The paper admitted a large class of potentials $U$, including the Hookean dumbbell model and general FENE-type dumbbell models in the general noncorotational case, however the velocity field $\bm{v}$ in the drift-term of the Fokker--Planck equation and the extra stress tensor had to be mollified.

Subsequently, in \cite{MR2819196} and \cite{MR2902849} Barrett and S\"{u}li managed to prove the existence of global-in-time weak solutions to general noncorotational Hookean-type bead-spring chain models and FENE-type bead-spring chain models respectively, with centre-of-mass diffusion $\varepsilon>0$, but without mollification and in the general case of $K \geq 1$ coupled beads in the bead-spring chain. This was achieved by introducing a cut-off parameter $L$, discretizing the resulting model in time, and then passing to the limit as $L \to \infty$ by requiring that the time step $\Delta t = o(L^{-1})$. The papers also provided rigorous proofs, for both FENE-type and Hookean-type models, of the convergence of weak solutions to their respective equilibria: $\bm{v}_\infty =\mathbf{0}$ and $\psi_\infty=M$, as $t \rightarrow \infty$.
A key contribution to the field has been Masmoudi's paper \cite{MR3010381}, which proved global existence of weak solutions to the FENE dumbbell model, in the absence of a centre-of-mass diffusion term.  

All of the papers cited above were concerned with homogeneous fluids (i.e., fluids with constant density). In this paper, we study a model similar to the one in \cite{MR2981267}; there, the existence of global weak solutions to FENE-type bead-spring chain models with variable density $\rho$, density-dependent dynamic viscosity $\mu(\rho)$ and density-dependent drag coefficient $\zeta(\rho)$ was shown in a bounded domain in $\mathbb{R}^d$, $d=2,3$. Here, in contrast with \cite{MR2981267}, we permit dependence of the dynamic viscosity on both the density and the polymer number density, i.e., $\mu=\mu(\rho,\varrho)$; this simple extension of the model considered in \cite{MR2981267} introduces nontrivial technical difficulties. Thus, instead of using a sequence of approximating problems based on time-discretization as in \cite{MR2981267}, here, motivated by the approach in \cite{MR3046297}, we use a Galerkin method to construct a sequence of spatially semi-discrete approximations. This approach shortens and simplifies the proof of existence of weak solutions compared to the proof in  \cite{MR2981267}; and, by admitting the dependence of the dynamic viscosity on both the density and the polymer number density, it also generalises the results of \cite{MR2981267} to a wider and physically more realistic class of models. 

The paper is structured as follows. In the next section, we shall derive a formal energy identity, which is at the heart of our proof. We shall introduce the necessary notation and the relevant function spaces, and we formulate our assumptions on the data. In Section 3, we state the main result; the rest of the paper is devoted to the proof of our main result, which guarantees the existence of large-data global weak solutions to the model under consideration.

In Section 4, we begin our proof by first introducing a truncation of the extra-stress tensor with truncation parameter $\ell$ in Subsection 4.1. In order to preserve the energy identity, we also truncate the probability density function in the drag term in the Fokker--Planck equation; we also truncate the initial condition accordingly. In Subsection 4.2, we perform a spatial Galerkin semidiscretization of the velocity field and the probability density function with parameters $n$ and $m$. Given a sufficiently smooth velocity field, i.e., $\bm{v} \in L^1(0,T; W^{1,1}(\Omega;\mathbb{R}^d))$, the proofs of existence and uniqueness of the weak solution to the transport equation \eqref{eq1} can be found in \cite{MR2986590}, for example. By rewriting the truncated Navier--Stokes equation and the truncated Fokker--Planck equation in non-conservative form, we then use Schauder's fixed point theorem to prove the existence of solutions to our partially Galerkin discretized system in Subsection 4.3. In Subsection 4.4, we derive $n$-independent a priori estimates, which allow us to pass to the limit as $n \to \infty$. In Subsection 4.5, we first prove the boundedness of the sequence of approximate densities $\rho^m$ and the nonnegativity of the Galerkin approximations $\hat{\psi}^m$. Then we derive an $m$-independent a priori estimate. We use a Nikolski\u{\i} norm estimate and the Aubin--Lions Lemma to deduce strong convergence of $(\bm{v}^m)_{m=1}^\infty$. To deduce the strong convergence of $(\hat{\psi}^m)_{m=1}^\infty$, we first apply the Dunford--Pettis Theorem to deduce its weak convergence in $L^1_{loc}(\mathcal{O} \times (0,T))$. Then we use the Div-Curl Lemma to show that $(\hat{\psi}^m)_{m=1}^\infty$ converges almost everywhere in $\mathcal{O} \times (0,T)$, followed by Vitali's Convergence Theorem, which gives the strong convergence of the sequence $(\hat{\psi}^m )_{m=1}^\infty$ in $L^1(0,T; L^1(\mathcal{O}))$ as $m \to \infty$. Finally, in Subsection 4.6, we derive $\ell$-independent estimates and apply similar techniques as in Subsection 4.5 to pass to the limit as $\ell \to \infty$. We also show the weak attainment of the initial conditions.

\section{Energy identity, notational conventions, and assumptions on the data}

We begin by stating a formal energy identity for the system of nonlinear partial differential equations under consideration, which plays a key role in our proof of existence of solutions. We shall then also introduce the necessary notational conventions used throughout the paper, and, motivated by the structure of the formal energy identity, we shall state our assumptions on the data under which the existence result is subsequently proved. 

\subsection{The formal energy identity}
We shall derive a formal energy identity under the assumption that $\bm{v}$, $\rho$ and $\psi$ are sufficiently smooth, and, at least for our purposes in this introductory section, $\rho$ is nonnegative, and $\psi$ and $\zeta$ are positive. By taking the $L^2(\Omega)$ inner product of (\ref{eq1}) with $\frac{1}{2}|\bm{v}|^2$,
and taking the $L^2(\Omega; \mathbb{R}^d)$ inner product of (\ref{eq2}) with $\bm{v}$, we deduce upon partial integration and noting the homogeneous Dirichlet boundary condition on $\bm{v}$ and the divergence-free property of $\bm{v}$ and (\ref{eq7}) that
\begin{equation}\label{eq19}
\begin{split}
\frac{\mathrm{d}}{\,\mathrm{d}t} \int_\Omega \frac{1}{2} \rho |\bm{v}|^2 \,\mathrm{d}x + \int_\Omega \mu(\rho,\varrho) |D(\bm{v})|^2 \,\mathrm{d}x &= \int_\Omega \rho \bm{f} \cdot \bm{v} \,\mathrm{d}x - \int_\Omega \tau : \nabla_x \bm{v} \,\mathrm{d}x \\
&= \int_\Omega \rho \bm{f} \cdot \bm{v} \,\mathrm{d}x - k \sum_{j=1}^K \int_{\Omega \times D}\! \psi\, (U^j)^\prime \!\left( \frac{1}{2} |\bm{q}^j|^2 \right) \bm{q}^j \bm{q}^{j^{\rm T}}\! :\! \nabla_x \bm{v} \,\mathrm{d}\bm{q}\,\mathrm{d}x.
\end{split}
\end{equation}
Next, we multiply the Fokker--Planck equation (\ref{eq15}) by $\log \left(\frac{\psi}{\zeta(\rho)M}\right) + 1$, and integrate over $\Omega \times D$. Using integrations by parts and the boundary conditions (\ref{eq6}), (\ref{eq16}) and (\ref{eq17}) in the second and third term on the left-hand side and the two terms on the right-hand side we get
\begin{multline}\label{eq20}
\frac{\mathrm{d}}{\,\mathrm{d}t} \int_{\Omega \times D} M \zeta(\rho) \mathcal{F}\left( \frac{\psi}{\zeta(\rho)M} \right) \,\mathrm{d}\bm{q}\,\mathrm{d}x - \sum_{j=1}^K \int_{\Omega \times D} \psi (U^j)^\prime \left( \frac{1}{2} |\bm{q}^j|^2 \right) \bm{q}^j \bm{q}^{j^{\rm T}} : \nabla_x \bm{v} \,\mathrm{d}\bm{q}\,\mathrm{d}x \\
+ 4\int_{\Omega \times D} M \left| \nabla_x \sqrt{\frac{\psi}{\zeta(\rho) M}}\right|^2 \,\mathrm{d}\bm{q}\,\mathrm{d}x + 4\int_{\Omega \times D} M \mathbb{A}\left(\nabla_{\bm{q}} \sqrt{\frac{\psi}{\zeta(\rho)M}} \right) \colon \left(\nabla_{\bm{q}} \sqrt{\frac{\psi}{\zeta(\rho)M}} \right)\mathrm{d}\bm{q}\,\mathrm{d}x = 0,
\end{multline}
where $\mathcal{F}(s) = s\log s + 1$ for $s>0$ and $\mathcal{F}(0) \coloneqq \lim_{s \to 0+} \mathcal{F}(s) = 1$. We now multiply (\ref{eq20}) by $k$ and add the resulting identity to (\ref{eq19}) to deduce that
\begin{equation*}
\begin{split}
&\frac{\mathrm{d}}{\,\mathrm{d}t} \int_\Omega \left[  \frac{1}{2} \rho |\bm{v}|^2 + k\int_D M \zeta(\rho) \mathcal{F}\left( \frac{\psi}{\zeta(\rho)M} \right)\mathrm{d}\bm{q} \right] \,\mathrm{d}x +  \int_\Omega \mu(\rho,\varrho) |D(\bm{v})|^2 \,\mathrm{d}x \\
&\quad+ 4k \int_{\Omega \times D} M \left| \nabla_x \sqrt{\frac{\psi}{\zeta(\rho) M}}\right|^2 \,\mathrm{d}\bm{q}\,\mathrm{d}x + 4k \int_{\Omega \times D} M \mathbb{A}\left( \nabla_{\bm{q}} \sqrt{\frac{\psi}{\zeta(\rho)M}} \right) \colon \left( \nabla_{\bm{q}} \sqrt{\frac{\psi}{\zeta(\rho)M}} \right) \,\mathrm{d}\bm{q}\,\mathrm{d}x \\
&\qquad = \int_\Omega \rho \bm{f} \cdot \bm{v} \,\mathrm{d}x,
\end{split}
\end{equation*}
which is the formal energy identity that is essential to our proof of existence of weak solutions to the coupled Navier--Stokes--Fokker--Planck system under consideration.

\subsection{Preliminaries, notational conventions, and assumptions on the data}
First we shall summarise the definitions of Lebesgue spaces, Sobolev spaces and Bochner spaces. Let $O$ be a measurable set in $\mathbb{R}^d$ and $p \in [1,\infty)$. The standard Lebesgue space of $p$-integrable functions is denoted by $(L^p(O), \| \cdot \|_p)$. When $p = \infty$, $(L^\infty(O), \| \cdot \|_{\infty})$ denotes the space of essentially bounded functions. For $s \in \mathbb{N}$, let $(W^{s,p}(O), \| \cdot \|_{W^{s,p}(O)})$ be the standard Sobolev spaces and denote by $|\cdot|_{W^{s,p}(O)}$ the corresponding semi-norm. Since we shall need to work with Maxwellian-weighted spaces, we define, for a nonnegative weight-function $N \in L^\infty_{loc}(O)$,
\begin{align}
\nonumber L^p_N(O) &\coloneqq \{ f \in L^p_{loc}(O) : \int_O N(z) | f(z) |^p \,\mathrm{d}z < \infty \}, \\
\nonumber W^{1,p}_N(O) &\coloneqq \{ f \in W^{1,p}_{loc}(O) : \int_O N(z) (| f(z) |^p + |\nabla_z f(z)|^p) \,\mathrm{d}z < \infty \}.
\end{align}
For any pair of functions $f$, $g$, with $f \in L^p(O)$ and $g \in L^{p^\prime}(O)$, where $1/p + 1/p^\prime = 1$ and $p, p^\prime \in [1, \infty]$, we set
\begin{equation*}
(f,g)_O \coloneqq \int_O f(z) g(z) \, \mathrm{d}z.
\end{equation*}
Note that we set $1^\prime := \infty$ and $\infty^\prime := 1$. We adopt analogous notation for vector-valued and tensor-valued functions. If $O= \Omega$, we omit the subscript $\Omega$ from the inner product $(f,g)_\Omega$ for simplicity. For a general Banach space $(X, \| \cdot \|_X)$, the dual space consisting of all continuous linear functionals on $X$ is denoted by $X^\prime$ and the dual pairing is denoted by $\langle f, g \rangle_X$ if $f \in X^\prime$ and $g \in X$. \par
Let $\Omega \subset \mathbb{R}^d$ be a bounded open Lipschitz domain, with $d = 2,3$. $C(\overline{\Omega})$ denotes the set of all continuous real-valued functions on $\overline{\Omega}$. Let $C^\infty(\Omega)$ be the set of all smooth functions on $\Omega$ and let $C^\infty_0(\Omega)$ be the set of all functions in $C^\infty(\Omega)$ that are compactly supported in $\Omega$. Then we define the following function spaces:
\begin{align}
\nonumber W^{1,p}_{\bm{n}}(\Omega; \mathbb{R}^d) &\coloneqq \overline{ \{ \bm{v} \in C^\infty(\Omega; \mathbb{R}^d): \bm{v} \cdot \bm{n} = 0\ \text{on}\ \partial \Omega \} }^{\| \cdot \|_{W^{1,p}(\Omega)} }, \\
\nonumber W^{1,p}_{\bm{n}, \divergence}(\Omega; \mathbb{R}^d) &\coloneqq \overline{ \{ \bm{v} \in C^\infty(\Omega; \mathbb{R}^d): \bm{v} \cdot \bm{n} = 0\ \text{on}\ \partial \Omega, \divergence \bm{v} = 0\ \text{in}\ \Omega \} }^{\| \cdot \|_{W^{1,p}(\Omega)} }, \\
\nonumber W^{1,p}_{0, \divergence}(\Omega; \mathbb{R}^d) &\coloneqq \overline{ \{ \bm{v} \in C^\infty_0(\Omega; \mathbb{R}^d): \divergence \bm{v} = 0\ \text{in}\ \Omega \} }^{\| \cdot \|_{W^{1,p}(\Omega)} }, \\
\nonumber L^2_{0, \divergence}(\Omega; \mathbb{R}^d) &\coloneqq \overline{W^{1,2}_{\bm{n},\divergence}(\Omega;\mathbb{R}^d)}^{\| \cdot \|_2}.
\end{align}
\par
We denote by $L^p(0,T; X)$ the standard Bochner space of $p$-integrable $X$-valued functions defined on the interval $(0,T)$. If $X$ is separable and reflexive and $p \in (1, \infty)$, then $L^p(0,T; X)$ is separable and reflexive and $(L^p(0,T; X))^\prime = L^{p^\prime}(0,T; X^\prime)$. \par
For later purposes, we recall the well-known Gagliardo--Nirenberg inequality. Let $r \in [2, \infty)$ if $d = 2$, and $r \in [2, 6]$ if $d = 3$ and $\theta = d(\frac{1}{2} - \frac{1}{r})$. Then, there exists a constant $C = C(\Omega, r, d)$ such that, for all $f \in W^{1,2}(\Omega)$:
\begin{equation}\label{eq1.25}
\| f \|_{L^r(\Omega)} \leq C\| f \|^{1- \theta}_{L^2(\Omega)} \| f \|^\theta_{W^{1,2}(\Omega)}.
\end{equation}
We also recall the following version of Korn's inequality: for all $\bm{w} \in W^{1,2}_0(\Omega; \mathbb{R}^d)$, we have
\begin{equation}\label{eq1.26}
\int_\Omega |D(\bm{w})|^2 \, \mathrm{d}x \geq c_0 \| \bm{w} \|^2_{W^{1,2}(\Omega; \mathbb{R}^d)},
\end{equation}
where $c_0 > 0$ is a constant. \par
Next, we need to make a few assumptions. First we introduce the notation:
\begin{equation*}
\hat{\psi} = \frac{\psi}{\zeta(\rho)M}, \quad \hat{\psi_0} = \frac{\psi_0}{\zeta(\rho_0)M}.
\end{equation*}
We assume that $\partial \Omega \in C^{0,1}$. For the Maxwellian $M$ we assume that
\begin{equation}\label{eq22}
M \in C_0(\overline{D}) \cap C^{0,1}_{loc}(D) \cap W^{1,1}_0(D), \quad M \geq 0, \quad M^{-1} \in C_{loc}(D).
\end{equation}
For spring potentials that satisfy \eqref{eq10} and \eqref{eq11} (which is the case for typical FENE-type spring potentials) the properties listed in \eqref{eq22} are automatically hold, because \eqref{eq10} and \eqref{eq11} imply \eqref{eq22}. 

For the initial density $\rho_0$ we assume that
\begin{equation}\label{eq23}
\rho_0 \in [\rho_{\min}, \rho_{\max}], \ \text{with $\rho_{\min}> 0$}.
\end{equation}
For the initial velocity $v_0$ we assume that
\begin{equation}\label{eq-inivel}
\bm{v}_0 \in L^2_{0,\divergence}(\Omega; \mathbb{R}^d).
\end{equation}
For the initial value $\psi_0$ of the probability density function we assume that
\begin{equation}\label{eq25}
\begin{split}
&\psi_0 \geq 0 \quad \text{a.e. on $\Omega \times D$},\quad \mathcal{F}(\hat{\psi}_0) \in L^1_M(\Omega \times D),
\\
0 \leq \varrho_0(x):=& \,\int_D \psi_0(\cdot,\bm{q}) \,\mathrm{d}\bm{q} \leq \varrho_{\max}\quad \text{a.e. on $\Omega$},  \quad \int_{\Omega \times D} \psi_0(x,\bm{q}) \,\mathrm{d}\bm{q}\,\mathrm{d}x = 1.
\end{split}
\end{equation}
We shall further assume that
\begin{equation}\label{eq26}
\mu \in C^1([\rho_{\min}, \rho_{\max}] \times [0,\infty), [\mu_{\min}, \mu_{\max}]), \quad \zeta \in C^1([\rho_{\min}, \rho_{\max}], [\zeta_{\min}, \zeta_{\max}]), \ \text{with $\mu_{\min}, \zeta_{\min}>0$}.
\end{equation}
Finally, we assume that
\begin{equation}\label{finL2}
\bm{f} \in L^2(0,T; L^2(\Omega;\mathbb{R}^d)).
\end{equation}

\section{The main result}
In this subsection we state our main result,  which we shall prove in the next sections.
\begin{thm}\label{thm1}
Let $\Omega \subset \mathbb{R}^d$, $d \in \{ 2,3 \}$, be a bounded open Lipschitz domain. Let $K \in \mathbb{N}$ be arbitrary and let $D^i \subset \mathbb{R}^d$, $i = 1, \dots, K$, be bounded open balls centred at the origin. Suppose that $\bm{f} \in L^2(0,T; L^2(\Omega; \mathbb{R}^d))$. Assume that the map $\mathbb{A}: B \in \mathbb{R}^{d \times K} \mapsto \mathbb{A}(B) \in \mathbb{R}^{d \times K}$ is linear and satisfies (\ref{eq1.14}), the Maxwellian $M : D \to \mathbb{R}$ satisfies (\ref{eq22}), $\mu(\cdot,\cdot)$ and $\zeta(\cdot)$ satisfy (\ref{eq26}), and the initial data $(\rho_0, \bm{v}_0, \psi_0)$ satisfy (\ref{eq23})--(\ref{eq25}). Then, there exist functions $(\rho, \bm{v}, \tau, \psi)$, 
such that $\psi(x,\bm{q},t) =  \zeta(\rho(x,t)) M(\bm{q})  \hat\psi(x,\bm{q},t) $, with
\begin{align*}
\rho &\in L^\infty(\Omega \times (0,T)) \cap C([0,T]; L^p(\Omega)), \ \text{where $p \in [1,\infty)$}, \\
\bm{v} &\in L^\infty(0,T; L^2_{0,\divergence}(\Omega; \mathbb{R}^d)) \cap L^2(0,T; W^{1,2}_{0,\divergence}(\Omega; \mathbb{R}^d)), \\
\tau &\in L^2(0,T; L^2(\Omega; \mathbb{R}^{d \times d})), \\
\hat{\psi} &\in L^\infty(\Omega \times (0,T); L^1_M(D)) \cap L^2(0,T; W^{1,1}_M(\mathcal{O})), \ \hat{\psi} \geq 0 \ \text{a.e. in $\mathcal{O} \times (0,T)$}, 
\end{align*}
where $\mathcal{O}:=\Omega \times D$, and where the triple $(\rho, \bm{v}, \hat{\psi})$ satisfies the following coupled system of nonlinear partial differential equations in weak form: 
\begin{equation}\label{eqdensity}
\int_0^T [ \langle \partial_t \rho, \eta \rangle - (\bm{v}\rho, \nabla_x \eta)] \,\mathrm{d}t = 0,\quad \text{for all $\eta \in L^1(0,T; W^{1, \frac{q}{q-1}}(\Omega))$}, 
\end{equation}
where $q \in (2,\infty)$ when $d=2$ and $q \in [3,6]$ when $d=3$,
\begin{align}\label{eqNS}
\begin{aligned}
&\int_0^T \langle \partial_t (\rho \bm{v}), \bm{w} \rangle \,\mathrm{d}t + \int_0^T [-(\rho \bm{v} \otimes \bm{v}, \nabla_x \bm{w}) + (\mu(\rho,\varrho)D(\bm{v}), \nabla_x \bm{w})] \,\mathrm{d}t \\
&= \int_0^T [-(\tau, \nabla_x \bm{w}) +  (\rho\bm{f}, \bm{w})] \,\mathrm{d}t, \quad \text{for all $\bm{w} \in L^s(0,T; W^{1,s}_{0,\divergence}(\Omega; \mathbb{R}^d))$ \,\,with $s>2$}, 
\end{aligned}
\end{align}
and
\begin{equation}\label{eqFP}
\begin{split}
&\int_0^T \left\langle \partial_t(M \zeta(\rho)\hat{\psi}), \varphi \right\rangle_{\mathcal{O}} -  \left(M \zeta(\rho) \bm{v} \hat{\psi}, \nabla_x \varphi \right)_{\mathcal{O}} -  \left(M \zeta(\rho) \hat{\psi}(\nabla_x \bm{v}) \bm{q}, \nabla_{\bm{q}} \varphi \right)_{\mathcal{O}} \,\mathrm{d}t \\
&\quad  + \int_0^T (M \nabla_x \hat{\psi}, \nabla_x \varphi)_{\mathcal{O}} + \left( M \mathbb{A}(\nabla_{\bm{q}} \hat{\psi}), \nabla_{\bm{q}} \varphi \right)_{\mathcal{O}} \,\mathrm{d}t = 0, \quad  \text{for all $\varphi \in L^\infty(0,T; W^{1,\infty}(\mathcal{O}))$}.
\end{split}
\end{equation}
The polymer number density is given by
\begin{equation}\label{eqPND}
\varrho(x,t) = \zeta(\rho) \int_D M(\bm{q})\hat\psi(x,\bm{q},t)\,\mathrm{d}\bm{q} \quad \text{for a.e. $(x,t) \in \Omega \times (0,T)$}
\end{equation}
and the extra-stress tensor $\tau$ is given by
\begin{equation}\label{eqtau}
\tau(x,t) = k \sum_{j=1}^K \int_D M\zeta(\rho) \nabla_{\bm{q}^j} \hat{\psi}(x,\bm{q},t) \otimes \bm{q}^j \,\mathrm{d}\bm{q} \quad \text{for a.e. $(x,t) \in \Omega \times (0,T)$}.
\end{equation}
The following weak continuity properties hold:
\begin{align}\label{weakccont}
\begin{aligned} 
t &\mapsto \int_\Omega \rho(x,t)\bm{v}(x,t)\cdot \bm{u}\,  \mathrm{d}x \in C([0,T])\quad\mbox{for any $\bm{u} \in W^{1,s}_{0,\divergence}(\Omega;\mathbb{R}^d)$},\\
t &\mapsto \int_{\mathcal{O}}M(\bm{q}) \zeta(\rho(x,t))(t)\hat{\psi}(x,\bm{q},t)\,\phi(x,\bm{q})\,\mathrm{d}x\,\mathrm{d}\bm{q}\in C([0,T]) \quad \mbox{for any $\phi \in W^{1,\infty}(\mathcal{O})$},
\end{aligned}
\end{align}
and the initial data are attained in the following sense: 
\begin{align}\label{eqinicond}
\begin{aligned}
\lim_{t \to 0+} ((\rho\bm{v})(t), \bm{u}) &= (\rho_0 \bm{v}_0, \bm{u}) \qquad \qquad \quad \text{for all $\bm{u} \in W^{1,s}_{0,\divergence}(\Omega; \mathbb{R}^d)$ where $s>2$}, \\
\lim_{t \to 0+} ( M (\zeta(\rho)\hat{\psi})(t), \phi)_{\mathcal{O}} &= (M\zeta(\rho_0) \hat{\psi}_0, \phi )_{\mathcal{O}} \qquad \text{for all $\phi \in W^{1,\infty}(\mathcal{O})$}.
\end{aligned}
\end{align}
Moreover, for a.e. $t \in (0,T)$, the following energy inequality holds:
\begin{equation}\label{energy}
\begin{split}
&k\int_{\mathcal{O}} M \zeta(\rho(\cdot, t)) \mathcal{F}(\hat{\psi}(\cdot,t)) \,\mathrm{d}x\,\mathrm{d}\bm{q} + \frac{1}{2}\int_\Omega \rho(\cdot,t) |\bm{v}(\cdot,t)|^2 \,\mathrm{d}x \\
&\quad + \int_0^t \int_\Omega \mu(\rho,\varrho) |D(\bm{v})|^2 \,\mathrm{d}x\,\mathrm{d}s + 4kC_1\int_0^t \int_{\mathcal{O}} M \left| \nabla_{x,\bm{q}} \sqrt{\hat{\psi}} \right|^2 \,\mathrm{d}x\,\mathrm{d}\bm{q}\,\mathrm{d}s \\
&\leq k\int_{\mathcal{O}} M \zeta(\rho_0) \mathcal{F}(\hat{\psi}_0) \,\mathrm{d}x\,\mathrm{d}\bm{q} + \frac{1}{2}\int_\Omega \rho_0 |\bm{v}_0|^2 \,\mathrm{d}x + \int_0^t (\rho \bm{f}, \bm{v}) \,\mathrm{d}s,
\end{split}
\end{equation}
where $\mathcal{F}(s) = s\log s + 1$ for $s>0$ and $\mathcal{F}(0) \coloneqq \lim_{s \to 0+} \mathcal{F}(s) = 1$.
\end{thm}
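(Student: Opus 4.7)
The plan is to follow the roadmap sketched in the introduction: introduce a truncation parameter $\ell$ that tames the nonlinear coupling through $\tau$ and $\varrho$, carry out a two-level spatial Galerkin discretization at parameters $n$ (for $\bm{v}$) and $m$ (for $\hat\psi$), solve the resulting finite-dimensional problem via Schauder's fixed point theorem, and then remove the approximations in the order $n \to \infty$, $m \to \infty$, $\ell \to \infty$. At each level the formal energy identity from Section 2.1 (or an inequality version of it) supplies the $\ell$-, $m$-, $n$-uniform bounds: the velocity is controlled in $L^\infty_t L^2_x \cap L^2_t W^{1,2}_x$, the relative entropy $\int M\zeta(\rho)\mathcal F(\hat\psi)$ yields an $L^\infty_t L^1_M$ bound on $\hat\psi$, and the Fisher information term gives $\sqrt{\hat\psi} \in L^2_t W^{1,2}_M(\mathcal O)$; the latter, combined with the $L^1_M$ bound, provides $L^2_t W^{1,1}_M(\mathcal O)$ control on $\hat\psi$ itself. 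The density is transported by a divergence-free velocity, so the bounds $\rho_{\min} \le \rho \le \rho_{\max}$ propagate via the DiPerna--Lions theory cited after \eqref{eq17a}.

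To construct the Galerkin iterates I would fix $\ell$ and $m$, pick finite-dimensional bases of $W^{1,2}_{0,\divergence}(\Omega;\mathbb{R}^d)$ and of a suitable subspace of $W^{1,\infty}(\mathcal O)$, and define the fixed-point map by: given $\tilde{\bm v}^n$, solve the continuity equation \eqref{eq1} for $\rho$, then solve the truncated Fokker--Planck Galerkin ODE for $\hat\psi^m$ using the truncated coefficients (so that the drift and drag are bounded), and finally solve the Galerkin Navier--Stokes ODE for $\bm v^n$ using the truncated extra stress $\tau_\ell$ and viscosity $\mu(\rho,\varrho_\ell)$. Continuity of this map on a suitable ball in $C([0,T]; V_n)$ follows from standard ODE estimates together with the fact that $\rho$ depends continuously on $\tilde{\bm v}^n$ in, say, $C([0,T]; L^p(\Omega))$ for any finite $p$; Schauder then delivers a fixed point.

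The passage $n \to \infty$ is relatively soft: the energy estimate gives weak-$\ast$ and weak limits, and because $\hat\psi^m$ lies in a fixed finite-dimensional subspace in $\bm q$, the strong compactness needed to identify nonlinear products in the Fokker--Planck equation reduces to compactness in $x,t$, which follows from Aubin--Lions on $\bm v^n$. The step $m \to \infty$ is the technical heart of the argument. After extracting weak limits of $\bm v^m$ and $\rho^m$ (the latter in any $C([0,T];L^p)$ by DiPerna--Lions renormalization plus $L^\infty$ bounds, giving strong convergence of $\rho^m$ by commutator estimates), strong convergence of $\bm v^m$ in $L^2_t L^2_x$ comes from a Nikolskii estimate on $\rho^m \bm v^m$ combined with Aubin--Lions. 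For $\hat\psi^m$ one only has an $L^1$-type uniform bound, so I would first use Dunford--Pettis (exploiting the uniform entropy bound to get equiintegrability) to get weak $L^1_{\text{loc}}$ convergence, and then apply the Div--Curl Lemma to a carefully chosen pair of vector fields built from $M\zeta(\rho^m)\hat\psi^m$ and $\hat\psi^m$ to extract a.e.\ convergence, finishing with Vitali to upgrade to strong $L^1(0,T;L^1(\mathcal O))$. This is the step I expect to be the main obstacle, because the Div--Curl argument requires a delicate choice of test vector fields whose curl and divergence are controlled in negative-order Sobolev spaces uniformly in $m$, using the Fokker--Planck equation to control $\partial_t(M\zeta(\rho^m)\hat\psi^m)$, and the flux splitting must be chosen so that the coupling with $\nabla_x \bm v$ and $\mathbb A(\nabla_{\bm q}\hat\psi)$ survives the passage to the limit. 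Once strong convergence of $\hat\psi^m$ is in hand, $\varrho^m \to \varrho$ strongly in $L^1$, which allows identification of $\mu(\rho,\varrho)$ in the weak limit of the viscosity term.

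The final passage $\ell \to \infty$ uses the same machinery but with $\ell$-uniform estimates obtained from the energy identity (the truncation is chosen precisely so that the dangerous drift/drag terms cancel in the energy balance). The extra-stress tensor \eqref{eqtau} is identified by integration by parts from \eqref{eq7} using the strong $L^1$ convergence of $\hat\psi^\ell$ and the control of $\sqrt{\hat\psi^\ell}$ in $L^2_t W^{1,2}_M$. The energy inequality \eqref{energy} is obtained by lower semicontinuity of the convex functionals $\int \rho|\bm v|^2$, $\int M\zeta(\rho)\mathcal F(\hat\psi)$, and of the Fisher information; equality becomes inequality because one loses information through the three weak limits. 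Finally, the weak continuity in time \eqref{weakccont} follows from the equations \eqref{eqNS}--\eqref{eqFP} by an Arzelà--Ascoli argument applied to $t\mapsto (\rho\bm v(t),\bm u)$ and $t\mapsto (M\zeta(\rho)\hat\psi(t),\phi)_{\mathcal O}$ using uniform time-modulus estimates, and the initial conditions \eqref{eqinicond} are attained in this weak sense by passing to the limit in the test against characteristic functions of small time intervals.
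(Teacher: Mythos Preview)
Your overall roadmap matches the paper's --- truncation at level $\ell$, a two-stage Galerkin approximation, Schauder's fixed point, then successive limits with Dunford--Pettis, the Div--Curl Lemma and Vitali for the strong convergence of $\hat\psi$. However, you have reversed the roles of the two Galerkin parameters, and this is not a harmless relabelling: it breaks two pillars of the argument.

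In the paper, $m$ indexes the velocity space $V^m=\spn\{\bm{w}_1,\ldots,\bm{w}_m\}$ (and simultaneously the approximate Maxwellian $M^m = \overline{M}^m + 1/m$, which you omit), while $n$ indexes the Fokker--Planck Galerkin space $X^n \subset W^{1,2}_{M^m}(\mathcal O)$ (functions of both $x$ and $\bm q$, not of $\bm q$ alone). The paper passes $n\to\infty$ \emph{first}, so that at the intermediate level $\hat\psi^m$ solves the full weak Fokker--Planck equation in $W^{1,2}(\mathcal O)$ while $\bm{v}^m$ remains a finite sum of $C^1$ basis functions. This order is essential. Nonnegativity of $\hat\psi^m$ is obtained by testing with $[\hat\psi^m]_-$; this test function lies in $W^{1,2}(\mathcal O)$ but is \emph{not} a linear combination of Galerkin basis functions, so the test is admissible only once $\hat\psi$ has left the finite-dimensional space. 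In your order $\hat\psi^m$ is still a Galerkin iterate at the intermediate stage and is generically sign-changing; but without $\hat\psi^m\ge 0$ the entropy test with $\log(\hat\psi^m+\delta)+1$ is meaningless, and the entropy inequality is the \emph{only} source of $m$-uniform bounds on $\hat\psi^m$ --- the quadratic $L^2_{M^m}$ estimates used to pass $n\to\infty$ rely on $M^m\ge 1/m$ and blow up as $m\to\infty$.

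Second, the paper exploits $\bm{v}^m\in W^{1,\infty}(Q)$ to represent $\rho^m$ explicitly via characteristics, yielding $\partial_t\rho^m,\nabla_x\rho^m\in L^\infty(Q)$. This extra regularity is needed to make the renormalized equation $\partial_t\zeta(\rho^m)+\nabla_x\!\cdot(\bm{v}^m\zeta(\rho^m))=0$ hold pointwise a.e., which feeds both the nonnegativity proof and a lengthy step you have left out entirely: the uniform bound $\|\varrho^m\|_{L^\infty(Q)}\le C$. This bound is indispensable, since $\mu=\mu(\rho,\varrho)$ and hypothesis \eqref{eq26} gives no control for unbounded $\varrho$. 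The paper obtains it by testing the equation for $\lambda^m:=\int_D M^m\hat\psi^m\,\mathrm d\bm q$ with $G_\delta'(\omega-\lambda^m)$ for a regularized convex $G_\delta$, and the manipulation relies repeatedly on the pointwise renormalized density equation. In your order, once $\bm v$ is only $L^2_tW^{1,2}_x$, neither the characteristic representation nor this maximum-principle argument is available.
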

We will prove this result by constructing a sequence of approximations to the problem under consideration. We shall
 then pass to the limits in the various approximation paramaters --- first in the dimensions $n$ and $m$ of the Galerkin subspaces, and then in the parameter $\ell$ in the truncation process that we shall next introduce, to deduce the convergence of the sequence of approximations to a global-in-time weak solution of the problem.

\section{Proof of existence}
\subsection{The first level of approximation: truncation}
To approximate our original Navier--Stokes--Fokker--Planck system, we begin by introducing a smooth nonnegative function $\Gamma \in C_0^\infty((-2,2))$, such that $\Gamma(s) = 1$ for all $s \in [-1,1]$ and for an arbitrary $\ell \in \mathbb{N}$  we define $\Gamma_\ell(s) \coloneqq \Gamma(\frac{s}{\ell})$. The primitive function to $\Gamma_\ell$ is the truncation function defined by
\begin{equation*}
T_\ell(s) \coloneqq \int_0^s \Gamma_\ell(r)\, \mathrm{d}r.
\end{equation*}
We define the $\ell$-th approximation of  (\ref{eq1}) as follows:
\begin{equation}\label{eqrhol}
\frac{\partial \rho^{\ell}}{\partial t} + \nabla_x \cdot (\bm{v}^{\ell} \rho^{\ell}) = 0 \qquad  \text{in $Q$},
\end{equation}
subject to the following initial condition:
\begin{equation}\label{eqrholini}
\rho^{\ell}(\cdot, 0) = \rho_0(\cdot) \qquad \text{in $\Omega$}.
\end{equation}
We define the $\ell$-th approximation of (\ref{eq2}) and (\ref{eq3}) as follows:
\begin{align}
\begin{aligned}
\label{eq29} \frac{\partial (\rho^{\ell} \bm{v}^{\ell})}{\partial t} +  \nabla_x \cdot (\rho^{\ell} \bm{v}^{\ell} \otimes \bm{v}^{\ell}) - \nabla_x \cdot (\mu(\rho^{\ell},\varrho^{\ell})D(\bm{v}^{\ell})) + \nabla_x p^{\ell} 
&= \rho^{\ell} \bm{f} + \nabla_x \cdot \tau^{\ell} \qquad \quad  \text{in $Q$}, \\
\nabla_x \cdot \bm{v}^{\ell} &= 0 \qquad \qquad \qquad \quad \quad \  \text{in $Q$},
\end{aligned}
\end{align}
with initial and boundary conditions given by
\begin{align}\label{eqvinibd}
\begin{aligned}
\bm{v}^{\ell}(\cdot, 0) & = \bm{v}_0(\cdot) \qquad \qquad \text{in $\Omega$}, \\
\bm{v}^{\ell} &= \bm{0} \qquad \qquad \quad \, \,  \text{on $\partial \Omega \times (0,T)$}.
\end{aligned}
\end{align}
The $\ell$-th approximation $\varrho^\ell$ of the polymer number density $\varrho$ is defined by 
\begin{align}\label{pnd}
 \varrho^\ell(x,t):= \zeta(\rho^\ell(x,t)) \int_D M(\bm{q}) \hat\psi^\ell(x,\bm{q},t)\,\mathrm{d}\bm{q},
\end{align}
and the $\ell$-th approximation $\tau^\ell$ of the polymeric extra stress tensor $\tau$ is given by
\begin{equation}\label{taul}
\tau^{\ell}(x,t) \coloneqq k \sum_{j=1}^K \int_D M(\bm{q}) \zeta(\rho^{\ell}(x,t)) \nabla_{\bm{q}^j} T_\ell(\hat{\psi}^\ell(x,\bm{q},t)) \otimes \bm{q}^j \,\mathrm{d}\bm{q},
\end{equation}
where $\hat\psi^\ell$ is the solution of the initial-boundary-value problem \eqref{FPL}--\eqref{FPLIC} stated below.

By partial integration we have that
\begin{equation}\label{taulpi}
\tau^{\ell}(x,t) = -k \int_D \left[ KM(\bm{q})\zeta(\rho^{\ell})T_\ell(\hat{\psi}^\ell(x, \bm{q},t))I + \sum_{j=1}^K \zeta(\rho^{\ell}) T_\ell(\hat{\psi}^\ell(x, \bm{q},t))\nabla_{\bm{q}^j} M(\bm{q}) \otimes \bm{q}^j \right] \mathrm{d}\bm{q}
\end{equation}
on observing that the boundary term on $\partial D$ vanishes since $M = 0$ on $\partial D$. We shall also modify the Fokker--Planck equation (\ref{eq15}). We first set 
\begin{equation*}
\Lambda_\ell(s) \coloneqq s\Gamma_\ell(s).
\end{equation*}
The $\ell$-th approximation of (\ref{eq15}) is given by
\begin{align}\label{FPL}
\begin{aligned}
\frac{\partial (M\zeta(\rho^{\ell}) \hat{\psi}^{\ell})}{\partial t} 
+ \nabla_x \cdot (M \zeta(\rho^{\ell}) \hat{\psi}^{\ell} \bm{v}^{\ell})  &+  \divergence_{\bm{q}} (M \zeta(\rho^{\ell}) \Lambda_\ell(\hat{\psi}^{\ell})(\nabla_x \bm{v}^{\ell}) \bm{q}) \\
&- \Delta_x (M\hat{\psi}^{\ell}) - \divergence_{\bm{q}} \mathbb{A}\left(M \nabla_{\bm{q}} \hat{\psi}^{\ell} \right) = 0
\end{aligned}
\end{align}
on $\mathcal{O} \times (0,T)$, where $\mathcal{O} \coloneqq \Omega \times D$, and is supplemented by the following boundary conditions:
\begin{align}
\left[ \mathbb{A}^j(M \nabla_{\bm{q}} \hat{\psi}^{\ell}) - M \zeta(\rho^{\ell}) \Lambda_\ell( \hat{\psi}^{\ell}) (\nabla_x \bm{v}^{\ell})\bm{q}^j \right] \cdot \bm{n}^j &= 0 \qquad \qquad \quad \, \text{on $\Omega \times \partial \bar{D}^j \times (0,T)$}, \label{FPLBC1}\\
M\nabla_x \hat{\psi}^{\ell} \cdot \bm{n} &=0 \qquad \qquad \quad \, \text{on $\partial \Omega \times D \times (0,T)$},\label{FPLBC2}
\end{align}
for all $j=1,\dots, K$. We also truncate the initial condition for $\hat{\psi}^{\ell}$ as follows:
\begin{equation}\label{FPLIC}
\hat{\psi}^{\ell}(x, \bm{q},0) = T_\ell(\hat{\psi}_0(x,\bm{q}))\qquad \mbox{for $(x,\bm{q}) \in \Omega \times D$}.
\end{equation}

For simplicity, we shall omit the superscript $\ell$ temporarily in the following discussions; we shall reinstate it later and will then pass to the limit $\ell \rightarrow \infty$. We need to show first, however, that this approximating problem has a solution $(\rho^\ell, \bm{v}^\ell, \hat\psi^\ell, \varrho^\ell)$ for each $\ell \geq 1$; we shall do so by constructing a two-level Galerkin approximation to it and passing to the limits in the sequences of Galerkin approximations.

\subsection{A two-stage Galerkin approximation}
First, we define an approximate Maxwellian $M^m$ by fixing a sequence of positive functions $(\overline{M}^m)_{m \in \mathbb{N}} \subset C^{0,1}_0(\overline{D})$ such that for each compact set $\kappa \subset D$ the following  holds:
\begin{equation}\label{eq3.14}
\lim_{m \to \infty} \| \overline{M}^m - M \|_{C(\overline{D}) \cap W^{1,1}_0(D)} + \| (\overline{M}^m)^{-1} - M^{-1} \|_{C(\kappa)} = 0.
\end{equation}
Then, the approximate Maxwellian $M^m$ is defined by
\begin{equation*}
M^m \coloneqq \overline{M}^m + \frac{1}{m}, \quad \text{for $m = 1, 2, \dots$}.
\end{equation*}
Let $(\bm{f}^m)_{m = 1}^\infty$ be a sequence of functions in $C([0,T]; L^2(\Omega; \mathbb{R}^d))$ converging to $\bm{f}$ in $L^2(0,T; L^2(\Omega; \mathbb{R}^d))$. Let further $(\rho^m_0)_{m=1}^\infty$ be a sequence of functions in $C^1(\overline\Omega)$ such that $\rho^m_0 \in [\rho_{\min}, \rho_{\max}]$, with $\rho_{\min} >0$, which converges to $\rho_0$ in $L^1(\Omega)$; such a sequence can be constructed by extending $\rho_0$ from $\Omega$ to $\mathbb{R}^d$ by $0$ and convolving the resulting function, still denoted by $\rho_0$,  with $\theta^m$, where $\theta^m(x):= m^d \theta(mx)$, $\theta \in C^\infty_0(\mathbb{R}^d)$, $\theta \geq 0$, and $\int_{\mathbb{R}^d} \theta(x)\,\mathrm{d}x = 1$, and  observing that 
\[ \rho^m_0(x) - \rho_{\min} = \int_{\mathbb{R}^d}(\rho_0(x-y) - \rho_{\min})\, \theta^m(y)\,\mathrm{d}y \geq 0,\]
and
\[ \rho^m_0(x) - \rho_{\max} = \int_{\mathbb{R}^d}(\rho_0(x-y) - \rho_{\max})\, \theta^m(y)\,\mathrm{d}y \leq 0.\]

Next, we shall introduce the Galerkin basis functions for the velocity field and the probability density function respectively. The Hilbert space $W^{1,2}_{0,\divergence} \cap W^{d+1,2}(\Omega; \mathbb{R}^d)$, equipped with the inner product of $W^{d+1,2}(\Omega; \mathbb{R}^d)$, is compactly and densely embedded in the Hilbert space $L^2_{0, \divergence}(\Omega; \mathbb{R}^d)$. Hence, by the Hilbert--Schmidt Theorem, there exists a sequence $(\bm{w}_i)_{i=1}^\infty$ of eigenfunctions in $W^{1,2}_{0,\divergence} \cap W^{d+1,2}(\Omega; \mathbb{R}^d)$ whose linear span is dense in $L^2_{0, \divergence}(\Omega; \mathbb{R}^d)$, such that the $\bm{w}_i$, $i = 1,2,\dots$, are orthogonal in the inner product of $W^{d+1,2}(\Omega; \mathbb{R}^d)$ and orthonormal in the inner product of $L^2(\Omega; \mathbb{R}^d)$. By Sobolev embedding, it follows that $\bm{w}_i \in C^1(\overline\Omega;\mathbb{R}^d)$ for all $i=1,2, \ldots$. Similarly, for each $m \in \mathbb{N}$ we find a sequence $(\varphi_i^m)_{i=1}^\infty$ of eigenfunctions in $W^{(K+1)d+1,2}(\mathcal{O})$ that are orthogonal in $W^{1, 2}_{M^m}(\mathcal{O})$ and orthonormal in $L^2_{M^m}(\mathcal{O})$. As $\mathcal{O} = \Omega \times D \subset \mathbb{R}^{(K+1)d}$, by Sobolev embedding we deduce that $\varphi_i^m \in C^1(\mathcal{O})$.

Finally, we fix $m,n \in \mathbb{N}$ and look for $(\rho^{m,n}, \bm{v}^{m,n}, \hat{\psi}^{m,n})$, where $\bm{v}^{m,n}$ and $\hat{\psi}^{m,n}$ are of the form
\begin{align}\label{eq-vmn}
\bm{v}^{m,n}(x,t) &\coloneqq \sum_{i=1}^m c^{m,n}_i(t) \bm{w}_i(x),\\\label{eq-psimn}
\hat{\psi}^{m,n}(x,\bm{q}, t) &\coloneqq \sum_{i=1}^n d^{m,n}_i(t) \varphi^m_i(x,\bm{q}),
\end{align}
that solve
\begin{align}
\label{eq48} &(\partial_t \rho^{m,n}, \eta) - (\bm{v}^{m,n}\rho^{m,n}, \nabla_x \eta) = 0\quad \text{for all $\eta \in C^{0,1}(\overline\Omega)$ and a.e. $t \in (0,T)$},\\
\begin{split}
\label{eq47} &( \partial_t (\rho^{m,n} \bm{v}^{m,n}), \bm{w}_i) - (\rho^{m,n} \bm{v}^{m,n} \otimes \bm{v}^{m,n}, \nabla_x \bm{w}_i) + (\mu(\rho^{m,n},\varrho^{m,n})D(\bm{v}^{m,n}), \nabla_x \bm{w}_i)\\
&\quad = -(\tau^{m,n}, \nabla_x \bm{w}_i) + (\rho^{m,n}\bm{f}^m, \bm{w}_i) \quad \text{for all $i = 1,\dots,m$ and a.e. $t \in (0,T)$}, 
\end{split}
\\
\begin{split}
\label{eq49} &\left( \partial_t(M^m \zeta(\rho^{m,n})\hat{\psi}^{m,n}), \varphi^m_i \right)_{\mathcal{O}} 
-  \left(M^m \zeta(\rho^{m,n}) \bm{v}^{m,n} \hat{\psi}^{m,n}, \nabla_x \varphi^m_i\right)_{\mathcal{O}} \\
&\quad - \left(M \zeta(\rho^{m,n}) \Lambda_\ell(\hat{\psi}^{m,n})(\nabla_x \bm{v}^{m,n}) \bm{q}, \nabla_{\bm{q}} \varphi^m_i \right)_{\mathcal{O}} + (M^m \nabla_x \hat{\psi}^{m,n}, \nabla_x \varphi^m_i)_{\mathcal{O}} \\
&\quad + \left( M^m \mathbb{A}(\nabla_{\bm{q}} \hat{\psi}^{m,n}), \nabla_{\bm{q}} \varphi^m_i\right)_{\mathcal{O}} = 0 \quad  \text{for all $i=1,\dots,n$ and a.e. $t \in (0,T)$}.
\end{split}
\end{align}
Here $\varrho^{m,n}$ is defined by
\begin{align}
\label{eq49a}
&\varrho^{m,n}(x,t):=\zeta(\rho^{m,n}(x,t)) \int_D M^m(\bm{q})\,[\hat\psi^{m,n}(x,\bm{q},t)]_{+} \,\mathrm{d}\bm{q} \quad \mbox{for a.e. $(x,t) \in Q$},
\end{align}
where, for a real number $s$, $[s]_+:=\max(0,s)$, and the expression $\tau^{m,n}$ is defined as follows:
\begin{equation}\label{eq49.1}
\tau^{m,n} \coloneqq  -k \int_D \left[ K M \zeta(\rho^{m,n})T_\ell(\hat{\psi}^{m,n})I + \sum_{j=1}^K \zeta(\rho^{m,n}) T_\ell(\hat{\psi}^{m,n})\nabla_{\bm{q}^j} M \otimes \bm{q}^j \right] \,\mathrm{d}\bm{q} \quad \text{a.e. in $Q$}.
\end{equation}
The initial data are given by
\begin{alignat}{2}\label{eqlinicond}
\begin{aligned}
\rho^{m,n}(x,0) &= \rho_0^m(x) \qquad &&\mbox{on $\Omega$}, \\
\bm{v}^{m,n}(x,0) &= \bm{v}^m_0(x) \coloneqq \sum_{i=1}^m (\bm{v}_0, \bm{w}_i) \bm{w}_i(x) \qquad &&\mbox{on $\Omega$}, \\
\hat{\psi}^{m,n}(x,\bm{q},0) &= \hat{\psi}^{m,n}_0(x,\bm{q}) \coloneqq \sum_{i=1}^n (T_\ell(\hat{\psi}^m_0), \varphi^m_i)_{\mathcal{O}}\, \varphi^m_i(x, \bm{q}) \qquad &&\mbox{on $\Omega\times D$}, 
\end{aligned}
\end{alignat}
where
\begin{equation}\label{eq1.66}
\hat{\psi}^m_0 \coloneqq \hat{\psi}_0 \frac{M}{M^m}.
\end{equation}
Note that in the third term on the left-hand side of (\ref{eq49}) and in (\ref{eq49.1}) the Maxwellian $M$ has, intentionally, not been replaced by the approximate Maxwellian $M^m$. Note also that we do not perform Galerkin discretizations of the density $\rho$ and of the polymer number density $\varrho$ in the above system (\ref{eq48})--(\ref{eq1.66}), so at this point we have no guarantee that solutions to this system exist. Our aim in the next section is therefore to show that solutions to this partially Galerkin-discretized system do in fact exist. Having done so, we shall pass to the limit $n \rightarrow \infty$, then to the limit $m \rightarrow \infty$, and finally we shall let $\ell \rightarrow \infty$. 

\subsection{Existence of solutions to the partially Galerkin-discretized system}
In this subsection, we will first show that solutions $(\rho^{m,n}, \bm{v}^{m,n}, \hat{\psi}^{m,n})$ exist for the system (\ref{eq48})--(\ref{eq1.66}) using Schauder's fixed-point theorem.  

For any integers $m,n \geq 1$, let $V^m =\spn \{\bm{w}_1, \ldots, \bm{w}_m\}$ and $X^n =\spn \{\varphi^m_1, \ldots, \varphi^m_n \}$ be the finite-dimensional Galerkin approximation spaces under consideration. 
Let $\bm{u}^{m,n} \in C([0,T]; V^m)$ and $\xi^{m,n} \in C([0,T]; X^n)$ be given. First let us consider the following transport problem:
\begin{equation}\label{eq56}
\frac{\partial \rho^{m,n}}{\partial t} + \divergence_x(\rho^{m,n} \bm{u}^{m,n}) = 0,
\end{equation}
subject to the initial condition
\begin{equation}\label{eq56a}
\rho^{m,n}(0) = \rho_{0}^m.
\end{equation}
Since $\bm{u}^{m,n} \in L^1(0,T; W^{1,1}(\Omega;\mathbb{R}^d))$ and $\divergence_x \bm{u}^{m,n} = 0$, one can show, by following the arguments in Chapter VI in \cite{MR2986590}, that there exists a unique solution $\rho^{m,n}$ to the transport problem which satisfies
\begin{equation}\label{eq58}
0<\rho_{\min} \leq \rho^{m,n}  \leq \rho_{\max}
\end{equation}
and, in addition, $\rho^{m,n} \in C([0,T]; L^p(\Omega))$ for $1 \leq p < \infty$. 
We also define
\[ \lambda^{m,n}(x,t):= \zeta(\rho^{m,n}(x,t)) \int_D M^m(\bm{q})\,[\xi^{m,n}(x,\bm{q},t)]_+\, \mathrm{d}\bm{q}.\]

Now that we have built $\rho^{m,n}$ and $\lambda^{m,n}$, we seek $\bm{v}^{m,n} \in C^1([0,T]; V^m)$ and $\hat{\psi}^{m,n} \in C^1([0,T]; X^n)$ satisfying
\begin{align}\label{eq60}
\begin{split}
&\int_\Omega \rho^{m,n} \left( \frac{\partial \bm{v}^{m,n}}{\partial t} + (\bm{u}^{m,n} \cdot \nabla_x)\bm{v}^{m,n} \right) \cdot \bm{w} \,\mathrm{d}x + \int_\Omega \mu(\rho^{m,n},\lambda^{m,n}) D(\bm{v}^{m,n}) : D(\bm{w})\,\mathrm{d}x \\
&\quad= \int_\Omega \rho^{m,n} \bm{f}^m \cdot \bm{w} \,\mathrm{d}x - \int_\Omega \tau^{m,n} : \nabla_x \bm{w} \,\mathrm{d}x,
\end{split}
\\
\label{eq84}
\begin{split}
&\int_{\mathcal{O}} M^m \zeta(\rho^{m,n}) \frac{\partial \hat{\psi}^{m,n}}{\partial t} \varphi \,\mathrm{d}x\,\mathrm{d}\bm{q} + \int_{\mathcal{O}} M^m \zeta(\rho^{m,n}) (\nabla_x \hat{\psi}^{m,n} \cdot \bm{u}^{m,n}) \varphi \,\mathrm{d}x\,\mathrm{d}\bm{q} + \int_{\mathcal{O}} M^m \nabla_x \hat{\psi}^{m,n} \cdot \nabla_x \varphi \,\mathrm{d}x\,\mathrm{d}\bm{q} \\
&\qquad + \int_{\mathcal{O}} M^m \mathbb{A}(\nabla_{\bm{q}} \hat{\psi}^{m,n}) : \nabla_{\bm{q}} \varphi \,\mathrm{d}x\,\mathrm{d}\bm{q} = \int_{\mathcal{O}} M \zeta(\rho^{m,n}) \Lambda_\ell(\xi^{m,n}) (\nabla_x \bm{u}^{m,n}) \bm{q} : \nabla_{\bm{q}} \varphi \,\mathrm{d}x\,\mathrm{d}\bm{q}
\end{split}
\end{align}
for any $\bm{w} \in V^m$ and $\varphi \in X^n$, and $\tau^{m,n}$ is given by
\begin{equation}\label{eq59}
\tau^{m,n} =  -k \int_D \left[ K M \zeta(\rho^{m,n})T_\ell(\xi^{m,n})I + \sum_{j=1}^K \zeta(\rho^{m,n}) T_\ell(\xi^{m,n})\nabla_{\bm{q}^j} M \otimes \bm{q}^j \right] \,\mathrm{d}\bm{q}.
\end{equation}
First we note that since by hypothesis $\zeta$ is a continuous function of its argument and $\rho^{m,n}$ is bounded, $\zeta(\rho^{m,n})$ is bounded above. Thanks to the presence of the truncation function $T_\ell$ we deduce that
\begin{equation}\label{eq62}
|\tau^{m,n}| \leq C(\ell, M, \zeta_{\max}).
\end{equation}
Also note that since we have replaced the unknown advection vector field $\bm{v}^{m,n}$ in the convective term by the known vector field $\bm{u}^{m,n}$, (\ref{eq60}), (\ref{eq84}) is now a system of linear ordinary differential equations. 
\begin{clm}\label{clm2.1}
Problem (\ref{eq60}) has a unique solution $\bm{v}^{m,n} \in C^1([0,T]; V^m)$ subject to the initial condition 
\begin{equation*}
\bm{v}^{m,n}(\cdot, 0) = \bm{v}_{0,m} \coloneqq \sum_{i=1}^m (\bm{v}_0, \bm{w}_i) \bm{w}_i.
\end{equation*}
\end{clm}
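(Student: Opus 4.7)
The plan is to reduce (\ref{eq60}) to a linear first-order ODE system for the coefficient vector $\bm{c}(t):=(c_1^{m,n}(t),\dots,c_m^{m,n}(t))^{\rm T}$ in the expansion $\bm{v}^{m,n}(x,t)=\sum_{i=1}^m c_i^{m,n}(t)\bm{w}_i(x)$, and then apply standard linear ODE theory. Testing (\ref{eq60}) successively against $\bm{w}=\bm{w}_j$ for $j=1,\dots,m$ yields
\begin{equation*}
\mathbb{M}(t)\,\bm{c}^{\,\prime}(t) + \mathbb{N}(t)\,\bm{c}(t) = \bm{F}(t), \qquad \bm{c}(0) = \bigl((\bm{v}_0,\bm{w}_1),\dots,(\bm{v}_0,\bm{w}_m)\bigr)^{\rm T},
\end{equation*}
where $\mathbb{M}(t)_{ji}:=\int_\Omega \rho^{m,n}\,\bm{w}_i\cdot\bm{w}_j\,\mathrm{d}x$, $\mathbb{N}(t)_{ji}:=\int_\Omega \rho^{m,n}(\bm{u}^{m,n}\cdot\nabla_x)\bm{w}_i\cdot\bm{w}_j\,\mathrm{d}x + \int_\Omega \mu(\rho^{m,n},\lambda^{m,n})D(\bm{w}_i):D(\bm{w}_j)\,\mathrm{d}x$, and $\bm{F}(t)_j:=(\rho^{m,n}\bm{f}^m,\bm{w}_j)-(\tau^{m,n},\nabla_x\bm{w}_j)$.

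The key structural observation is that the mass matrix $\mathbb{M}(t)$ is symmetric and uniformly positive definite on $[0,T]$. Indeed, by (\ref{eq58}) we have $\rho^{m,n}\geq \rho_{\min}>0$, and since $\{\bm{w}_1,\dots,\bm{w}_m\}$ is a basis of the finite-dimensional space $V^m$, for any $\bm{\xi}\in\mathbb{R}^m\setminus\{\bm{0}\}$,
\begin{equation*}
\bm{\xi}^{\rm T}\mathbb{M}(t)\bm{\xi} = \int_\Omega \rho^{m,n}\,\Bigl|\sum_{i=1}^m \xi_i\bm{w}_i\Bigr|^2\,\mathrm{d}x \;\geq\; \rho_{\min}\,\Bigl\|\sum_{i=1}^m \xi_i\bm{w}_i\Bigr\|_2^{\,2} \;\geq\; c_{*}\,\rho_{\min}\,|\bm{\xi}|^2,
\end{equation*}
by equivalence of norms on $V^m$, with $c_*=c_*(m)>0$. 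Hence $\mathbb{M}(t)$ is invertible and the system is equivalent to $\bm{c}^{\,\prime}(t)=\mathbb{M}(t)^{-1}\bigl(\bm{F}(t)-\mathbb{N}(t)\bm{c}(t)\bigr)$, a linear ODE system in $\mathbb{R}^m$.

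It remains to verify that $\mathbb{M}$, $\mathbb{N}$ and $\bm{F}$ are continuous on $[0,T]$, for then the Cauchy--Lipschitz theorem for linear systems with continuous coefficients yields a unique global $\bm{c}\in C^1([0,T];\mathbb{R}^m)$, hence $\bm{v}^{m,n}\in C^1([0,T];V^m)$. The transport equation (\ref{eq56})--(\ref{eq56a}), with $\bm{u}^{m,n}\in C([0,T];V^m)\subset C([0,T];C^1(\overline\Omega;\mathbb{R}^d))$, yields via the theory of \cite{MR2986590} a solution $\rho^{m,n}\in C([0,T];L^p(\Omega))$ for every $p\in[1,\infty)$ satisfying (\ref{eq58}); together with the smoothness of the $\bm{w}_i$ and the continuity of $\bm{u}^{m,n}$, this gives continuity of $\mathbb{M}$ and of the convective contribution to $\mathbb{N}$. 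Since $X^n$ is finite-dimensional and $\xi^{m,n}\in C([0,T];X^n)$, while $M^m$ is bounded and $\zeta$ is continuous, the auxiliary function $\lambda^{m,n}$ depends continuously on $t$ with values in $L^\infty(\Omega)$; combined with the continuity of $\rho^{m,n}$ and the fact that $\mu\in C^1([\rho_{\min},\rho_{\max}]\times[0,\infty),[\mu_{\min},\mu_{\max}])$, this delivers continuity of the viscous term in $\mathbb{N}$. Finally, (\ref{eq62}), the continuity of $T_\ell(\xi^{m,n})\zeta(\rho^{m,n})$ in $t$, and the choice $\bm{f}^m\in C([0,T];L^2(\Omega;\mathbb{R}^d))$ yield continuity of $\bm{F}$. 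The only mildly delicate step is thus this continuity bookkeeping --- in particular, verifying continuity of the nonlinear composition $\mu(\rho^{m,n},\lambda^{m,n})$ --- after which the claim is a direct consequence of classical linear ODE theory.
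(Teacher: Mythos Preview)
Your proof is correct and follows essentially the same approach as the paper: reduce to a linear ODE system for the coefficient vector, observe that the mass matrix is invertible (you argue uniform positive definiteness via $\rho^{m,n}\ge\rho_{\min}$; the paper phrases this as a Gram matrix argument), verify continuity of the coefficients, and invoke the Cauchy--Lipschitz theorem. Your continuity bookkeeping is in fact more detailed than what the paper records --- the paper simply asserts continuity of $M(t)$, $A(t)$, $B(t)$ without justification --- though your claim that $\lambda^{m,n}$ is continuous with values in $L^\infty(\Omega)$ is slightly imprecise (one only has $\rho^{m,n}\in C([0,T];L^p(\Omega))$, $p<\infty$), but this is harmless since continuity of the scalar coefficient $\mathbb{N}(t)_{ji}$ follows from dominated convergence using boundedness of $\mu$ and smoothness of the $\bm{w}_i$.
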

\begin{proof}
We shall seek the solution in the form $\bm{v}^{m,n}= \sum_{i=1}^m \alpha^{m,n}_i(t)\bm{w}_i$. Now (\ref{eq60}) can be re-written as 
\begin{equation}\label{eq63}
M(t)\frac{\mathrm{d} \bm{\alpha}}{\,\mathrm{d}t}(t) = A(t) \bm{\alpha}(t) + B(t),
\end{equation}
where $\bm{\alpha}(t) = (\alpha^{m,n}_1(t),\dots, \alpha^{m,n}_m(t))^{\rm T} \in \mathbb{R}^m$ is the unknown vector. In the above equation
\begin{align}\nonumber
(M(t))_{ij} &:= \int_\Omega \rho^{m,n} \bm{w}_i \cdot \bm{w}_j \,\mathrm{d}x, \\\nonumber
(A(t))_{ij} &:= - \left(\int_\Omega \rho^{m,n} (\bm{u}^{m,n} \cdot \nabla_x )\bm{w}_i \cdot \bm{w}_j \,\mathrm{d}x + \int_\Omega \mu(\rho^{m,n},\lambda^{m,n}) D(\bm{w}_i) : D(\bm{w}_j) \,\mathrm{d}x \right), \\\nonumber
(B(t))_j &:= \int_\Omega \rho^{m,n} \bm{f}^m \cdot \bm{w}_j \,\mathrm{d}x - \int_\Omega \tau^{m,n} : \nabla_x \bm{w}_j \,\mathrm{d}x,
\end{align}
where $i,j \in \{1,\dots, m \}$ and $\tau^{m,n}$ is a function of $\xi^{m,n}$, as given in (\ref{eq59}). Note that $M(t)$, $A(t)$ and $B(t)$ are continuous with respect to $t$. Since $M(t)$ is the Gram matrix of the basis $\{ \bm{w}_i \}$ with respect to the inner product
\begin{equation*}
(f,g) \mapsto \langle f,g \rangle_{\rho^{m,n}} \coloneqq \int_\Omega \rho^{m,n}(t,x) f(x) g(x) \,\mathrm{d}x,
\end{equation*}
it follows that $M(t)$ is invertible for all $t \in [0,T]$. By the Cauchy--Lipschitz Theorem the initial-value problem (\ref{eq63}) has a unique global solution. As a consequence, (\ref{eq60}) has a unique solution $\bm{v}^{m,n }\in C^1([0,T]; V^m)$ subject to the initial condition $\bm{v}^{m,n}(0) = \bm{v}_{0,m}$.
\end{proof}
\begin{clm}\label{clm2.1.1}
Problem (\ref{eq84}) has a unique solution $\hat{\psi}^{m,n} \in C^1([0,T]; X^n)$ subject to the initial condition
\begin{equation*}
\hat{\psi}^{m,n}(\cdot, \cdot, 0) = \hat{\psi}_{0,n} \coloneqq \sum_{i=1}^n (T_\ell(\hat{\psi}_0^m), \varphi^m_i)_{\mathcal{O}}\, \varphi_i^m.
\end{equation*}
\end{clm}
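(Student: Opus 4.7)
The plan is to mirror the argument used in Claim~\ref{clm2.1}: we reduce~\eqref{eq84} to a linear system of ordinary differential equations in the coefficients of $\hat{\psi}^{m,n}$ with respect to the basis $\{\varphi_i^m\}_{i=1}^n$, and then invoke the Cauchy--Lipschitz theorem. Writing
\begin{equation*}
\hat{\psi}^{m,n}(x,\bm{q},t) = \sum_{i=1}^n \beta_i^{m,n}(t)\, \varphi_i^m(x,\bm{q}),
\end{equation*}
substituting into~\eqref{eq84} and taking $\varphi = \varphi_j^m$ for $j = 1,\dots,n$, we obtain a system of the form
\begin{equation*}
\mathcal{M}(t)\, \frac{\mathrm{d}\bm{\beta}}{\mathrm{d}t}(t) = \mathcal{A}(t)\, \bm{\beta}(t) + \mathcal{B}(t),
\end{equation*}
where $\bm{\beta}(t) = (\beta_1^{m,n}(t),\dots,\beta_n^{m,n}(t))^{\rm T}$, and the entries are
\begin{align*}
(\mathcal{M}(t))_{ij} &:= \int_{\mathcal{O}} M^m \zeta(\rho^{m,n})\, \varphi_i^m\, \varphi_j^m \,\mathrm{d}x\,\mathrm{d}\bm{q}, \\
(\mathcal{A}(t))_{ij} &:= -\int_{\mathcal{O}} M^m \zeta(\rho^{m,n}) (\bm{u}^{m,n} \cdot \nabla_x \varphi_i^m)\, \varphi_j^m \,\mathrm{d}x\,\mathrm{d}\bm{q} \\
&\qquad - \int_{\mathcal{O}} M^m \nabla_x \varphi_i^m \cdot \nabla_x \varphi_j^m \,\mathrm{d}x\,\mathrm{d}\bm{q} - \int_{\mathcal{O}} M^m \mathbb{A}(\nabla_{\bm{q}} \varphi_i^m) : \nabla_{\bm{q}} \varphi_j^m \,\mathrm{d}x\,\mathrm{d}\bm{q}, \\
(\mathcal{B}(t))_j &:= \int_{\mathcal{O}} M \zeta(\rho^{m,n}) \Lambda_\ell(\xi^{m,n})\, (\nabla_x \bm{u}^{m,n}) \bm{q} : \nabla_{\bm{q}} \varphi_j^m \,\mathrm{d}x\,\mathrm{d}\bm{q}.
\end{align*}

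Next, I would verify that $\mathcal{M}, \mathcal{A} \in C([0,T]; \mathbb{R}^{n\times n})$ and $\mathcal{B} \in C([0,T]; \mathbb{R}^n)$. Continuity of $\mathcal{A}$ and $\mathcal{B}$ in $t$ follows from $\bm{u}^{m,n}, \xi^{m,n} \in C([0,T];\cdot)$, the continuity of $\rho^{m,n}$ with respect to $t$ in $L^p(\Omega)$ (from the transport result quoted around~\eqref{eq58}), the continuity of $\zeta$ assumed in~\eqref{eq26}, the boundedness of $\Lambda_\ell$ ensured by the truncation, and the fact that each $\varphi_i^m \in C^1(\overline{\mathcal{O}})$ while $M, M^m \in C_0(\overline{D}) \cap C^{0,1}_{\mathrm{loc}}(D)$. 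For the mass matrix $\mathcal{M}(t)$, since $M^m \geq \tfrac{1}{m} > 0$ on $\overline{D}$ and $\zeta(\rho^{m,n}) \geq \zeta_{\min} > 0$ by~\eqref{eq26} and~\eqref{eq58}, the bilinear form
\begin{equation*}
(\phi,\chi) \mapsto \int_{\mathcal{O}} M^m(\bm{q})\, \zeta(\rho^{m,n}(x,t))\, \phi(x,\bm{q})\, \chi(x,\bm{q}) \,\mathrm{d}x\,\mathrm{d}\bm{q}
\end{equation*}
is an inner product on $X^n$ for every $t \in [0,T]$. Hence $\mathcal{M}(t)$ is the Gram matrix of the linearly independent family $\{\varphi_i^m\}_{i=1}^n$ with respect to this inner product, so $\mathcal{M}(t)$ is symmetric and positive definite, in particular invertible, for every $t \in [0,T]$. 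Since $\rho^{m,n}$ is continuous in $t$ in $L^p(\Omega)$ and $\zeta$ is continuous, $t \mapsto \mathcal{M}(t)^{-1}$ is also continuous.

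With $\mathcal{M}(t)^{-1}$ available, the system rewrites as $\dot{\bm{\beta}} = \mathcal{M}(t)^{-1}(\mathcal{A}(t)\bm{\beta} + \mathcal{B}(t))$ with continuous coefficients, a linear (nonautonomous) ODE to which the Cauchy--Lipschitz theorem applies, yielding a unique global solution $\bm{\beta} \in C^1([0,T];\mathbb{R}^n)$ for the initial datum determined by the prescribed initial condition; this initial datum is $\bm{\beta}(0) = ((T_\ell(\hat{\psi}_0^m), \varphi_1^m)_{\mathcal{O}}, \dots, (T_\ell(\hat{\psi}_0^m), \varphi_n^m)_{\mathcal{O}})^{\rm T}$ by virtue of the $L^2_{M^m}$-orthonormality of $\{\varphi_i^m\}$. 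Transporting this back to $\hat{\psi}^{m,n} = \sum_i \beta_i^{m,n}\varphi_i^m$ gives the claimed unique solution in $C^1([0,T];X^n)$. No single step here is genuinely hard; the only point to watch is the uniform positive definiteness of $\mathcal{M}(t)$, which is why the approximate Maxwellian $M^m$ was shifted by $1/m$ and why $\zeta$ was assumed bounded away from zero in~\eqref{eq26}.
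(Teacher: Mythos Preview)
Your proposal is correct and follows exactly the approach the paper takes: the paper's own proof simply says that \eqref{eq84} is a linear ODE system in the coefficients $\beta_i^{m,n}$ and that the detailed argument is the same as in Claim~\ref{clm2.1}, which is precisely what you have written out. One small remark: the identification of $\bm{\beta}(0)$ does not actually require the $L^2_{M^m}$-orthonormality of $\{\varphi_i^m\}$, since $\hat{\psi}_{0,n}$ is already \emph{defined} as that particular linear combination, so the coefficients are read off directly by linear independence.
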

\begin{proof}
Note that (\ref{eq84}) is a system of linear ordinary differential equations. Thus, by writing $\hat{\psi}^{m,n} = \sum_{i=1}^n \beta^{m,n}_i(t)\varphi^m_i$, we deduce that (\ref{eq84}) has a unique solution $\hat{\psi}^{m,n} \in C^1([0,T]; X^n)$ subject to the initial condition $\hat{\psi}^{m,n}(0) = \hat{\psi}_{0,n}$. The detailed argument is similar to the proof of Claim \ref{clm2.1}.
\end{proof}
Let $\|\cdot\|_{V^m}$ and  $\|\cdot \|_{X^n}$ be norms on $V^m$ and $X^n$, respectively. Since $V^m$ and $X^n$ are finite-dimensional linear spaces, and all norms on finite-dimensional linear spaces are equivalent, the precise choice of these norms is of no relevance in the discussion that follows. 
\begin{clm}\label{clm2.2}
Let 
\begin{equation*}
\mathcal{K} \coloneqq \left\{ \bm{v} \in C^1([0,T]; V^m); \sup_{t \in [0,T]} \| \bm{v}(t) \|_2 \leq C, \quad \sup_{t \in [0,T]} \left\| \frac{\partial \bm{v}}{\partial t} \right\|_{V^m} \leq C(m,n) \right\} \subset C([0,T]; V^m)
\end{equation*}
and
\begin{equation*}
\mathcal{S} \coloneqq \left\{ \hat{\psi} \in C^1([0,T]; X^n); \sup_{t \in [0,T]} \| \hat{\psi}(t) \|_{L^2_{M^m}(\mathcal{O})} \leq C, \quad \sup_{t \in [0,T]} \left\| \frac{\partial \hat{\psi}}{\partial t} \right\|_{X^n} \leq C(m,n) \right\} \subset C([0,T]; X^n).
\end{equation*}
Let $\Theta: \overline{\mathcal{K} \times \mathcal{S}} \to \overline{\mathcal{K} \times \mathcal{S}}$ denote the map that takes the pair $(\bm{u}^{m,n}, \xi^{m,n})$ to $(\bm{v}^{m,n}, \hat{\psi}^{m,n}) =: \Theta(\bm{u}^{m,n}, \xi^{m,n})$ via the procedure (\ref{eq60}) and (\ref{eq84}); then, for $C$ and $C(m,n)$ sufficiently large, the mapping $\Theta$ has a fixed point in $\overline{\mathcal{K} \times \mathcal{S}}$.
\end{clm}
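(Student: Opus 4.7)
My plan is to apply Schauder's fixed-point theorem to the mapping $\Theta$ on the convex set $\overline{\mathcal{K}\times\mathcal{S}}$, regarded as a subset of the Banach space $C([0,T]; V^m)\times C([0,T]; X^n)$. Since $V^m$ and $X^n$ are finite-dimensional, the uniform pointwise bounds and uniform time-derivative bounds that define $\mathcal{K}\times\mathcal{S}$ imply, via the Arzel\`a--Ascoli theorem, that the closure is compact in $C([0,T]; V^m)\times C([0,T]; X^n)$, and convexity is immediate because the defining constraints are uniform norm bounds. Claims \ref{clm2.1} and \ref{clm2.1.1} already guarantee that $\Theta$ is well defined, so the remaining tasks are to verify that $\Theta$ maps $\overline{\mathcal{K}\times\mathcal{S}}$ into itself for suitable $C$ and $C(m,n)$, and that $\Theta$ is continuous.

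For the self-map property I need $L^2$-type energy estimates for $(\bm{v}^{m,n},\hat{\psi}^{m,n})$ that do not depend on the input $(\bm{u}^{m,n},\xi^{m,n})$. Testing (\ref{eq60}) with $\bm{w}=\bm{v}^{m,n}$ and using the transport equation (\ref{eq56}) recasts the time-derivative and convective terms as $\tfrac{1}{2}\tfrac{\mathrm{d}}{\mathrm{d}t}\int_\Omega \rho^{m,n}|\bm{v}^{m,n}|^2\,\mathrm{d}x$; Korn's inequality (\ref{eq1.26}), the uniform bound (\ref{eq62}) on $\tau^{m,n}$, the positivity bound (\ref{eq58}) on $\rho^{m,n}$, Young's inequality, and Gr\"onwall's lemma then yield $\sup_{t \in [0,T]}\|\bm{v}^{m,n}(t)\|_2\leq C_1$, with $C_1$ depending only on $\ell$ and the data. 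Testing (\ref{eq84}) with $\varphi=\hat{\psi}^{m,n}$ is slightly more delicate: expanding the time derivative of $\int M^m\zeta(\rho^{m,n})|\hat{\psi}^{m,n}|^2$ and integrating the convective term by parts (using $\nabla_x\cdot \bm{u}^{m,n}=0$ and $\bm{u}^{m,n}\cdot \bm{n}=0$ on $\partial\Omega$) produces two contributions of the form $\tfrac{1}{2}\int M^m\zeta'(\rho^{m,n})\bm{u}^{m,n}\cdot\nabla_x\rho^{m,n}|\hat{\psi}^{m,n}|^2$ of opposite sign, which cancel exactly because $\rho^{m,n}$ satisfies (\ref{eq56}). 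The residual identity is coercive in $\nabla_{x,\bm{q}}\hat{\psi}^{m,n}$; since $|\Lambda_\ell|\leq 2\ell$ and $\|\nabla_x \bm{u}^{m,n}(t)\|_\infty$ is controlled by $\|\bm{u}^{m,n}(t)\|_2\leq C$ via norm equivalence on the finite-dimensional space $V^m$, Young's inequality and Gr\"onwall give $\sup_t \|\hat{\psi}^{m,n}(t)\|_{L^2_{M^m}}\leq C_2$, again independent of the input. Bounds on the time derivatives then follow directly from the Galerkin ODE systems: the relevant Gram matrices are invertible because $\rho^{m,n}\geq \rho_{\min}>0$ and $M^m\zeta(\rho^{m,n})\geq \zeta_{\min}/m>0$, and all coefficients and right-hand sides are uniformly bounded, so $\sup_t\|\partial_t\bm{v}^{m,n}\|_{V^m}+\sup_t\|\partial_t\hat{\psi}^{m,n}\|_{X^n}\leq C(m,n,\ell)$. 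Choosing $C$ and $C(m,n)$ at least as large as these constants secures $\Theta(\overline{\mathcal{K}\times\mathcal{S}})\subset \mathcal{K}\times\mathcal{S}$.

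For the continuity of $\Theta$, suppose $(\bm{u}_k,\xi_k)\to(\bm{u},\xi)$ in $C([0,T]; V^m)\times C([0,T]; X^n)$. Norm equivalence on the finite-dimensional spaces promotes this convergence to convergence in $C([0,T]; C^1(\overline{\Omega};\mathbb{R}^d))\times C([0,T]; C^1(\overline{\mathcal{O}}))$, so the classical theory of characteristics (or the framework of \cite{MR2986590}) yields $\rho^{m,n}_k\to \rho^{m,n}$ uniformly, together with convergence of $\zeta(\rho^{m,n}_k)$, $\mu(\rho^{m,n}_k,\lambda^{m,n}_k)$, $T_\ell(\xi_k)$, and $\Lambda_\ell(\xi_k)$, and hence of all coefficients and source terms in (\ref{eq60})--(\ref{eq84}). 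A standard Gr\"onwall argument applied to the difference of two solutions of each linear ODE system then yields $\Theta(\bm{u}_k,\xi_k)\to \Theta(\bm{u},\xi)$ in $C([0,T]; V^m)\times C([0,T]; X^n)$, and Schauder's fixed-point theorem delivers the desired fixed point. The step I expect to require the most care is the $L^2$-estimate for $\hat{\psi}^{m,n}$: the cancellation of the two $\zeta'(\rho^{m,n})$ contributions is what makes the bound independent of the input $\xi^{m,n}$, and it relies crucially on $\bm{u}^{m,n}$ being divergence-free with vanishing normal trace, together with the fact that $\rho^{m,n}$ solves the transport equation driven by $\bm{u}^{m,n}$.
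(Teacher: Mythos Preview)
Your proposal is correct and follows essentially the same route as the paper: Schauder on a compact convex set, with the self-map property established via the same energy estimates (testing \eqref{eq60} with $\bm{v}^{m,n}$ and \eqref{eq84} with $\hat{\psi}^{m,n}$, using the transport equation \eqref{eq56} for the cancellation you describe). The paper phrases the cancellation through the renormalization identity \eqref{eq85} tested with $\phi = M^m |\hat{\psi}^{m,n}|^2$, whereas you use the equivalent pointwise form $\partial_t\rho^{m,n} = -\bm{u}^{m,n}\cdot\nabla_x\rho^{m,n}$; both are valid here since $\rho_0^m\in C^1(\overline{\Omega})$ and $\bm{u}^{m,n}\in C([0,T];C^1(\overline{\Omega};\mathbb{R}^d))$.

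The only genuine difference is in the continuity argument. The paper does not run a Gr\"onwall estimate on the difference of two solutions; instead it uses the compactness of $\mathcal{K}$ and $\mathcal{S}$ to extract a convergent subsequence of the outputs $(\bm{v}^{m,n}_{(r)},\hat{\psi}^{m,n}_{(r)})$, passes to the limit in the weak formulations using the convergence of coefficients (via Theorem~VI.1.9 of \cite{MR2986590} for $\rho^{m,n}_{(r)}\to\rho^{m,n}$), and then invokes the uniqueness established in Claims~\ref{clm2.1} and~\ref{clm2.1.1} to identify the limit. Your direct stability argument is a legitimate alternative and is arguably cleaner, since it avoids the subsequence extraction; the paper's approach has the advantage of reusing the uniqueness results already in hand and not requiring any new estimate.
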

\begin{proof}
To show that $\Theta$ has a fixed point we apply Schauder's fixed-point theorem. First we note that $\overline{\mathcal{K} \times \mathcal{S}}$ is obviously non-empty and it is easy to show that $\overline{\mathcal{K} \times \mathcal{S}}$ is convex. Then, it remains to show that: (i) $\Theta$ maps $\mathcal{K} \times \mathcal{S}$ into itself; (ii) $\mathcal{K} \times \mathcal{S}$ is relatively compact in $C([0,T]; V^m) \times C([0,T]; X^n)$; then $\overline{\mathcal{K} \times \mathcal{S}}$ is compact in $C([0,T]; V^m) \times C([0,T]; X^n)$; (iii) $\Theta: \overline{\mathcal{K} \times \mathcal{S}} \to \overline{\mathcal{K} \times \mathcal{S}}$ is continuous. \par
We start by showing suitable energy estimates. We observe that, for any $\bm{w} \in C([0,T]; V^m)$, we can take $\bm{w}(s)$ as a test function in (\ref{eq60}) and integrate with respect to time over $[0,t]$, where $t \in (0,T]$. We obtain
\begin{multline}\label{eq69}
\int_0^t \int_\Omega \rho^{m,n} \left( \frac{\partial \bm{v}^{m,n}}{\partial t} + (\bm{u}^{m,n} \cdot \nabla_x)\bm{v}^{m,n} \right) \cdot \bm{w} \,\mathrm{d}x \,\mathrm{d}s +\int_0^t \int_\Omega \mu(\rho^{m,n},\lambda^{m,n}) D(\bm{v}^{m,n}) : D(\bm{w})\,\mathrm{d}x \,\mathrm{d}s \\
= \int_0^t\int_\Omega \rho^{m,n} \bm{f}^m \cdot \bm{w} \,\mathrm{d}x \,\mathrm{d}s- \int_0^t \int_\Omega \tau^{m,n} : \nabla_x \bm{w} \,\mathrm{d}x\,\mathrm{d}s.
\end{multline}
Similarly, for any $\varphi \in C([0,T]; X^n)$, we can take $\varphi(s)$ as a test function in (\ref{eq84}) and integrate with respect to time over $[0,t]$, where $t \in (0,T]$. We obtain
\begin{equation}\label{eq69.1}
\begin{split}
&\int_0^t \int_{\mathcal{O}} M^m \zeta(\rho^{m,n}) \frac{\partial \hat{\psi}^{m,n}}{\partial t} \varphi \,\mathrm{d}x\,\mathrm{d}\bm{q} \,\mathrm{d}s + \int_0^t \int_{\mathcal{O}} M^m \zeta(\rho^{m,n}) (\nabla_x \hat{\psi}^{m,n} \cdot \bm{u}^{m,n}) \varphi \,\mathrm{d}x\,\mathrm{d}\bm{q} \,\mathrm{d}s \\
&\quad + \int_0^t \int_{\mathcal{O}} M^m \nabla_x \hat{\psi}^{m,n} \cdot \nabla_x \varphi \,\mathrm{d}x\,\mathrm{d}\bm{q}\,\mathrm{d}s +\int_0^t \int_{\mathcal{O}} M^m \mathbb{A}(\nabla_{\bm{q}} \hat{\psi}^{m,n}) : \nabla_{\bm{q}} \varphi \,\mathrm{d}x\,\mathrm{d}\bm{q}\,\mathrm{d}s \\
&= \int_0^t \int_{\mathcal{O}} M \zeta(\rho^{m,n}) \Lambda_\ell(\xi^{m,n}) (\nabla_x \bm{u}^{m,n}) \bm{q} : \nabla_{\bm{q}} \varphi \,\mathrm{d}x\,\mathrm{d}\bm{q}\,\mathrm{d}s.
\end{split}
\end{equation}
Taking the test function $\bm{w} = \bm{v}^{m,n}$ in (\ref{eq69}) we deduce that
\begin{equation}\label{eq70}
\begin{split}
&\frac{1}{2} \int_\Omega \rho^{m,n}(t) |\bm{v}^{m,n}(t)|^2 \,\mathrm{d}x - \frac{1}{2} \int_0^t \int_\Omega \frac{\partial \rho^{m,n}}{\partial t} |\bm{v}^{m,n}|^2 \,\mathrm{d}x \,\mathrm{d}s + \frac{1}{2} \int_0^t \int_\Omega \rho^{m,n} \bm{u}^{m,n} \cdot \nabla_x (|\bm{v}^{m,n}|^2) \,\mathrm{d}x\,\mathrm{d}s \\
&\quad + \int_0^t \int_\Omega \mu(\rho^{m,n},\lambda^{m,n}) |D(\bm{v}^{m,n})|^2\,\mathrm{d}x \,\mathrm{d}s \\
&= \frac{1}{2} \int_\Omega \rho_{0}^m |\bm{v}_{0,m}|^2 \,\mathrm{d}x + \int_0^t\int_\Omega \rho^{m,n} \bm{f}^m \cdot \bm{v}^{m,n} \,\mathrm{d}x \,\mathrm{d}s - \int_0^t \int_\Omega \tau^{m,n} : \nabla_x \bm{v}^{m,n} \,\mathrm{d}x\,\mathrm{d}s.
\end{split}
\end{equation}
Since $\bm{v}^{m,n} \in C^1([0,T]; V^m)$, we can test the transport equation (\ref{eq56}) with $|\bm{v}^{m,n}|^2$ and we see that the second and third term in the above identity add up to $0$. From the bounds (\ref{eq58}) and (\ref{eq62}), the assumption (\ref{eq26}), Young's inequality and Korn's inequality (\ref{eq1.26}), we deduce from (\ref{eq70}) that
\begin{equation}\label{eq2.70}
\begin{split}
\int_\Omega \rho^{m,n}(t) |\bm{v}^{m,n}(t)|^2 \,\mathrm{d}x + \mu_{\min} \int_0^t \int_\Omega |D(\bm{v}^{m,n})|^2 \,\mathrm{d}x\,\mathrm{d}s &\leq \rho_{\max}\int_\Omega |\bm{v}_{0,m}|^2 \,\mathrm{d}x + \rho_{\max} \int_0^t \int_\Omega |\bm{f}^m |^2 \,\mathrm{d}x\,\mathrm{d}s \\
&\quad + \rho_{\max} \int_0^t\int_\Omega |\bm{v}^{m,n} |^2 \,\mathrm{d}x\,\mathrm{d}s \\
&\quad + \frac{1}{\mu_{\min}c_0} \int_0^t \int_\Omega |\tau^{m,n}|^2 \,\mathrm{d}x\,\mathrm{d}s \\
&\leq C + C\int_0^t\int_\Omega  |\bm{v}^{m,n}|^2 \,\mathrm{d}x\,\mathrm{d}s,
\end{split}
\end{equation}
where $C$ is a constant depending on the data $\ell, \bm{f}, \bm{v}_0, \mu_{\min}, M, \zeta_{\max}$. On noting that $\rho^{m,n} \geq \rho_{\min}>0$, we have in particular that
\begin{equation*}
\int_\Omega |\bm{v}^{m,n}(t)|^2 \,\mathrm{d}x \leq C+C \int_0^t \int_\Omega |\bm{v}^{m,n}|^2 \,\mathrm{d}x\,\mathrm{d}s.
\end{equation*}
By Gronwall's inequality we have that
\begin{equation}\label{eq73}
\sup_{t \in [0,T]} \| \bm{v}^{m,n}(t) \|_2^2 + \mu_{\min} \int_0^T \int_\Omega |D(\bm{v}^{m,n})|^2 \,\mathrm{d}x\,\mathrm{d}t \leq C.
\end{equation}
Similarly, by taking the test function $\varphi = \hat{\psi}^{m,n}$ in (\ref{eq69.1}) we get
\begin{multline}\label{eq88}
\hspace{-3mm}\frac{1}{2} \int_0^t \int_{\mathcal{O}} M^m \zeta(\rho^{m,n}) \left( \frac{\partial (\hat{\psi}^{m,n})^2}{\partial t} + \bm{u}^{m,n} \cdot \nabla_x (\hat{\psi}^{m,n})^2 \right) \,\mathrm{d}x\,\mathrm{d}\bm{q}\,\mathrm{d}s + \int_0^t \int_{\mathcal{O}} M^m |\nabla_x \hat{\psi}^{m,n}|^2 \,\mathrm{d}x\,\mathrm{d}\bm{q}\,\mathrm{d}s \\
+ \int_0^t \int_{\mathcal{O}} M^m \mathbb{A}(\nabla_{\bm{q}} \hat{\psi}^{m,n}) : \nabla_{\bm{q}} \hat{\psi}^{m,n} \,\mathrm{d}x\,\mathrm{d}\bm{q}\,\mathrm{d}s \\
= \int_0^t \int_{\mathcal{O}} M \zeta(\rho^{m,n}) \Lambda_\ell(\xi^{m,n}) (\nabla_x \bm{u}^{m,n}) \bm{q} : \nabla_{\bm{q}} \hat{\psi}^{m,n} \,\mathrm{d}x\,\mathrm{d}\bm{q}\,\mathrm{d}s.
\end{multline}
Since $\zeta$ is a $C^1$ function of the density, by the renormalization property we have that 
\begin{equation}\label{eq85}
\int_0^t \int_{\mathcal{O}} \zeta(\rho^{m,n}) \left(\frac{\partial \phi}{\partial t} + \bm{u}^{m,n} \cdot \nabla_x \phi \right)\mathrm{d}x\,\mathrm{d}\bm{q}\,\mathrm{d}s - \int_{\mathcal{O}} \zeta(\rho^{m,n}(t))\phi(t) \,\mathrm{d}x\,\mathrm{d}\bm{q} + \int_\mathcal{O} \zeta(\rho_{0}^m) \phi(0) \,\mathrm{d}x\,\mathrm{d}\bm{q} = 0
\end{equation}
for any $\phi \in C^{0,1}([0,T] \times \overline{\mathcal{O}})$. As $\hat\phi^{m,n}\in
C^1([0,T]; W^{(K+1)d+1,2}(\mathcal{O}))\hookrightarrow
C^1([0,T]; C^1(\overline{\mathcal{O}}))$, 
it follows that $|\hat{\psi}^{m,n}|^2 \in C^1([0,T]; C^1(\overline{\mathcal{O}}))$. Thanks to the assumed smoothness of $M$ (and thereby also of $M^m$), we can take the test function $\phi = M^m |\hat{\psi}^{m,n}|^2$ in (\ref{eq85}) to get that
\begin{multline}\label{eq89}
\int_0^t \int_{\mathcal{O}} M^m \zeta(\rho^{m,n}) \left( \frac{\partial (\hat{\psi}^{m,n})^2}{\partial t} + \bm{u}^{m,n} \cdot \nabla_x (\hat{\psi}^{m,n})^2 \right) \,\mathrm{d}x\,\mathrm{d}\bm{q}\,\mathrm{d}s \\
= \int_{\mathcal{O}} M^m \zeta(\rho^{m,n}(t)) |\hat{\psi}^{m,n}(t)|^2 \,\mathrm{d}x\,\mathrm{d}\bm{q} - \int_{\mathcal{O}} M^m \zeta(\rho_{0}^m) |\hat{\psi}_{0,n}|^2 \,\mathrm{d}x\,\mathrm{d}\bm{q}.
\end{multline}
By multiplying (\ref{eq89}) by $1/2$ and subtracting the resulting equation from (\ref{eq88}) we obtain
\begin{equation*}
\begin{split}
&\frac{1}{2}\int_{\mathcal{O}} M^m \zeta(\rho^{m,n}(t)) |\hat{\psi}^{m,n}(t)|^2 \,\mathrm{d}x\,\mathrm{d}\bm{q} + \int_0^t \int_{\mathcal{O}} M^m |\nabla_x \hat{\psi}^{m,n}|^2 \,\mathrm{d}x\,\mathrm{d}\bm{q}\,\mathrm{d}s \\
&\quad + \int_0^t \int_{\mathcal{O}} M^m \mathbb{A}(\nabla_{\bm{q}} \hat{\psi}^{m,n}) : \nabla_{\bm{q}} \hat{\psi}^{m,n} \,\mathrm{d}x\,\mathrm{d}\bm{q}\,\mathrm{d}s \\
&= \frac{1}{2} \int_{\mathcal{O}} M^m \zeta(\rho_{0}^m) |\hat{\psi}_{0,n}|^2 \,\mathrm{d}x\,\mathrm{d}\bm{q} + \int_0^t \int_{\mathcal{O}} M \zeta(\rho^{m,n}) \Lambda_\ell(\xi^{m,n}) (\nabla_x \bm{u}^{m,n}) \bm{q} : \nabla_{\bm{q}} \hat{\psi}^{m,n} \,\mathrm{d}x\,\mathrm{d}\bm{q}\,\mathrm{d}s.
\end{split}
\end{equation*}
On noting (\ref{eq1.14}), (\ref{eq26}) and the presence of the truncation function $\Lambda_{\ell}(\cdot)$ we can apply H\"{o}lder's inequality and Young's inequality to get that
\begin{equation}\label{eq92.1}
\begin{split}
&\zeta_{\min} \int_{\mathcal{O}} M^m |\hat{\psi}^{m,n}(t)|^2 \,\mathrm{d}x\,\mathrm{d}\bm{q} + 2 \int_0^t \int_{\mathcal{O}} M^m |\nabla_x \hat{\psi}^{m,n}|^2 \,\mathrm{d}x\,\mathrm{d}\bm{q}\,\mathrm{d}s + C_1\int_0^t \int_{\mathcal{O}} M^m |\nabla_{\bm{q}} \hat{\psi}^{m,n}|^2 \,\mathrm{d}x\,\mathrm{d}\bm{q}\,\mathrm{d}s \\
& \leq \zeta_{\max} \int_{\mathcal{O}} M^m |\hat{\psi}_{0,n}|^2 \,\mathrm{d}x\,\mathrm{d}\bm{q} +\frac{1}{C_1} \int_0^t \int_{\mathcal{O}} M |\zeta(\rho^{m,n})|^2 |\Lambda_\ell(\xi^{m,n})|^2 |\nabla_x \bm{u}^{m,n}|^2 |\bm{q}|^2 \,\mathrm{d}x\,\mathrm{d}\bm{q}\,\mathrm{d}s \\
&\leq C(\ell, M, \zeta_{\max}, \hat{\psi}_0).
\end{split}
\end{equation}
From the estimates (\ref{eq73}) and (\ref{eq92.1}) we deduce that
\begin{align}
\label{eq1.96} \sup_{t \in [0,T]} \| \bm{v}^{m,n}(t) \|_2 &\leq C, \\
\label{eq1.97} \sup_{t \in [0,T]} \| \hat{\psi}^{m,n}(t) \|_{L^2_{M^m}(\mathcal{O})} &\leq C.
\end{align}
Next, we shall derive estimates for the norms of $\partial_t \bm{v}^{m,n}$ and $\partial_t \hat{\psi}^{m,n}$. We take the test function $\bm{w} = \partial_t \bm{v}^{m,n}$ in (\ref{eq60}). Since $V^m$ is finite-dimensional, all norms on $V^m$ are equivalent (with constants depending on $m$). We obtain, using H\"{o}lder's inequality, that
\begin{equation}\label{eq74}
\begin{split}
\int_\Omega \rho^{m,n} \left| \frac{\partial \bm{v}^{m,n}}{\partial t} \right|^2 \,\mathrm{d}x &\leq C\|\bm{f}^m \|_2 \left\| \frac{\partial \bm{v}^{m,n}}{\partial t} \right\|_2 + C \| \bm{u}^{m,n} \|_3 \| \nabla_x \bm{v}^{m,n} \|_2 \left\| \frac{\partial \bm{v}^{m,n}}{\partial t} \right\|_6 \\
&\quad + C\left\| \frac{\partial \bm{v}^{m,n}}{\partial t} \right\|_{W^{1,2}(\Omega)} + C \| \nabla_x \bm{v}^{m,n} \|_2 \left\| \frac{\partial \bm{v}^{m,n}}{\partial t} \right\|_{W^{1,2}(\Omega)} \\
&\leq C(m,n)\left\| \frac{\partial \bm{v}^{m,n}}{\partial t} \right\|_2.
\end{split}
\end{equation}
On noting that $\rho^{m,n} \geq \rho_{\min}$ it follows that
\begin{equation*}
\left\| \frac{\partial \bm{v}^{m,n}}{\partial t} \right\|_2 \leq \frac{C(m,n)}{\rho_{\min}}.
\end{equation*}
Since all norms on $V^m$ are equivalent, we have that
\begin{equation}\label{eq76}
\sup_{t \in [0,T]} \left\| \frac{\partial \bm{v}^{m,n}}{\partial t} \right\|_{V^m} \leq C(m,n).
\end{equation}
Now let us take the test function $\varphi = \partial_t \hat{\psi}^{m,n}$ in (\ref{eq84}). Since $X^n$ is finite-dimensional, all norms on $X^n$ are equivalent (with constants depending on $n$). We obtain, using H\"{o}lder's inequality, that
\begin{equation}\label{eq93}
\begin{split}
\int_{\mathcal{O}} M^m \zeta(\rho^{m,n}) \left| \frac{\partial \hat{\psi}^{m,n}}{\partial t} \right|^2 \,\mathrm{d}x\,\mathrm{d}\bm{q} &\leq C(M, \zeta_{\max}) \| \nabla_x \hat{\psi}^{m,n} \|_{L^2(\mathcal{O})} \| \bm{v}^{m,n} \|_{L^3(\Omega)} \left\| \frac{\partial \hat{\psi}^{m,n}}{\partial t} \right\|_{L^6(\mathcal{O})} \\
&\quad + C(M) \| \nabla_{x,\bm{q}} \hat{\psi}^{m,n} \|_{L^2(\mathcal{O})} \left\| \frac{\partial \hat{\psi}^{m,n}}{\partial t} \right\|_{W^{1,2}(\mathcal{O})} \\
&\quad + C(\ell, M,\zeta_{\max}) \| \nabla_x \bm{u}^{m,n} \|_{L^2(\Omega)} \left\| \frac{\partial \hat{\psi}^{m,n}}{\partial t} \right\|_{W^{1,2}(\mathcal{O})} \\
&\leq C(n) \left\| \frac{\partial \hat{\psi}^{m,n}}{\partial t} \right\|_{L^2(\mathcal{O})}.
\end{split}
\end{equation}
Since $\zeta(\rho^{m,n}) \geq \zeta_{\min}$ and $M^m \geq 1/m$, we have
\begin{equation*}
\left\| \frac{\partial \hat{\psi}^{m,n}}{\partial t} \right\|_2 \leq \frac{mC(n)}{\zeta_{\min}}.
\end{equation*}
Since all norms on $X^n$ are equivalent, we have that
\begin{equation}\label{eq94}
\sup_{t \in [0,T]} \left\| \frac{\partial \hat{\psi}^{m,n}}{\partial t} \right\|_{X^n} \leq C(m,n).
\end{equation}
From the estimates (\ref{eq76}) and (\ref{eq94}) we obtain that
\begin{align}
\label{eq1.104} \sup_{t \in [0,T]} \left\| \frac{\partial \bm{v}^{m,n}}{\partial t} \right\|_{V^m} &\leq C(m,n), \\
\label{eq1.105} \sup_{t \in [0,T]} \left\| \frac{\partial \hat{\psi}^{m,n}}{\partial t} \right\|_{X^n} &\leq C(m,n).
\end{align}
From (\ref{eq1.96}), (\ref{eq1.97}), (\ref{eq1.104}) and (\ref{eq1.105}) we see that if we take $(\bm{u}^{m,n}, \xi^{m,n}) \in \mathcal{K} \times \mathcal{S}$, then $\Theta(\bm{u}^{m,n}, \xi^{m,n}) = (\bm{v}^{m,n}, \hat{\psi}^{m,n})$ still belongs to $\mathcal{K} \times \mathcal{S}$. Therefore, we have shown that (i) $\Theta$ maps $\mathcal{K} \times \mathcal{S}$ into itself. \par
(ii) From (\ref{eq1.96}) we obtain that, for any $t \in [0,T]$, $\| \bm{v}^{m,n}(t) \|_{V^m} \leq C(m)$, thanks to the fact that all norms are equivalent in finite-dimensional spaces. Then, the subset $\mathcal{K}(t) \coloneqq \{ \bm{v}(t); \bm{v} \in \mathcal{K} \}$ is relatively compact in $V^m$. Also, since $\bm{v} \in C^1([0,T]; V^m)$ and $\bm{v}$ satisfies (\ref{eq1.104}), we deduce that
\begin{equation*}
\| \bm{v}(t_1) - \bm{v}(t_2) \|_{V^m} \leq C(m,n)|t_1 - t_2|.
\end{equation*}
Then, for all $t_1 \in [0,T]$ and for all $\varepsilon > 0$, there exists a $\delta = \varepsilon / 2C(m,n) > 0$ such that 
\begin{equation*}
\| \bm{v}(t_1) - \bm{v}(t_2) \|_{V^{m}} < \varepsilon
\end{equation*}
for all $t_2 \in [0,T]$ with $|t_1 - t_2| < \delta$ and for all $\bm{v} \in \mathcal{K}$. Therefore, by the Arzel\`{a}--Ascoli Theorem, we deduce that $\mathcal{K}$ is relatively compact in $C([0,T]; V^m)$. \par
Since $M^m \geq 1/m$, we have from (\ref{eq1.97}) that, for any $t \in [0,T]$, $\| \hat{\psi}^{m,n}(t) \|_{L^2(\mathcal{O})} \leq C(m)$. This then gives $\| \hat{\psi}^{m,n}(t) \|_{X^n} \leq C(m,n)$, since all norms are equivalent in finite-dimensional spaces. Thus the subset $\mathcal{S}(t) \coloneqq \{ \hat{\psi}(t): \hat{\psi} \in \mathcal{S} \}$ is relatively compact in $X^n$. Also, since $\hat{\psi} \in C^1([0,T]; X^n)$ and $\hat{\psi}$ satisfies (\ref{eq1.105}), we deduce that 
\begin{equation*}
\| \hat{\psi}(t_3) - \hat{\psi}(t_4) \|_{X^n} \leq C(m,n) | t_3 - t_4 |.
\end{equation*}
Then, for all $t_3 \in [0,T]$ and for all $\varepsilon^{\prime} > 0$, there exists a $\delta^{\prime} = \varepsilon^{\prime} / 2C(m,n) > 0$ such that
\begin{equation*}
\| \hat{\psi}(t_3) - \hat{\psi}(t_4) \|_{X^n} < \varepsilon^{\prime}
\end{equation*}
for all $t_4 \in [0,T]$ with $| t_3 - t_4 | < \delta^{\prime}$ and for all $\hat{\psi} \in \mathcal{S}$. Therefore, by the Arzel\'{a}--Ascoli Theorem, $\mathcal{S}$ is relatively compact in $C([0,T]; X^n)$. Hence, we have shown that $\mathcal{K} \times \mathcal{S}$ is relatively compact in $C([0,T]; V^m) \times C([0,T]; X^n)$. \par
(iii) Next we will show that $\Theta$ is continuous; to this end it suffices to show that $\Theta$ is sequentially continuous. Let $( \bm{u}^{m,n}_{(r)})_{r=1}^\infty$ be a sequence in $C([0,T]; V^m)$ which converges to some $\bm{u}^{m,n}$ in $C([0,T]; V^m)$ as $r \to \infty$ and let $( \xi^{m,n}_{(r)})_{r=1}^\infty$ be a sequence in $C([0,T]; X^n)$ which converges to some $\xi^{m,n}$ in $C([0,T]; X^n)$ as $r \to \infty$. To show that $\Theta$ is sequentially continuous we shall show that $(\bm{v}^{m,n}_{(r)}, \hat{\psi}^{m,n}_{(r)}) = \Theta(\bm{u}^{m,n}_{(r)}, \xi^{m,n}_{(r)})$ converges to $\Theta(\bm{u}^{m,n}, \xi^{m,n})$ as $r \to \infty$ and $\Theta(\bm{u}^{m,n}, \xi^{m,n}) = (\bm{v}^{m,n}, \hat{\psi}^{m,n})$. 

For any $r$, let $\rho^{m,n}_{(r)}$ be the unique solution to the transport equation (\ref{eq56}) corresponding to the velocity field $\bm{u}_{(r)}^{m,n}$. Let $\rho^{m,n}$ be the unique solution to (\ref{eq56}) corresponding to the velocity field $\bm{u}^{m,n}$.
All of these transport problems are associated with the same initial datum $\rho_{0}^m \in W^{d+1,2}(\Omega)$ satisfying \eqref{eq23} (and converging to $\rho_0$ in $L^1(\Omega)$ as $m \rightarrow \infty$; here though, $m\geq 1$ and $n\geq 1$ are fixed, and we are interested, instead, in the limit $r \rightarrow \infty$).

We further define $\lambda^{m,n}_{(r)}$ and $\lambda^{m,n}$ by
\begin{align*}
\lambda^{m,n}_{(r)}(x,t):=\zeta(\rho^{m,n}_{(r)}(x,t))\int_D M^m(\bm{q})\,[\xi^{m,n}_{(r)}(x,\bm{q},t)]_+\,\mathrm{d}\bm{q}, \\
\lambda^{m,n}(x,t):=\zeta(\rho^{m,n}(x,t))\int_D M^m(\bm{q})\,[\xi^{m,n}(x,\bm{q},t)]_+\,\mathrm{d}\bm{q}.
\end{align*}
By Theorem VI.1.9 in \cite{MR2986590} we deduce that, as $r \to \infty$,
\begin{equation}\label{eq79}
\rho_{(r)}^{m,n} \longrightarrow \rho^{m,n} \qquad \text{strongly in $C([0,T]; L^p(\Omega))$ for any $p \in [1,\infty)$}.
\end{equation}
From the assumptions (\ref{eq26}) on $\zeta$ we then deduce that, as $r \to \infty$,
\begin{alignat}{2}
\label{eq81} \zeta(\rho_{(r)}^{m,n}) &\longrightarrow \zeta(\rho^{m,n}) \qquad &&\text{strongly in $C([0,T]; L^p(\Omega))$ for any $p \in [1,\infty)$}.
\end{alignat}
Hence, and thanks to the global Lipschitz continuity of the mapping $s \in \mathbb{R} \mapsto [s]_+ \in \mathbb{R}_{\geq 0}$, we have that
\begin{equation}\label{eq79a}
\lambda_{(r)}^{m,n} \longrightarrow \lambda^{m,n} \qquad \text{strongly in $C([0,T]; L^p(\Omega))$ for any $p \in [1,\infty)$}.
\end{equation}
From the assumptions (\ref{eq26}) on $\mu$ we then deduce that, as $r \to \infty$,
\begin{alignat}{2}
\label{eq80} \mu(\rho_{(r)}^{m,n},\lambda^{m,n}_{(r)}) &\longrightarrow \mu(\rho^{m,n},\lambda^{m,n}) \qquad &&\text{strongly in $L^{\infty}(0,T; L^p(\Omega))$ for any $p \in [1,\infty)$}.
\end{alignat}
For any $r$, we take the test function in (\ref{eq69}) and (\ref{eq69.1}) to be $\bm{v}_{(r)}^{m,n}$ and $\hat{\psi}^{m,n}_{(r)}$ respectively and perform a similar procedure as in (\ref{eq69})--(\ref{eq92.1}). Then, we deduce that
\begin{align}
\label{eq82.1} \sup_r \| \bm{v}_{(r)}^{m,n} \|_{C([0,T]; L^2(\Omega; \mathbb{R}^d))} &\leq C, \\
\label{eq82.2} \sup_r \| \nabla_x \bm{v}_{(r)}^{m,n} \|_{L^2(0,T; L^2(\Omega; \mathbb{R}^{d \times d}))} &\leq C, \\
\label{eq101} \sup_r \| \hat{\psi}_{(r)}^{m,n} \|_{C([0,T]; L^2_{M^m}(\mathcal{O}))} &\leq C,\\
\label{eq102} \sup_r \| \nabla_{x,\bm{q}} \hat{\psi}_{(r)}^{m,n} \|_{L^2(0,T; L^2_{M^m}(\mathcal{O}))} &\leq C.
\end{align}
Taking the test function in (\ref{eq60}) to be $\partial_t \bm{v}_{(r)}^{m,n}$ we perform a similar argument as in (\ref{eq74}), (\ref{eq76}) to deduce that
\begin{equation*}
\sup_r \left\| \frac{\partial \bm{v}_{(r)}^{m,n}}{\partial t} \right\|_{C([0,T]; V^m)} \leq C(m,n).
\end{equation*}
Similarly, taking the test function in (\ref{eq84}) to be $\partial_t \hat{\psi}_{(r)}^{m,n}$, as in (\ref{eq93}), (\ref{eq94}), we get
\begin{equation*}
\sup_r \left\| \frac{\partial \hat{\psi}_{(r)}^{m,n}}{\partial t} \right\|_{C([0,T]; X^n)} < C(m,n).
\end{equation*}
Now since $\bm{v}_{(r)}^{m,n} \in \mathcal{K}$ for all $r$ and $\mathcal{K}$ is relatively compact in $C([0,T]; V^m)$, there exists a subsequence (not relabelled) such that, as $r \to \infty$,
\begin{equation}\label{eq83}
\bm{v}_{(r)}^{m,n} \longrightarrow \bm{v}^{\prime} \qquad \text{strongly in $C([0,T]; V^m)$}.
\end{equation}
From the bounds (\ref{eq82.1}) and (\ref{eq82.2}) we deduce the following weak convergence:
\begin{equation}\label{eq84aa}
\bm{v}_{(r)}^{m,n} \rightharpoonup \bm{v}^\prime \qquad \text{weakly in $L^2(0,T; W^{1,2}(\Omega; \mathbb{R}^d))$}.
\end{equation}
With the convergence results (\ref{eq79}), (\ref{eq79a}),  (\ref{eq80}), (\ref{eq81}), (\ref{eq83}) and $(\ref{eq84aa})$ we can pass to the limit as $r \to \infty$ in the equation satisfied by $\bm{v}_{(r)}^{m,n}$ and the limit $\bm{v}^\prime$ satisfies (\ref{eq60}). However, with $\rho^{m,n}$, $\bm{u}^{m,n}$ and $\xi^{m,n}$ given, (\ref{eq60}) can be solved with a unique solution $\bm{v}^{m,n}$ as proved in Claim \ref{clm2.1}, which then implies that $\bm{v}^\prime \equiv \bm{v}^{m,n}$. \par
Meanwhile, since $\hat{\psi}_{(r)}^{m,n} \in \mathcal{S}$ for all $r$ and $\mathcal{S}$ is relatively compact in $C([0,T]; X^n)$, there exists a subsequence (not relabelled) such that, as $r \to \infty$,
\begin{equation}\label{eq104}
\hat{\psi}_{(r)}^{m,n} \longrightarrow \hat{\psi}^\prime \qquad \text{strongly in $C([0,T]; X^n)$}.
\end{equation}
From the bounds (\ref{eq101}) and (\ref{eq102}) we deduce the following weak convergence:
\begin{align}\label{eq105a}
\sqrt{M^m} \nabla_x \hat{\psi}_{(r)}^{m,n} &\rightharpoonup \sqrt{M^m} \nabla_x \hat{\psi}^\prime \qquad \text{weakly in $L^2(0,T; L^2(\mathcal{O}))$}, \\
\label{eq106} \sqrt{M^m} \nabla_{\bm{q}} \hat{\psi}_{(r)}^{m,n} &\rightharpoonup \sqrt{M^m} \nabla_{\bm{q}} \hat{\psi}^\prime \qquad \text{weakly in $L^2(0,T; L^2(\mathcal{O}))$}.
\end{align}
With the convergence results (\ref{eq81}), (\ref{eq104}), (\ref{eq105a}) and (\ref{eq106}) we can pass to the limit as $r \to \infty$ in the equation satisfied by $\hat{\psi}_{(r)}^{m,n}$ and the limit $\hat{\psi}^\prime$ satisfies (\ref{eq84}). However, with $\rho^{m,n}$, $\bm{u}^{m,n}$ and $\xi^{m,n}$ given, (\ref{eq84}) can be solved uniquely with $\hat{\psi}^{m,n}$ as proved in Claim \ref{clm2.1.1}, which implies that $\hat{\psi}^\prime \equiv \hat{\psi}^{m,n}$. Hence, we have shown that the mapping $\Theta$ is continuous. \par
Finally, by Schauder's fixed-point theorem, we deduce that $\Theta: \overline{\mathcal{K} \times \mathcal{S}} \to \overline{\mathcal{K} \times \mathcal{S}}$ has a fixed point $(\bm{v}^{m,n}, \hat{\psi}^{m,n})$ in $\overline{\mathcal{K} \times \mathcal{S}}$. Thus, also, 
\[ \varrho^{m,n}(x,t)=\zeta(\rho^{m,n}(x,t))\int_D M^m(\bm{q})\, [\hat\psi^{m,n}(x,\bm{q},t)]_{+}\,\mathrm{d}\bm{q}.\]
and
\begin{align*}
\tau^{m,n}(x,t) &=  -k \int_D \bigg[ K M(\bm{q}) \zeta(\rho^{m,n}(x,t))T_\ell(\hat\psi^{m,n}(x,\bm{q},t))I \\
&\qquad +  \sum_{j=1}^K \zeta(\rho^{m,n}(x,t)) T_\ell(\hat\psi^{m,n}(x,\bm{q},t)\nabla_{\bm{q}^j} M(\bm{q}) \otimes \bm{q}^j \bigg] \,\mathrm{d}\bm{q}.
\end{align*}
That completes the proof of the existence of a solution $(\rho^{m,n}, \bm{v}^{m,n}, \hat{\psi}^{m,n})$ to the partially Galerkin discretized system (\ref{eq48})--(\ref{eq1.66}).
\end{proof}

In the following sections we shall derive uniform bounds independent of the parameters $n$, $m$ and $\ell$, and then use those to successively pass to the limits with $n, m, \ell \to \infty$.

\subsection{Passage to the limit with $n$}
The goal of this section is to pass to the limit as $n \to \infty$. To achieve this we shall first derive uniform estimates independent of $n$. We note that from the definition (\ref{eq49.1}) of $\tau^{m,n}$ we deduce that
\begin{equation}\label{eq161}
|\tau^{m,n}| \leq C(\ell, \zeta_{\max}, M),
\end{equation}
which implies that there exists a subsequence (not relabelled) such that
\begin{equation}\label{eq171}
\tau^{m,n} \rightharpoonup \tau^m \qquad \text{weak* in $L^{\infty}(\Omega \times (0,T); \mathbb{R}^{d \times d})$}.
\end{equation}
\par
By Theorem VI.1.6 in \cite{MR2986590} we find that
\begin{equation*}
\sup_{t \in (0,T)} \| \rho^{m,n}(t) \|_{L^\infty(\Omega)} \leq \| \rho_0^m \|_{L^\infty(\Omega)} \leq \rho_{\max}.
\end{equation*}
Moreover, Proposition VI.1.8 in \cite{MR2986590} gives
\begin{equation}\label{eq163}
\rho^{m,n} \geq \rho_{\min}.
\end{equation}
We deduce the existence of subsequences (not relabelled) such that, as $n \to \infty$,
\begin{align}
\label{eq173} \rho^{m,n} &\rightharpoonup \rho^m \qquad \ \, \, \text{weak* in $L^\infty(\Omega \times (0,T))$}, \\
\label{eq174} (\rho^{m,n})^2 &\rightharpoonup \vartheta^m \qquad \ \,  \text{weak* in $L^\infty(\Omega \times (0,T))$}.
\end{align}
By the renormalization property, for any $\beta \in C^1(\mathbb{R})$, $\beta(\rho^{m,n})$ satisfies
\begin{equation*}
\frac{\partial \beta(\rho^{m,n})}{\partial t} + \divergence_x (\beta(\rho^{m,n}) \bm{v}^{m,n}) = 0
\end{equation*}
in the distributional sense. It follows from the Sobolev embedding theorem that
\begin{equation}\label{eq108}
\begin{split}
\left\| \frac{\partial \beta(\rho^{m,n})}{\partial t} \right\|_{L^2(0,T; W^{-1,p}(\Omega))} &\leq \| \beta(\rho^{m,n}) \bm{v}^{m,n} \|_{L^2(0,T; L^p(\Omega;\mathbb{R}^d))}\\
& \leq C \| \bm{v}^{m,n} \|_{L^2(0,T; L^p(\Omega; \mathbb{R}^d))} \\
&\leq  C \| \bm{v}^{m,n} \|_{L^2(0,T; W^{1,2}(\Omega; \mathbb{R}^d))} \\
&\leq C
\end{split}
\end{equation}
for any $p \in [1,\infty)$ when $d=2$ and $p \in [1,6]$ when $d=3$.
\par
We multiply the $i$-th equation in (\ref{eq47}) by $c^{m,n}_i(t)$ and sum with respect to $i = 1,\dots, m$ to deduce the following identity
\begin{multline}\nonumber
\frac{1}{2}\frac{\mathrm{d}}{\,\mathrm{d}t} \int_\Omega \rho^{m,n}|\bm{v}^{m,n}|^2 \,\mathrm{d}x + \frac{1}{2} \int_\Omega \frac{\partial \rho^{m,n}}{\partial t} |\bm{v}^{m,n}|^2 \,\mathrm{d}x - \frac{1}{2}\int_\Omega \rho^{m,n} \bm{v}^{m,n} \cdot \nabla_x (|\bm{v}^{m,n}|^2) \,\mathrm{d}x \\
+ \int_\Omega \mu(\rho^{m,n},\varrho^{m,n}) |D(\bm{v}^{m,n})|^2 \,\mathrm{d}x = - \int_\Omega \tau^{m,n} : D(\bm{v}^{m,n}) \,\mathrm{d}x+ (\rho^{m,n} \bm{f}^m, \bm{v}^{m,n}).
\end{multline}
The second term and the third term in the above equality add up to $0$, since we can take the test function $\eta = |\bm{v}^{m,n}|^2$ in (\ref{eq48}). Using (\ref{eq26}), (\ref{eq161}), Young's inequality and Gronwall's inequality, we find that
\begin{equation*}
\sup_{t \in (0,T)} \| \sqrt{\rho^{m,n}(t)} \bm{v}^{m,n}(t) \|^2_{L^2(\Omega; \mathbb{R}^d)} + \mu_{\min} \int_0^T \int_\Omega |D(\bm{v}^{m,n})|^2 \,\mathrm{d}x\,\mathrm{d}t \leq C(\ell, M, \bm{v}_0, \bm{f}, \mu_{\min}, \rho_{\max}).
\end{equation*}
Using (\ref{eq163}) and Korn's inequality (\ref{eq1.26}) we deduce that
\begin{equation*}
\sup_{t \in (0,T)} \| \bm{v}^{m,n}(t) \|^2_{L^2(\Omega; \mathbb{R}^d)} + c_0 \mu_{\min} \int_0^T \| \bm{v}^{m,n} \|^2_{W^{1,2}(\Omega; \mathbb{R}^d)} \, \mathrm{d}t \leq C(\ell),
\end{equation*}
which implies, by the orthogonality of the basis, that
\begin{equation}\label{eq176}
\sup_{t \in (0,T);\, i = 1, \dots,m} |c^{m,n}_i(t)| \leq C(m,\ell).
\end{equation}
From the definition of $\bm{v}^{m,n}$ we see that
\begin{equation}
\label{eq169} \frac{\partial \bm{v}^{m,n}}{\partial t}(x,t) = \sum_{j=1}^m \frac{\mathrm{d}c^{m,n}_j(t)}{\,\mathrm{d}t} \bm{w}_j(x).
\end{equation}
Substituting (\ref{eq169}) into (\ref{eq47}) we have that, for a fixed $i \in \{ 1,\dots, m \}$,
\begin{equation*}
\begin{split}
\rho_{\min} \left| \frac{\mathrm{d}c^{m,n}_i(t)}{\,\mathrm{d}t} \right| &\leq |(\partial_t \rho^{m,n} \bm{v}^{m,n},\bm{w}_i)|  + |(\rho^{m,n} \bm{v}^{m,n} \otimes \bm{v}^{m,n}, \nabla_x \bm{w}_i)| + |(\mu(\rho^{m,n},\varrho^{m,n})D(\bm{v}^{m,n}), \nabla_x \bm{w}_i)| \\
&\quad + |(\tau^{m,n}, \nabla_x \bm{w}_i)| + |(\rho^{m,n}\bm{f}^m, \bm{w}_i)| \\
&\leq \| \partial_t \rho^{m,n} \|_{W^{-1,2}(\Omega)} \| \bm{v}^{m,n} \|_{W^{1,2}(\Omega; \mathbb{R}^{d})} \| \bm{w}_i \|_{W^{1,\infty}(\Omega; \mathbb{R}^d)} \\
&\quad + \rho_{\max} \| \bm{v}^{m,n} \|^2_{L^2(\Omega; \mathbb{R}^d)} \| \nabla_x \bm{w}_i \|_{L^\infty(\Omega; \mathbb{R}^{d \times d})} \\
&\quad + \mu_{\max} \| D(\bm{v}^{m,n}) \|_{L^2(\Omega; \mathbb{R}^{d \times d})} \| \nabla_x \bm{w}_i \|_{L^\infty(\Omega; \mathbb{R}^{d \times d})} + C(\ell)\| \nabla_x \bm{w}_i \|_{L^\infty(\Omega; \mathbb{R}^{d \times d})} \\
&\quad + C(\bm{f}, \rho_{\max}) \| \nabla_x \bm{w}_i \|_{L^\infty(\Omega; \mathbb{R}^{d \times d})}   \\
&\leq C(m,\ell) \| \bm{w}_i \|_{W^{1,\infty}(\Omega; \mathbb{R}^d)}.
\end{split}
\end{equation*}
Since $W^{d+1,2}(\Omega) \hookrightarrow W^{1,\infty}(\Omega)$, we have that $\| \bm{w}_i \|_{W^{1,\infty}(\Omega; \mathbb{R}^d)} \leq C(m)$. Therefore,
\begin{equation}\label{eq179}
\sup_{t \in (0,T);\, i =1,\dots,m} \left| \frac{\mathrm{d}c^{m,n}_i(t)}{\,\mathrm{d}t} \right| \leq C(m,\ell).
\end{equation}
By the definition of $\bm{v}^{m,n}$ it is easy to see that
\begin{align}
\begin{aligned}
\| \bm{v}^{m,n} \|_{L^\infty(\Omega; \mathbb{R}^d)} &\leq C(m,\ell), \\
\label{eq177} \| \nabla_x \bm{v}^{m,n} \|_{L^\infty(\Omega; \mathbb{R}^{d \times d})} &\leq C(m,\ell).
\end{aligned}
\end{align}
The uniform bounds (\ref{eq176}) and (\ref{eq179}) imply the following convergence results:
\begin{align}
\label{eq186} c_i^{m,n} &\rightharpoonup c_i^m \qquad \text{weak* in $W^{1,\infty}((0,T))$},\\
\label{eq187} c_i^{m,n} &\to c_i^m \qquad \text{strongly in $C([0,T])$},
\end{align}
which then imply that
\begin{equation}\label{eq188}
\bm{v}^{m,n} \to \bm{v}^m \qquad \text{strongly in $C([0,T]; W^{1,2}_{0,\divergence}(\Omega; \mathbb{R}^d))$}.
\end{equation}
Noting (\ref{eq173}) and (\ref{eq174}) we deduce that
\begin{align}
\label{eq189} \rho^{m,n} \bm{v}^{m,n} &\rightharpoonup \rho^{m} \bm{v}^m \qquad \ \, \text{weakly in $L^p(\Omega \times (0,T) )$}, \\
\label{eq189.1} (\rho^{m,n})^2 \bm{v}^{m,n} &\rightharpoonup \vartheta^m \bm{v}^m \qquad \ \, \text{weakly in $L^p(\Omega \times (0,T))$},
\end{align}
where $p \in [1,\infty)$ when $d=2$ and $p \in [1,6]$ when $d=3$. From the estimate (\ref{eq108}) we deduce that
\begin{align}
\label{eq191} \frac{\partial \rho^{m,n}}{\partial t} &\rightharpoonup \frac{\partial \rho^m}{\partial t} \qquad \ \text{weakly in $L^2(0,T; W^{-1,p}(\Omega))$}, \\
\label{eq191.1} \frac{\partial (\rho^{m,n})^2}{\partial t} &\rightharpoonup \frac{\partial \vartheta^m}{\partial t} \qquad \ \text{weakly in $L^2(0,T; W^{-1,p}(\Omega))$},
\end{align}
where $p \in (1,\infty)$ when $d=2$ and $p \in (1,6]$ when $d=3$. With the convergence results (\ref{eq191}) and (\ref{eq189}) we pass to the limit as $n \to \infty$ and deduce that $\rho^{m}$ is the (unique) weak solution of 
\begin{equation}\label{eq1.125}
\frac{\partial \rho^m}{\partial t} + \divergence_x (\rho^m \bm{v}^m) = 0
\end{equation}
with the initial condition
\begin{equation}\label{rhomnini}
\rho^m(0) = \rho_0^m.
\end{equation}
By the renormalization property, if we define $P^{m,n} = (\rho^{m,n})^2$, then $P^{m,n}$ solves the following initial-value problem:
\begin{align*}
\frac{\partial P^{m,n}}{\partial t} + \divergence_x (P^{m,n} \bm{v}^{m,n}) &= 0, \\
P^{m,n}(0) &= (\rho_0^m)^2.
\end{align*}
Similarly, $P^m = (\rho^m)^2$ solves the following problem:
\begin{align*}
\frac{\partial P^m}{\partial t} + \divergence_x (P^m \bm{v}^m) &= 0, \\
P^m(0) &= (\rho_0^m)^2.
\end{align*}
With the convergence results (\ref{eq191.1}) and (\ref{eq189.1}) we deduce that $\vartheta^m$ also solves the following initial-value problem
\begin{align}
\begin{aligned}
\label{eq1.131} \frac{\partial \vartheta^m}{\partial t} + \divergence_x (\vartheta^m \bm{v}^m) &= 0, \\
\vartheta^m(0) &= (\rho_0^m)^2.
\end{aligned}
\end{align}
However, since (\ref{eq1.131}) is of the same form as (\ref{eq1.125}), the solution to (\ref{eq1.131}) is unique. Hence, $\vartheta^m = (\rho^m)^2$. Then, the convergence result (\ref{eq174}) gives
\begin{equation*}
\int_0^T \int_\Omega |\rho^{m,n}|^2 \,\mathrm{d}x\,\mathrm{d}t \to \int_0^T \int_\Omega |\rho^m|^2 \,\mathrm{d}x\,\mathrm{d}t,
\end{equation*}
which then implies that
\begin{equation*}
\rho^{m,n} \to \rho^m \qquad \text{strongly in $L^2(\Omega \times (0,T))$}.
\end{equation*}
It then follows from (\ref{eq173}) that
\begin{equation}\label{eq194}
\rho^{m,n} \to \rho^m \qquad \text{strongly in $L^p(\Omega \times (0,T))$},
\end{equation}
for any $p \in [1,\infty)$. With the convergence result (\ref{eq188}) for $\bm{v}^{m,n}$ we can perform a similar argument as in Theorem VI.1.9 in \cite{MR2986590} and strengthen the above convergence to get
\begin{equation}\label{rhomn-strong}
\rho^{m,n} \to \rho^m \qquad \text{strongly in $C([0,T]; L^p(\Omega))$},
\end{equation}
for any $p \in [1,\infty)$. Thanks to the assumption (\ref{eq26}) on $\zeta$ we then have that
\begin{align}
\label{eq196} \zeta(\rho^{m,n}) &\to \zeta(\rho^m) \qquad \text{strongly in $C([0,T]; L^p(\Omega))$}.
\end{align}

We multiply the $i$-th equation in (\ref{eq49}) by $d_i^{m,n}$ and sum with respect to $i = 1,\dots,n$ to deduce the following identity
\begin{equation}\label{eq178}
\begin{split}
&\frac{1}{2} \frac{\mathrm{d}}{\,\mathrm{d}t} \int_{\mathcal{O}} M^m \zeta(\rho^{m,n}) (\hat{\psi}^{m,n})^2 \,\mathrm{d}x\,\mathrm{d}\bm{q} + \frac{1}{2} \int_{\mathcal{O}} M^m \frac{\partial \zeta(\rho^{m,n})}{\partial t} (\hat{\psi}^{m,n})^2 \,\mathrm{d}x\,\mathrm{d}\bm{q} \\
&- \frac{1}{2} \int_{\mathcal{O}} M^m \zeta(\rho^{m,n}) \bm{v}^{m,n} \cdot \nabla_x (\hat{\psi}^{m,n})^2 \,\mathrm{d}x\,\mathrm{d}\bm{q} + \int_{\mathcal{O}} M^m |\nabla_x \hat{\psi}^{m,n}|^2 \,\mathrm{d}x\,\mathrm{d}\bm{q} \\
&+ \int_{\mathcal{O}} M^m \mathbb{A}(\nabla_{\bm{q}} \hat{\psi}^{m,n}) : \nabla_{\bm{q}} \hat{\psi}^{m,n} \,\mathrm{d}x\,\mathrm{d}\bm{q} =  \left(M^m \zeta(\rho^{m,n}) \Lambda_\ell(\hat{\psi}^{m,n})(\nabla_x \bm{v}^{m,n}) \bm{q}, \nabla_{\bm{q}} \hat{\psi}^{m,n} \right)_{\mathcal{O}}.
\end{split}
\end{equation}
Note that $\zeta(\rho^{m,n})$ is a renormalized solution in the sense that
\begin{equation*}
\left\langle \partial_t \zeta(\rho^{m,n}), \eta \right\rangle - \left( \bm{v}^{m,n} \zeta(\rho^{m,n}), \nabla_x \eta \right) = 0 \qquad \forall \, \eta \in C^{0,1}([0,T]; C^{0,1}(\overline{\Omega})).
\end{equation*}
Taking the test function $\eta = \int_D M^m (\hat{\psi}^{m,n})^2 \,\mathrm{d}\bm{q}$ we see that the second term and the third term in (\ref{eq178}) add up to $0$. By Young's inequality, the definition of $\Lambda_\ell$, (\ref{eq26}) and (\ref{eq177}), we deduce that
\begin{equation}\label{eq180}
\begin{split}
 \left(M^m \zeta(\rho^{m,n}) \Lambda_\ell(\hat{\psi}^{m,n})(\nabla_x \bm{v}^{m,n}) \bm{q}, \nabla_{\bm{q}} \hat{\psi}^{m,n} \right)_{\mathcal{O}} &\leq \frac{C_1}{2} \int_{\mathcal{O}} M^m |\nabla_{\bm{q}} \hat{\psi}^{m,n}|^2 \,\mathrm{d}x\,\mathrm{d}\bm{q} \\
 &\quad + \frac{C(\ell, \zeta_{\max})}{2} \| \nabla_x \bm{v}^{m,n} \|^2_{\infty} \int_{\mathcal{O}} M^m \zeta(\rho^{m,n})(\hat{\psi}^{m,n})^2 \,\mathrm{d}x\,\mathrm{d}\bm{q} \\
&\leq \frac{C_1}{2} \int_{\mathcal{O}} M^m |\nabla_{\bm{q}} \hat{\psi}^{m,n}|^2 \,\mathrm{d}x\,\mathrm{d}\bm{q} \\
&\quad + C(m,\ell) \int_{\mathcal{O}} M^m \zeta(\rho^{m,n})(\hat{\psi}^{m,n})^2 \,\mathrm{d}x\,\mathrm{d}\bm{q}.
\end{split}
\end{equation}
Inserting (\ref{eq180}) into (\ref{eq178}) and using (\ref{eq1.14}) we deduce that
\begin{equation}\label{eq200}
\frac{\mathrm{d}}{\,\mathrm{d}t} \int_{\mathcal{O}} M^m \zeta(\rho^{m,n}) (\hat{\psi}^{m,n})^2 \,\mathrm{d}x\,\mathrm{d}\bm{q} + \int_{\mathcal{O}} M^m |\nabla_{x,\bm{q}} \hat{\psi}^{m,n}|^2 \,\mathrm{d}x\,\mathrm{d}\bm{q} \leq C(m,\ell) \int_{\mathcal{O}} M^m \zeta(\rho^{m,n}) (\hat{\psi}^{m,n})^2 \,\mathrm{d}x\,\mathrm{d}\bm{q}.
\end{equation}
Gronwall's inequality gives that 
\begin{equation*}
\sup_{t \in (0,T)} \| \sqrt{\zeta(\rho^{m,n}(t))} \hat{\psi}^{m,n}(t) \|^2_{L^2_{M^m}(\mathcal{O})} + \int_0^T \int_{\mathcal{O}} M^m |\nabla_{x,\bm{q}} \hat{\psi}^{m,n}|^2 \,\mathrm{d}x\,\mathrm{d}\bm{q}\,\mathrm{d}t \leq C(m,\ell).
\end{equation*}
By noting that $M^m \geq 1/m$ and (\ref{eq26}) we deduce that
\begin{equation}\label{eq201}
\sup_{t \in (0,T)} \| \hat{\psi}^{m,n}(t) \|^2_{L^2(\mathcal{O})} + \int_0^T \int_{\mathcal{O}} |\nabla_{x,\bm{q}} \hat{\psi}^{m,n}|^2 \,\mathrm{d}x\,\mathrm{d}\bm{q}\,\mathrm{d}t \leq C(m,\ell).
\end{equation}
Since $M^m$ is Lipschitz continuous, we can further deduce that
\begin{equation*}
\int_0^T \| M^m \hat{\psi}^{m,n} \|^2_{W^{1,2}(\mathcal{O})} \,\mathrm{d}t \leq C(m,\ell).
\end{equation*}
Then, using (\ref{eq49}) and a standard calculation, we obtain that 
\begin{equation}\label{eq226}
\int_0^T \| \partial_t(M^m \zeta(\rho^{m,n}) \hat{\psi}^{m,n})  \|^2_{(W^{1,2}(\mathcal{O}))'} \,\mathrm{d}t \leq C(m,\ell).
\end{equation}

Next, we shall focus on the fractional time derivative of $\hat{\psi}^{m,n}$. 
Integrating (\ref{eq49}) with respect to time over $(s, s+h)$, with $s < T-h$, we have
\begin{equation*}
\begin{split}
&\int_{\mathcal{O}} M^m[(\zeta(\rho^{m,n})\hat{\psi}^{m,n})(s+h) - (\zeta(\rho^{m,n}) \hat{\psi}^{m,n})(s)]\varphi^m_i \,\mathrm{d}x\,\mathrm{d}\bm{q} \\
&= \int_s^{s+h} \int_{\mathcal{O}} M^m \zeta(\rho^{m,n}) \hat{\psi}^{m,n} \bm{v}^{m,n} \cdot \nabla_x \varphi^m_i \,\mathrm{d}x\,\mathrm{d}\bm{q}\,\mathrm{d}t \\
&\quad - \int_s^{s+h} \int_{\mathcal{O}} M^m \nabla_x \hat{\psi}^{m,n} : \nabla_x \varphi^m_i \,\mathrm{d}x\,\mathrm{d}\bm{q}\,\mathrm{d}t \\
&\quad - \int_s^{s+h} \int_{\mathcal{O}} M^m \mathbb{A}(\nabla_{\bm{q}} \hat{\psi}^{m,n}) : \nabla_{\bm{q}} \varphi^m_i \,\mathrm{d}x\,\mathrm{d}\bm{q}\,\mathrm{d}t \\
&\quad + \int_s^{s+h} \int_{\mathcal{O}} M\zeta(\rho^{m,n}) \Lambda_\ell(\hat{\psi}^{m,n}) (\nabla_x \bm{v}^{m,n}) \bm{q}:\nabla_{\bm{q}} \varphi^m_i \,\mathrm{d}x\,\mathrm{d}\bm{q}\,\mathrm{d}t \\
&\eqqcolon U_1 + U_2 + U_3 + U_4.
\end{split}
\end{equation*}
For $U_1$,  we have by H\"{o}lder's inequality and the Gagliardo--Nirenberg inequality (\ref{eq1.25}) that
\begin{equation*}
\begin{split}
|U_1| &\leq C(M, \zeta_{\max}) \| \nabla_x \varphi^m_i \|_{L^2(\mathcal{O})} \int_s^{s+h} \| \hat{\psi}^{m,n} \|_{L^q(\Omega; L^2(D))} \| \bm{v}^{m,n} \|_{L^{\frac{2q}{q-2}}(\Omega)} \,\mathrm{d}t \\
&\leq C(M, \zeta_{\max}) \| \nabla_x \varphi^m_i \|_{L^2(\mathcal{O})} \int_s^{s+h} \| \hat{\psi}^{m,n} \|_{L^q(\Omega; L^2(D))} \| \bm{v}^{m,n} \|^{1-\frac{\mathrm{d}}{q}}_{L^2(\Omega)} \| \bm{v}^{m,n} \|^{\frac{\mathrm{d}}{q}}_{W^{1,2}(\Omega)} \,\mathrm{d}t \\
&\leq C(M, \zeta_{\max}) \| \nabla_x \varphi^m_i \|_{L^2(\mathcal{O})} \| \bm{v}^{m,n} \|^{1-\frac{\mathrm{d}}{q}}_{L^\infty(0,T; L^2(\Omega))}  \left( \int_s^{s+h} \| \bm{v}^{m,n} \|^2_{W^{1,2}(\Omega)} \,\mathrm{d}t \right)^{\frac{\mathrm{d}}{2q}} \\
&\quad \times \left( \int_s^{s+h} \| \hat{\psi}^{m,n} \|^{\frac{2q}{2q-d}}_{L^q(\Omega; L^2(D))} \,\mathrm{d}t \right)^{\frac{2q-d}{2q}} \\
&\leq C(M,\zeta_{\max}) h^{\frac{q-d}{2q}} \| \nabla_x \varphi^m_i \|_{L^2(\mathcal{O})} \| \bm{v}^{m,n} \|^{1 - \frac{\mathrm{d}}{q}}_{L^\infty(0,T; L^2(\Omega))} \| \bm{v}^{m,n} \|^{\frac{\mathrm{d}}{q}}_{L^2(s,s+h; W^{1,2}(\Omega))} \\
&\quad \times \left(\int_s^{s+h} \| \hat{\psi}^{m,n} \|^2_{L^q(\Omega; L^2(D))} \,\mathrm{d}t \right)^{\frac{1}{2}}\\
&\leq Ch^{\frac{q-d}{2q}} \| \nabla_{x,\bm{q}} \varphi^m_i \|_{L^2(\mathcal{O})},
\end{split}
\end{equation*}
where $q \in (2,\infty)$ when $d=2$ and $q \in (3,6]$ when $d=3$. For $U_2$, we have that
\begin{equation*}
|U_2| \leq C(M) h^{\frac{1}{2}} \| \nabla_x \hat{\psi}^{m,n} \|_{L^2(s,s+h; L^2(\mathcal{O}))} \| \nabla_x \varphi^m_i \|_{L^2(\mathcal{O})} \leq Ch^{\frac{1}{2}} \| \nabla_{x,\bm{q}} \varphi^m_i \|_{L^2(\mathcal{O})}.
\end{equation*}
Similarly as above, for $U_3$ we have that
\begin{equation*}
|U_3| \leq Ch^{\frac{1}{2}} \| \nabla_{x, \bm{q}} \varphi^m_i \|_{L^2(\mathcal{O})}.
\end{equation*}
For $U_4$, thanks to the presence of the truncation function $\Lambda_\ell$, we obtain that
\begin{equation*}
|U_4| \leq Ch^{\frac{1}{2}} \| \nabla_{\bm{q}} \varphi^m_i \|_{L^2(\mathcal{O})} \| \nabla_x \bm{v}^{m,n} \|_{L^2(s,s+h; L^2(\Omega))} \leq Ch^{\frac{1}{2}} \| \nabla_{x, \bm{q}} \varphi^{m}_i \|_{L^2(\mathcal{O})}.
\end{equation*}
Hence,
\begin{equation}\label{eq132}
\left| \int_{\mathcal{O}} M^m[(\zeta(\rho^{m,n})\hat{\psi}^{m,n})(s+h) - (\zeta(\rho^{m,n}) \hat{\psi}^{m,n})(s)]\varphi^{m}_i \,\mathrm{d}x\,\mathrm{d}\bm{q} \right| \leq C(h^{\frac{1}{2}} + h^{\frac{q-d}{2q}}) \| \nabla_{x, \bm{q}} \varphi^m_i \|_{L^2(\mathcal{O})},
\end{equation}
where $q \in (2,\infty)$ when $d=2$ and $q \in (3,6]$ when $d=3$. By the renormalization property $\zeta(\rho^{m,n})$ satisfies
\begin{equation*}
\int_0^T \int_\Omega \left( \frac{ \partial \zeta(\rho^{m,n})}{\partial t} \eta - \zeta(\rho^{m,n}) \bm{v}^{m,n} \cdot \nabla_x \eta \right) \,\mathrm{d}x\,\mathrm{d}t = 0.
\end{equation*}
Taking $\eta = \chi_{[s,s+h]} \int_D M^m \hat{\psi}^{m,n}(s)\varphi_i^m \,\mathrm{d}\bm{q}$ in the above equation we obtain that
\begin{equation*}
\int_{\mathcal{O}} M^m [\zeta(\rho^{m,n})(s+h) - \zeta(\rho^{m,n})(s)] \hat{\psi}^{m,n} \varphi_i^m \,\mathrm{d}x\,\mathrm{d}\bm{q} = \int_s^{s+h} \int_{\mathcal{O}} M^m \zeta(\rho^{m,n})\bm{v}^{m,n} \cdot \nabla_x(\hat{\psi}^{m,n}(s) \varphi^m_i) \,\mathrm{d}x\,\mathrm{d}\bm{q}\,\mathrm{d}t.
\end{equation*}
By H\"{o}lder's inequality we obtain that
\begin{equation}\label{eq135.1}
\begin{split}
&\left| \int_{\mathcal{O}} M^m [\zeta(\rho^{m,n})(s+h) - \zeta(\rho^{m,n})(s)] \hat{\psi}^{m,n} \varphi^m_i \,\mathrm{d}x\,\mathrm{d}\bm{q} \right| \\
&\leq C(M, \zeta_{\max}) h^{\frac{1}{2}} \| \bm{v}^{m,n} \|_{L^2(s,s+h; L^q(\Omega))} \| \nabla_x (\hat{\psi}^{m,n}(s) \varphi^m_i) \|_{L^{\frac{q}{q-1}}(\mathcal{O})} \\
&\leq C(M,\zeta_{\max}) h^{\frac{1}{2}} \| \bm{v}^{m,n} \|_{L^2(s,s+h; L^q(\Omega))} \bigg(\| \nabla_x \hat{\psi}^{m,n}(s) \|_{L^2(\mathcal{O})} \| \varphi^m_i \|_{L^{\frac{2q}{q-2}}(\mathcal{O})} \\
&\quad + \| \nabla_x \varphi^m_i \|_{L^2(\mathcal{O})} \| \hat{\psi}^{m,n}(s) \|_{L^{\frac{2q}{q-2}}(\mathcal{O})} \bigg),
\end{split}
\end{equation}
where $q \in \left(2, 2(K+1) \right]$ when $d=2$ and $q \in (3, 6]$ when $d=3$. Since
\begin{align*}
(\zeta(\rho^{m,n}) \hat{\psi}^{m,n})(s+h) &- (\zeta(\rho^{m,n}) \hat{\psi}^{m,n})(s) \\
&= \zeta(\rho^{m,n})(s+h) [\hat{\psi}^{m,n}(s+h) - \hat{\psi}^{m,n}(s)] + [\zeta(\rho^{m,n})(s+h) - \zeta(\rho^{m,n})(s)] \hat{\psi}^{m,n}(s),
\end{align*}
it follows from (\ref{eq132}) and (\ref{eq135.1}) that
\begin{align*}
\left| \int_{\mathcal{O}} M^m \zeta(\rho^{m,n})(s+h) [\hat{\psi}^{m,n}(s+h) - \hat{\psi}^{m,n}(s)] \varphi^m_i \,\mathrm{d}x\,\mathrm{d}\bm{q} \right| \leq C(h^{\frac{1}{2}} + h^{\frac{q-d}{2q}}) \| \nabla_{x, \bm{q}} \varphi^m_i \|_{L^2(\mathcal{O})} \\
\qquad + Ch^{\frac{1}{2}} \bigg(\| \nabla_x \hat{\psi}^{m,n}(s) \|_{L^2(\mathcal{O})} \| \varphi^m_i \|_{L^{\frac{2q}{q-2}}(\mathcal{O})} + \| \nabla_x \varphi^m_i \|_{L^2(\mathcal{O})} \| \hat{\psi}^{m,n}(s) \|_{L^{\frac{2q}{q-2}}(\mathcal{O})} \bigg),
\end{align*}
where $q \in (2, 2(K+1)]$ when $d=2$ and $q \in (3,6]$ when $d=3$. Taking $\varphi^m_i = \hat{\psi}^{m,n}(s+h) - \hat{\psi}^{m,n}(s)$ in the above inequality and integrating with respect to $s$ we obtain that
\begin{equation*}
\begin{split}
&\int_0^{T-h} \int_{\mathcal{O}} M^m \zeta(\rho^{m,n})(s+h) [\hat{\psi}^{m,n}(s+h) - \hat{\psi}^{m,n}(s)]^2 \,\mathrm{d}x\,\mathrm{d}\bm{q}\,\mathrm{d}s \\
&\leq 2C(h^{\frac{1}{2}} + h^{\frac{q-d}{2q}}) \left( \int_0^{T}  \| \nabla_{x, \bm{q}} \hat{\psi}^{m,n}(s) \|^2_{L^2(\mathcal{O})} \,\mathrm{d}s\right)^{\frac{1}{2}} \\
&\quad + 2Ch^{\frac{1}{2}} \left( \int_0^{T} \| \nabla_x \hat{\psi}^{m,n}(s) \|^2_{L^2(\mathcal{O})} \,\mathrm{d}s \right)^{\frac{1}{2}} \left(\int_0^{T} \| \hat{\psi}^{m,n}(s) \|^2_{L^{\frac{2q}{q-2}}(\mathcal{O})} \,\mathrm{d}s\right)^{\frac{1}{2}} \\
&\quad + 2Ch^{\frac{1}{2}} \left( \int_0^{T} \| \nabla_x \hat{\psi}^{m,n}(s) \|^2_{L^2(\mathcal{O})} \,\mathrm{d}s \right)^{\frac{1}{2}} \left(\int_0^{T} \| \hat{\psi}^{m,n}(s) \|^2_{L^{\frac{2q}{q-2}}(\mathcal{O})} \,\mathrm{d}s\right)^{\frac{1}{2}} \\
&\leq Ch^{\frac{q-d}{2q}},
\end{split}
\end{equation*}
where $q \in (2, 2(K+1)]$ when $d=2$ and $q \in (3,6]$ when $d=3$. The above inequality follows from H\"{o}lder's inequality, the uniform estimate (\ref{eq201}) and the Sobolev embedding $W^{1,2}(\mathcal{O}) \hookrightarrow L^p(\mathcal{O})$, $p \in \left[1,\frac{2(K+1)d}{(K+1)d - 2} \right]$. Since $\zeta(\rho^{m,n}) \geq \zeta_{\min}$ and $M^m \geq 1/m$, we have that
\begin{equation*}
\| \hat{\psi}^{m,n}(\cdot+h) - \hat{\psi}^{m,n}(\cdot) \|_{L^2(0, T-h; L^2(\mathcal{O}))} \leq Ch^\gamma,
\end{equation*}
where $0 < \gamma \leq \frac{K}{4(K+1)}$ when $d=2$ and $0 < \gamma \leq 1/8$ when $d=3$. We have therefore shown the following Nikolski\u{\i} norm estimate:
\begin{equation}\label{eq227.1}
\| \hat{\psi}^{m,n} \|_{N^\gamma_2(0,T; L^2(\mathcal{O}))} \coloneqq \sup_{0 < h < T} h^{-\gamma} \| \hat{\psi}^{m,n}(\cdot+h) - \hat{\psi}^{m,n}(\cdot) \|_{L^2(0, T-h; L^2(\mathcal{O}))} \leq C,
\end{equation}
where $0 < \gamma \leq \frac{K}{4(K+1)}$ when $d=2$ and $0 < \gamma \leq 1/8$ when $d=3$.

From (\ref{eq201}), (\ref{eq226}) and (\ref{eq227.1}) and by using the Aubin--Lions Lemma we deduce that there exists a subsequence (not relabelled) such that
\begin{align}
\label{eq208} \hat{\psi}^{m,n} &\rightharpoonup \hat{\psi}^{m} \qquad \qquad \qquad \quad \quad \text{weakly in $L^2(0,T; W^{1,2}(\mathcal{O}))$}, \\
\label{eq209} \partial_t (M^m \zeta(\rho^{m,n}) \hat{\psi}^{m,n}) &\rightharpoonup \partial_t (M^m \zeta(\rho^m) \hat{\psi}^m) \qquad \, \text{weakly in $L^2(0,T; (W^{1,2}(\mathcal{O}))')$}, \\
\label{eq210} \hat{\psi}^{m,n} &\to \hat{\psi}^m \qquad \qquad \qquad \quad \quad \text{strongly in $L^2(0,T; L^2(\mathcal{O}))$}.
\end{align}
It then follows from \eqref{eq196}, \eqref{eq210} and the global Lipschitz continuity of the mapping $s \in \mathbb{R} \mapsto [s]_+ \in \mathbb{R}_{\geq 0}$ that
\[\varrho^{m,n} \rightarrow \varrho^m \quad \text{strongly in $L^2(0,T; L^2(\Omega))$},\]
where 
\begin{align*}
\varrho^m = \zeta(\rho^m) \, \int_D M^m \, [\hat{\psi}^m]_+ \, \mathrm{d}\bm{q},
\end{align*}
and therefore, thanks to \eqref{rhomn-strong} and the assumption (\ref{eq26}) on $\mu$, the Dominated Convergence Theorem implies that
\begin{align*}
\mu(\rho^{m,n},\varrho^{m,n}) &\to \mu(\rho^m,\varrho^m) \qquad \text{strongly in $L^1(Q)$}.
\end{align*}
Hence, because $0 \leq \mu_{\min} \leq \mu(\cdot,\cdot) \leq \mu_{\max} < \infty$, it follows that
\begin{align}
\label{eq195} \mu(\rho^{m,n},\varrho^{m,n}) &\to \mu(\rho^m,\varrho^m) \qquad \text{strongly in $L^p(Q)$},
\end{align}
for any $p \in [1,\infty)$.

\par
Now the above convergence results (\ref{eq189}), (\ref{eq191}) and (\ref{eq194}) for $\rho^{m,n}$, (\ref{eq195}) for $\mu(\rho^{m,n},\varrho^{m,n})$, (\ref{eq196}) for $\zeta(\rho^{m,n})$, (\ref{eq208}), (\ref{eq209}) and (\ref{eq210}) for $\hat{\psi}^{m,n}$, (\ref{eq186}) and (\ref{eq187}) for $c^{m,n}$, (\ref{eq188}) for $\bm{v}^{m,n}$ and (\ref{eq171}) for $\tau^{m,n}$ enable us to pass to the limit $n \to \infty$ in (\ref{eq48})--(\ref{eq49}) to obtain the following:
\begin{align}
\label{eq211} &\langle \partial_t \rho^{m}, \eta \rangle - (\bm{v}^{m}\rho^{m}, \nabla_x \eta) = 0,\quad \text{for all $\eta \in C^{0,1}(\overline\Omega)$ and a.e. $t \in (0,T)$},\\
\begin{split}
\label{eq212} &\langle \partial_t (\rho^{m} \bm{v}^{m}), \bm{w}_i \rangle - (\rho^{m} \bm{v}^{m} \otimes \bm{v}^{m}, \nabla_x \bm{w}_i) + (\mu(\rho^{m},\varrho^m)D(\bm{v}^{m}), \nabla_x \bm{w}_i)\\
&\quad = -(\tau^{m}, \nabla_x \bm{w}_i) + (\rho^{m}\bm{f}^m, \bm{w}_i) \quad \text{for all $i = 1,\dots,m$ and a.e. $t \in (0,T)$}, 
\end{split}
\end{align}
and
\begin{align}
\begin{aligned}
\label{eq213} 
&\hspace{-6mm}\left\langle \partial_t(M^m \zeta(\rho^{m})\hat{\psi}^{m}), \varphi \right\rangle_{\mathcal{O}} -  \left(M^m \zeta(\rho^{m}) \bm{v}^{m} \hat{\psi}^{m}, \nabla_x \varphi \right)_{\mathcal{O}} -  \left(M \zeta(\rho^{m}) \Lambda_\ell(\hat{\psi}^{m})(\nabla_x \bm{v}^{m}) \bm{q}, \nabla_{\bm{q}} \varphi \right)_{\mathcal{O}} \\
&\quad  + (M^m \nabla_x \hat{\psi}^{m}, \nabla_x \varphi)_{\mathcal{O}} + \left( M^m \mathbb{A}(\nabla_{\bm{q}} \hat{\psi}^{m}), \nabla_{\bm{q}} \varphi \right)_{\mathcal{O}} = 0 \quad  \text{for all $\varphi \in W^{1,2}(\mathcal{O})$ and a.e. $t \in (0,T)$}.
\end{aligned}
\end{align}
It is obvious that $\bm{v}^m(x,0) = \bm{v}^m_0(x)$. We can deduce using standard calculations that
\begin{equation*}
\lim_{t \to 0+} \| \hat{\psi}^m(\cdot, t) - T_\ell(\hat{\psi}^m_0(\cdot)) \|_{L^2(\mathcal{O})} = 0.
\end{equation*}
Also, from (\ref{eq196}) we deduce that
\begin{equation}
\zeta(\rho^{m,n}) \to \zeta(\rho^m) \qquad \text{a.e. in $Q$}.
\end{equation}
Similarly, from (\ref{eq210}) we deduce that
\begin{equation*}
\hat{\psi}^{m,n} \to \hat{\psi}^m \qquad \text{a.e. in $\mathcal{O} \times (0,T)$}.
\end{equation*}
Therefore,
\begin{equation*}
\zeta(\rho^{m,n}) \hat{\psi}^{m,n} \to \zeta(\rho^m) \hat{\psi}^m \qquad \text{a.e. in $\mathcal{O} \times (0,T)$}.
\end{equation*}
Finally, the Dominated Convergence Theorem gives that
\begin{equation}\label{eq218}
\tau^{m} = -k \int_D \left[ K M \zeta(\rho^{m})T_\ell(\hat{\psi}^{m})I + \sum_{j=1}^K \zeta(\rho^{m}) T_\ell(\hat{\psi}^{m})\nabla_{\bm{q}^j} M \otimes \bm{q}^j \right] \,\mathrm{d}\bm{q} \quad \text{a.e. in $Q$}.
\end{equation}

\subsection{Passage to the limit with $m$}
%
%
%
%
Since $\bm{v}^m \in L^2(0,T; W^{1,2}(\Omega; \mathbb{R}^d)) \hookrightarrow L^1(0,T; W^{1,1}(\Omega; \mathbb{R}^d))$ and the initial data $\rho^m(0) = \rho_0^m \in C^1(\overline\Omega) \subset L^\infty(\Omega)$, we deduce from Theorem VI.1.6 in \cite{MR2986590} that there exists a unique weak solution $\rho^m \in L^\infty(\Omega \times (0,T))$ that solves (\ref{eq211}). Moreover, by Theorem VI.1.3 in \cite{MR2986590} we have that $\rho^m \in C([0,T]; L^p(\Omega))$, where $1 \leq p < \infty$. The solution satisfies
\begin{equation}\label{eq220}
\sup_{t \in (0,T)} \| \rho^m(t) \|_{L^\infty(\Omega)} \leq \| \rho_0^m \|_{L^\infty(\Omega)} \leq \rho_{\max},
\end{equation}
and $\rho^m \geq \rho_{\min}$. By the renormalization property, for any $\beta \in C^1(\mathbb{R})$, $\beta(\rho^m)$ satisfies
\begin{equation}\label{eq222a}
\frac{\partial \beta(\rho^{m})}{\partial t} + \divergence_x (\beta(\rho^{m}) \bm{v}^{m}) = 0
\end{equation}
in the distributional sense. We note that $\zeta(\rho^{m})$ is a renormalized solution in the sense that \eqref{eq222a} is satisfied with $\beta=\zeta$.

We would now like to show that $\hat{\psi}^m \geq 0$ a.e. in $\mathcal{O} \times (0,T)$. Before doing so, we shall revisit the question of regularity of $\rho^m$ and will show that, thanks to the regularity of $\bm{v}^m$ and $\rho_0^m$, in fact $\partial_t \rho^m \in L^\infty(Q)$ and $\nabla_x \rho^m \in L^\infty(Q)$, and therefore also $\partial_t \zeta(\rho^m) \in L^\infty(Q)$ and $\nabla_x \zeta(\rho^m) \in L^\infty(Q)$, whereby the renormalized equation \eqref{eq222a} does in fact hold in the sense that 
\begin{align}\label{eq222}
 \partial_t \zeta(\rho^m) + \nabla_x \cdot (\bm{v}^m\zeta(\rho^m)) = 0 \qquad \mbox{a.e. on $Q$}.
 \end{align}

To this end we note that after passing to the limit $n \rightarrow \infty$ in \eqref{eq-vmn}, 
\begin{align}\label{eq-vmm}
\bm{v}^{m}(x,t) \coloneqq \sum_{i=1}^m c^{m}_i(t)\, \bm{w}_i(x),
\end{align}
where $c^{m,n}_i \rightharpoonup c_i^m \in W^{1,\infty}((0,T))$ for each $m \geq 1$ as $n \rightarrow \infty$ by \eqref{eq186}
and $\bm{w}_i \in C^1(\overline\Omega;\mathbb{R}^d)$ for each $i=1,\ldots,m$. Thanks to the orthonormality of $(\bm{w}_i)_{i=1}^m$ in $L^2(\Omega;\mathbb{R}^d)$ it therefore follows that
\[ \|\bm{v}^m(\cdot)\|_{L^2(\Omega)} = \left(\sum_{i=1}^m|c_i^m(\cdot)|^2\right)^{\frac{1}{2}} \in W^{1,\infty}((0,T)).\]
For $x \in \overline\Omega$ and $t \in [0,T]$, consider the following initial-value problem for a system of ordinary differential equations:
\begin{align}\label{eq-ode}
\begin{aligned}
\frac{\mathrm{d}X^m}{\mathrm{d}s}(x,t;s) &= \bm{v}^m(X^m(x,t;s),s),\qquad s \in [0,T],\\
X^m(x,t;t) &=x.
\end{aligned}
\end{align}
By the Cauchy--Lipschitz theorem, whose assumptions are satisfied thanks to the regularity properties of $c_i^m$ and $\bm{w}_i$, $i=1,\ldots,m$, we deduce that, for each $(x,t) \in \overline \Omega \times [0,T]$, the system \eqref{eq-ode} has a unique solution $s \in [0,T] \mapsto X^m(x,t;s) \in C^1([0,T];\mathbb{R}^d)$. Furthermore, thanks to the specific form of \eqref{eq-vmm} and the fact that $\bm{w}_i \in C^1(\overline\Omega;\mathbb{R}^n)$ for $i=1,\ldots,m$, it follows that $X^m$ is a differentiable function of $x$, and, for every $x \in \overline{\Omega}$ and $t \in [0,T]$, 
\begin{align*} 
\frac{\mathrm{d}}{\mathrm{d}s} (\nabla_x X^m(x,t;s)) &= \nabla_x \bm{v}^m(X^m(x,t;s),s)\, \nabla_x X^m(x,t;s), \qquad s \in [0,T],\\
\nabla_x X^m(x,t;t) &= I,
\end{align*}
where $I$ is the $d \times d$ identity matrix. Thus, upon integration and recalling the form of $\bm{v}^m$ from \eqref{eq-vmm}, we deduce that
\begin{align*} 
\nabla_x X^m(x,t;s) &= \nabla_x X^m(x,t;t)\\
&\quad + \int_t^s \sum_{i=1}^m c_i^m(\sigma)\, (\nabla_x \bm{w}_i)(X^m(x,t;\sigma))\, \nabla_x X^m(x,t;\sigma)\, \mathrm{d}\sigma.
\end{align*}
By Liouville's formula for matrix-differential equations, and recalling that the functions $\bm{w}_i$, $i=1,2,\ldots$ are divergence-free, whereby the matrix $\nabla_x \bm{v}^m$ is trace-free, we have that
\[ \mathrm{det}\left(\nabla_x X^m(x,t;s) \right)
= \mathrm{det}\left(\nabla_x X^m(x,t;t)\right)
\mathrm{exp}\left(\int_t^s \mathrm{tr}[(\nabla_x \bm{v}^m)(X^m(x,t;\sigma),\sigma)]\,\mathrm{d}\sigma\right) = \mathrm{det}(I)\, \mathrm{exp}(0)=1\]
for all $x \in \overline{\Omega}$ and all $t,s \in [0,T]$. Thus, the mapping $x \in \overline\Omega \mapsto X^m(x,t;s) \in \overline\Omega$ has a nonvanishing Jacobian for all $x \in \overline{\Omega}$ and all $t,s \in [0,T]$
and is therefore an invertible $C^1$ bijection from $\overline\Omega$ onto $\overline\Omega$, with $C^1$ inverse $y \in \overline\Omega \mapsto X^m(y;s;t)$ for all $t,s \in [0,T]$. In what follows we require a bound on $\nabla_x X^m$.

To this end, let $\|\cdot\|$ be a matrix norm on $\mathbb{R}^{d \times d}$ induced by a vector norm on $\mathbb{R}^d$. Then, $\|I\|=1$ and therefore, because $X^m(x,t;t)=I$, also
\begin{align*} 
\|\nabla_x X^m(x,t;s)\| &\leq  1 + \left|\int_t^s \sum_{i=1}^m |c_i^m(\sigma)|\, \|(\nabla_x \bm{w}_i)(X^m(x,t;\sigma))\|\, \|\nabla_x X^m(x,t;\sigma)\|\, \mathrm{d}\sigma \right|\\
&\leq  1 + \left|\int_t^s \sum_{i=1}^m |c_i^m(\sigma)|\, \|\nabla_x \bm{w}_i\|_{C(\overline\Omega)}\, \|\nabla_x X^m(x,t;\sigma)\|\, \mathrm{d}\sigma \right|\\
&\leq  1 + \left|\int_t^s \left(\sum_{i=1}^m |c_i^m(\sigma)|^2 \right)^{\frac{1}{2}}\, \left(\sum_{i=1}^m \|\nabla_x \bm{w}_i\|^2_{C(\overline\Omega)}\right)^{\frac{1}{2}}\, \|\nabla_x X^m(x,t;\sigma)\|\, \mathrm{d}\sigma \right|\\
& =  1 + \left(\sum_{i=1}^m \|\nabla_x \bm{w}_i\|^2_{C(\overline\Omega)}\right)^{\frac{1}{2}} \left|\int_t^s \|\bm{v}^m(\sigma)\|_{L^2(\Omega)}\, \|\nabla_x X^m(x,t;\sigma)\|\, \mathrm{d}\sigma \right|
\\
& \leq   1 + \left(\sum_{i=1}^m \|\nabla_x \bm{w}_i\|^2_{C(\overline\Omega)}\right)^{\frac{1}{2}} \|\bm{v}^m\|_{L^\infty(0,T;L^2(\Omega))}\,\left|\int_t^s  \|\nabla_x X^m(x,t;\sigma)\|\, \mathrm{d}\sigma \right|.
\end{align*}
Thus, by Gronwall's lemma, we have that
\[ \|\nabla_x X^m(x,t;s)\| \leq \mathrm{exp}\left(|s-t| \left(\sum_{i=1}^m \|\nabla_x \bm{w}_i\|^2_{C(\overline\Omega)}\right)^{\frac{1}{2}} \|\bm{v}^m \|_{L^\infty(0,T;L^2(\Omega))}\right).\]
Consequently,
\[ \max_{x \in \overline\Omega,\, t,s \in [0,T]}\|\nabla_x X^m(x,t;s)\| \leq C_m:= \mathrm{exp}\left(T\left(\sum_{i=1}^m \|\nabla_x \bm{w}_i\|^2_{C(\overline\Omega)}\right)^{\frac{1}{2}} \|\bm{v}^m \|_{L^\infty(0,T;L^2(\Omega))}\right).\]
We shall show that the unique weak solution $\rho^m \in L^\infty(\Omega \times (0,T))$
of (\ref{eq211}) is given by the expression
\[\rho^m(x,t) = \rho_0^m(X^m(x,t;0)), \qquad x \in \overline\Omega,\, t \in [0,T].\]
Indeed, for any $\varphi \in \mathcal{D}(\Omega):=C^\infty_0(\Omega)$, we have as equalities in $\mathcal{D}'(0,T)$, that
\begin{align*}
&~_{~~_{\mathcal{D}'(\Omega)}\!\!}\left \langle \partial_t \rho^m(\cdot,t), \varphi \right\rangle _{_{\!\mathcal{D}(\Omega)}} = \frac{\mathrm{d}}{\mathrm{d}t}\hspace{-3mm} ~_{~~_{\mathcal{D}'(\Omega)}\!\!}\left \langle \rho^m(\cdot,t), \varphi \right\rangle _{_{\!\mathcal{D}(\Omega)}}\\ 
&\qquad\qquad= \frac{\mathrm{d}}{\mathrm{d}t} \int_\Omega \rho^m(x,t)\,\varphi(x)\,\mathrm{d}x = \frac{\mathrm{d}}{\mathrm{d}t} \int_\Omega \rho^m_0(X^m(x,t;0))\,\varphi(x)\,\mathrm{d}x\\
&\qquad\qquad=\frac{\mathrm{d}}{\mathrm{d}t} \int_\Omega \rho^m_0(y)\,\varphi(X^m(y,0;t))\,
\mathrm{det}[(\nabla_y X^m)(y,0;t)]\,\mathrm{d}y\\
&\qquad\qquad=\frac{\mathrm{d}}{\mathrm{d}t} \int_\Omega \rho^m_0(y)\,\varphi(X^m(y,0;t))\,\mathrm{d}y = \int_\Omega \rho^m_0(y)\,(\nabla_x \varphi)(X^m(y,0;t))\cdot \frac{\mathrm{d}X^m}{\mathrm{d}t}(y,0;t)\,\mathrm{d}y.
\end{align*}
Hence,
\begin{align*}
&~_{~~_{\mathcal{D}'(\Omega)}\!\!}\left \langle \partial_t \rho^m, \varphi \right\rangle _{_{\!\mathcal{D}(\Omega)}} = \int_\Omega \rho^m_0(y)\,(\nabla_x \varphi)(X^m(y,0;t))\cdot \bm{v}^m(X^m(y,0;t))\,\mathrm{d}y\\
&\qquad\qquad  =\int_\Omega \rho^m_0(X^m(x,t,0))\,\nabla_x \varphi(x)\cdot\bm{v}^m(x)\,\mathrm{det}[(\nabla_x X^m)(x,t;0)]\,\mathrm{d}x\\
&\qquad\qquad=\int_\Omega \rho^m_0(X^m(x,t,0))\,\nabla_x \varphi(x)\cdot \bm{v}^m(x)\,\mathrm{d}x\\
&\qquad\qquad=\int_\Omega \rho^m(x,t)\,\nabla_x \varphi(x)\cdot \bm{v}^m(x)\,\mathrm{d}x\\
&\qquad\qquad=\int_\Omega \rho^m(x,t)\,\bm{v}^m(x) \cdot \nabla_x \varphi(x)\,\mathrm{d}x\\ 
&\qquad\qquad = \!\!\!\!\!\!~_{~~_{\mathcal{D}'(\Omega)}\!\!}\left \langle 
\rho^m(\cdot,t)\,\bm{v}^m ,  \nabla_x \varphi 
\right\rangle _{_{\!\mathcal{D}(\Omega)}}.
\end{align*}
Thus, 
\[~_{~~_{\mathcal{D}'(\Omega)}\!\!}\left \langle \partial_t \rho^m, \varphi \right\rangle _{_{\!\mathcal{D}(\Omega)}} =   \!\!\!\!\!\!~_{~~_{\mathcal{D}'(\Omega)}\!\!}\left \langle 
\rho^m(\cdot,t)\,\bm{v}^m ,  \nabla_x \varphi 
\right\rangle _{_{\!\mathcal{D}(\Omega)}} =  \!\!\!\!\!\!~_{~~_{\mathcal{D}'(\Omega)}\!\!}\left \langle -
\nabla_x \cdot (\rho^m(\cdot,t)\,\bm{v}^m) , \varphi 
\right\rangle _{_{\!\mathcal{D}(\Omega)}},\]
as equalities in $\mathcal{D}'(0,T)$. Thus we have shown that $\rho^m$ defined by 
$\rho^m(x,t) = \rho_0^m(X^m(x,t;0))$ for $x \in \overline\Omega$ and $t \in [0,T]$ satisfies the equation 
\[ \frac{\partial\rho^m}{\partial t} + \nabla_x \cdot (\bm{v}^m \rho^m) = 0\]
in $\mathcal{D}'(\Omega \times (0,T))$; in addition $\rho^m(x,0)=\rho_0^m(x)$ for all $x \in \Omega$. By the uniqueness of the weak solution to the
problem (\ref{eq211}) asserted by Theorem VI.1.3 in \cite{MR2986590}, it follows that the weak solution in question is given by $\rho^m(x,t) = \rho_0^m(X^m(x,t;0))$ for $x \in \overline\Omega$ and $t \in [0,T]$.

Using this representation of $\rho^m$ we are now in a position to show that, in addition to the regularity $\rho^m \in C([0,T]; L^p(\Omega))$, where $1 \leq p < \infty$, guaranteed by Theorem VI.1.3 in \cite{MR2986590}, in our case, thanks to the regularity of $\bm{v}^m$ and $\rho_0^m$, $\rho^m$ has additional regularity. 

Indeed, because $\rho_0^m \in C^1(\overline\Omega)$ and the mapping $x \in \overline \Omega \mapsto X^m(x,t;0) \in \overline\Omega$ is a $C^1$ mapping, it follows by the chain rule that 
\[ \nabla_x \rho^m(x,t) =\nabla_x\, [\rho_0^m (X^m(x,t;0))] =  [(\nabla_x X^m)(x,t;0)]^{\rm T}\, (\nabla \rho_0^m)(X^m(x,t;0))\]
for all $x \in \overline\Omega$ and all $t \in [0,T]$,  whereby
\[ \|\nabla_x \rho^m\|_{L^\infty(Q)} \leq C_m \|\nabla \rho_0^m\|_{L^\infty(\Omega)} \leq C(m).\]
As $\bm{v}^m \in W^{1,\infty}(Q;\mathbb{R}^d)$, it then also follows that
\[ \left\|\frac{\partial \rho^m}{\partial t}\right\|_{L^\infty(Q)} \leq \|\bm{v}^m\|_{L^\infty(Q)} \|\nabla_x \rho^m\|_{L^\infty(Q)} \leq 
C_m \|\nabla_x \rho^m_0\|_{L^\infty(\Omega)} \|\bm{v}^m\|_{L^\infty(Q)} \leq C(m).\]
With these bounds it follows that 
\[ \frac{\partial \rho^m}{\partial t} + \nabla_x\cdot  (\bm{v}^m \rho^m) = \frac{\partial \rho^m}{\partial t} + \bm{v}^m \cdot \nabla_x \rho^m = 0\qquad \mbox{a.e. in $Q$}, \] 
and $\rho^m(x,0)=\rho_0^m(x)$ for all $x \in \Omega$. Hence, the renormalized equation also holds in the sense that
\begin{align}\label{eq-renorm-new} 
\frac{\partial \beta(\rho^m)}{\partial t} + \nabla_x\cdot  (\bm{v}^m \beta(\rho^m)) = \frac{\partial \beta(\rho^m)}{\partial t} + \bm{v}^m \cdot \nabla_x \beta(\rho^m) = 0\qquad \mbox{a.e. in $Q$}, 
\end{align} 
for any $\beta \in C^1(\mathbb{R})$,
and $\beta(\rho^m(x,0))=\beta(\rho_0^m(x))$
for all $x \in \Omega$.

We are now ready to return to the proof of the nonnegativity of $\hat\psi^m$. By following a similar procedure as in (\ref{eq178})--(\ref{eq200}), but now using the renomalization property in the form \eqref{eq-renorm-new}, and
bearing in mind that 
\begin{align}\label{eq-good} 
\nabla_x \rho^m \in L^\infty(Q) \quad \mbox{and} \quad \frac{\partial \rho^m}{\partial t}  \in L^\infty(Q),
\end{align}
we obtain that
\begin{align*}
\frac{\mathrm{d}}{\,\mathrm{d}t} \int_{\mathcal{O}} M^m \zeta(\rho^{m,n}) ((\hat{\psi}^{m})_-)^2 \,\mathrm{d}x\,\mathrm{d}\bm{q} + \int_{\mathcal{O}} M^m |\nabla_{x,\bm{q}} (\hat{\psi}^{m})_-|^2 \,\mathrm{d}x\,\mathrm{d}\bm{q} \\
\leq C(m,\ell) \int_{\mathcal{O}} M^m \zeta(\rho^{m,n}) ((\hat{\psi}^{m})_-)^2 \,\mathrm{d}x\,\mathrm{d}\bm{q}.
\end{align*}


Since $\hat{\psi}^m(0) = T_\ell(\hat{\psi}^m_0) \geq 0$ and $\zeta(\cdot) \geq \zeta_{\min} > 0$, Gronwall's inequality implies that $(\hat{\psi}^m)_- \equiv 0$ in $\mathcal{O} \times (0,T)$. Thus we deduce that
\begin{equation*}
\hat{\psi}^m \geq 0 \quad \text{a.e. in $\mathcal{O} \times (0,T)$}.
\end{equation*}
\par
Next, we will derive $m$-independent estimates for $\bm{v}^m$ and $\hat{\psi}^m$. We define
\begin{align*}
&\mathcal{F}_{\delta}(s) \coloneqq (s + \delta) \log(s + \delta) + 1, \quad \mathcal{F}(s) \coloneqq s \log s + 1, \\
&T_{\delta, \ell}(s) \coloneqq \int_0^s \frac{\Lambda_{\ell}(t)}{t + \delta} \,\mathrm{d}t = \int_0^s \frac{t \Gamma_\ell(t)}{t + \delta} \,\mathrm{d}t.
\end{align*}
We set $\varphi \coloneqq \log(\hat{\psi}^m + \delta) + 1$ in (\ref{eq213}), where $\delta > 0$ is arbitrary, to obtain the following identity:
\begin{equation}\label{eq227}
\begin{split}
&\frac{\mathrm{d}}{\,\mathrm{d}t} \int_{\mathcal{O}} M^m \zeta(\rho^m) \mathcal{F}_{\delta}(\hat{\psi}^m) \,\mathrm{d}x\,\mathrm{d}\bm{q} - \int_{\mathcal{O}} M^m \partial_t \zeta(\rho^m) (\delta \log(\hat{\psi}^m + \delta) + 1 - \hat{\psi}^m) \,\mathrm{d}x\,\mathrm{d}\bm{q} \\
&\quad- \int_{\mathcal{O}} M^m \zeta(\rho^m) \frac{\hat{\psi}^m}{\hat{\psi}^m + \delta} (\bm{v}^m \cdot \nabla_x \hat{\psi}^m) \,\mathrm{d}x\,\mathrm{d}\bm{q} + \int_{\mathcal{O}} \frac{M^m}{\hat{\psi}^m + \delta} \nabla_x \hat{\psi}^m : \nabla_x \hat{\psi}^m \,\mathrm{d}x\,\mathrm{d}\bm{q} \\
&\quad+ \int_{\mathcal{O}} \frac{M^m}{\hat{\psi}^m + \delta} \mathbb{A}(\nabla_{\bm{q}} \hat{\psi}^m) : \nabla_{\bm{q}} \hat{\psi}^m \,\mathrm{d}x\,\mathrm{d}\bm{q} = \left( M\zeta(\rho^m) (\nabla_x \bm{v}^m) \bm{q}, \nabla_{\bm{q}} T_{\delta, \ell}(\hat{\psi}^m) \right)_{\mathcal{O}}.
\end{split}
\end{equation}
Since $\hat{\psi}^m \in W^{1,2}(\mathcal{O})$, we can test the equation (\ref{eq222})  with the function
\begin{equation*}
\eta = \int_D M^m (\delta \log(\hat{\psi}^m + \delta) + 1 - \hat{\psi}^m) \,\mathrm{d}\bm{q}, \quad \nabla_x \eta = - \int_D \frac{M^m \hat{\psi}^m \nabla_x \hat{\psi}^m}{\hat{\psi}^m + \delta} \,\mathrm{d}\bm{q}
\end{equation*}
to deduce that the second term and the third term in (\ref{eq227}) add up to $0$. Next, we integrate the resulting equation with respect to time over $(0,t)$ and use the assumption (\ref{eq1.14}), to obtain that
\begin{equation*}
\begin{split}
&\int_{\mathcal{O}} M^m \zeta(\rho^m(\cdot, t)) \mathcal{F}_{\delta}(\hat{\psi}^m(\cdot,t)) \,\mathrm{d}x\,\mathrm{d}\bm{q} + C_1\int_0^t \int_{\mathcal{O}} \frac{M^m}{\hat{\psi}^m + \delta} |\nabla_{x,\bm{q}} \hat{\psi}^m|^2 \,\mathrm{d}x\,\mathrm{d}\bm{q}\,\mathrm{d}s \\
&\quad\leq \int_{\mathcal{O}} M^m \zeta(\rho_0^m) \mathcal{F}_{\delta}(T_\ell(\hat{\psi}^m_0)) \,\mathrm{d}x\,\mathrm{d}\bm{q} + \int_0^t \left( M \zeta(\rho^m) (\nabla_x \bm{v}^m) \bm{q}, \nabla_{\bm{q}} T_{\delta, \ell}(\hat{\psi}^m) \right)_{\mathcal{O}} \,\mathrm{d}s.
\end{split}
\end{equation*}
Taking the limit $\delta \to 0_+$ in the above inequality, the first term on the left-hand side and the first-term on the right-hand side can be easily dealt with. For the second term on the left-hand side we apply the Monotone Convergence Theorem. Therefore we get
\begin{equation}\label{eq230}
\begin{split}
&\int_{\mathcal{O}} M^m \zeta(\rho^m(\cdot, t)) \mathcal{F}(\hat{\psi}^m(\cdot,t)) \,\mathrm{d}x\,\mathrm{d}\bm{q} + 4C_1\int_0^t \int_{\mathcal{O}} M^m \left| \nabla_{x,\bm{q}} \sqrt{\hat{\psi}^m} \right|^2 \,\mathrm{d}x\,\mathrm{d}\bm{q}\,\mathrm{d}s \\
&\quad \leq \int_{\mathcal{O}} M^m \zeta(\rho_0^m) \mathcal{F}(T_\ell(\hat{\psi}^m_0)) \,\mathrm{d}x\,\mathrm{d}\bm{q} + \limsup_{\delta \to 0+} \int_0^t \left( M \zeta(\rho^m) (\nabla_x \bm{v}^m) \bm{q}, \nabla_{\bm{q}} T_{\delta, \ell}(\hat{\psi}^m) \right)_{\mathcal{O}} \,\mathrm{d}s.
\end{split}
\end{equation}
For the last term on the right-hand side we use integration by parts (the boundary term vanishes since $M = 0$ on $\partial D$) and note that $\divergence_x \bm{v}^m = 0$. We obtain
\begin{equation}\label{eq231a}
\begin{split}
\int_0^t \left( M \zeta(\rho^m) (\nabla_x \bm{v}^m) \bm{q}, \nabla_{\bm{q}} T_{\delta, \ell}(\hat{\psi}^m) \right)_{\mathcal{O}} \,\mathrm{d}s &= -\int_0^t \left( \divergence_{\bm{q}} \left(M \zeta(\rho^m)(\nabla_x \bm{v}^m) \bm{q}\right), T_{\delta, \ell}(\hat{\psi}^m) \right)_{\mathcal{O}} \,\mathrm{d}s \\
&= - \sum_{j=1}^K \int_0^t \left( \zeta(\rho^m) (\nabla_x \bm{v}^m), T_{\delta, \ell}(\hat{\psi}^m) \nabla_{\bm{q}^j} M \otimes \bm{q}^j \right)_{\mathcal{O}} \,\mathrm{d}s.
\end{split}
\end{equation}
Since $|T_{\delta,\ell}(s)- T_\ell(s)| \leq \delta \log\left(1 + \frac{\ell}{\delta}\right)$ for all $s \in [0,\infty)$, using \eqref{eq231a} in \eqref{eq230} we can pass to the limit in (\ref{eq230}) to obtain
\begin{equation}\label{eq232}
\begin{split}
&\int_{\mathcal{O}} M^m \zeta(\rho^m(\cdot, t)) \mathcal{F}(\hat{\psi}^m(\cdot,t)) \,\mathrm{d}x\,\mathrm{d}\bm{q} + 4C_1\int_0^t \int_{\mathcal{O}} M^m \left| \nabla_{x,\bm{q}} \sqrt{\hat{\psi}^m} \right|^2 \,\mathrm{d}x\,\mathrm{d}\bm{q}\,\mathrm{d}s \\
&\quad \leq \int_{\mathcal{O}} M^m \zeta(\rho_0^m) \mathcal{F}(T_\ell(\hat{\psi}^m_0)) \,\mathrm{d}x\,\mathrm{d}\bm{q} - \sum_{j=1}^K \int_0^t \left( \zeta(\rho^m) (\nabla_x \bm{v}^m), T_{\ell}(\hat{\psi}^m) \nabla_{\bm{q}^j} M \otimes \bm{q}^j \right)_{\mathcal{O}} \,\mathrm{d}s.
\end{split}
\end{equation}
\par
We multiply the $i$-th equation in (\ref{eq212}) by $c_i^m(t)$ and sum with respect to $i = 1,\dots,m$ to deduce the following energy identity:
\begin{equation}\label{eq233}
\frac{1}{2}\frac{\mathrm{d}}{\,\mathrm{d}t} \int_\Omega \rho^{m}|\bm{v}^{m}|^2 \,\mathrm{d}x + \int_\Omega \mu(\rho^{m},\varrho^m) |D(\bm{v}^{m})|^2 \,\mathrm{d}x = - \int_\Omega \tau^{m} : \nabla_x \bm{v}^m \,\mathrm{d}x+ (\rho^{m} \bm{f}^m, \bm{v}^{m}).
\end{equation}
Using $\divergence_x \bm{v}^m = 0$ and (\ref{eq218}) we have that
\begin{equation*}
\int_\Omega \tau^m : \nabla_x \bm{v}^m \,\mathrm{d}x = -k \sum_{j=1}^K \left( \zeta(\rho^m) (\nabla_x \bm{v}^m), T_{\ell}(\hat{\psi}^m) \nabla_{\bm{q}^j} M \otimes \bm{q}^j \right)_{\mathcal{O}}.
\end{equation*}
Integrating (\ref{eq233}) with respect to time over $(0,t)$ and multiplying (\ref{eq232}) by $k$ and adding the results  we get
\begin{equation}\label{eq255}
\begin{split}
&k\int_{\mathcal{O}} M^m \zeta(\rho^m(\cdot, t)) \mathcal{F}(\hat{\psi}^m(\cdot,t)) \,\mathrm{d}x\,\mathrm{d}\bm{q} + \frac{1}{2}\int_\Omega \rho^m(\cdot,t) |\bm{v}^m(\cdot,t)|^2 \,\mathrm{d}x \\
&\quad + \int_0^t \int_\Omega \mu(\rho^{m},\varrho^{m}) |D(\bm{v}^{m})|^2 \,\mathrm{d}x\,\mathrm{d}s + 4kC_1\int_0^t \int_{\mathcal{O}} M^m \left| \nabla_{x,\bm{q}} \sqrt{\hat{\psi}^m} \right|^2 \,\mathrm{d}x\,\mathrm{d}\bm{q}\,\mathrm{d}s \\
&\leq k\int_{\mathcal{O}} M^m \zeta(\rho_0^m) \mathcal{F}(T_\ell(\hat{\psi}^m_0)) \,\mathrm{d}x\,\mathrm{d}\bm{q} + \frac{1}{2}\int_\Omega \rho_0^m |\bm{v}_0|^2 \,\mathrm{d}x + \int_0^t \int_\Omega \rho^m \bm{f}^m \cdot \bm{v}^m \,\mathrm{d}x \,\mathrm{d}s.
\end{split}
\end{equation}
Using the assumption (\ref{eq26}), Korn's inequality (\ref{eq1.26}) and Gronwall's inequality we arrive at the following uniform estimate:
\begin{equation}\label{eq236}
\begin{split}
&k\zeta_{\min} \sup_{t \in (0,T)} \| M^m \mathcal{F}(\hat{\psi}^m(\cdot,t)) \|_{L^1(\mathcal{O})} + \frac{\rho_{\min}}{2} \sup_{t \in (0,T)} \| \bm{v}^m(\cdot, t) \|^2_{L^2(\Omega; \mathbb{R}^d)} \\
&\quad + \frac{\mu_{\min}c_0}{2} \int_0^T \| \bm{v}^m \|^2_{W^{1,2}(\Omega; \mathbb{R}^d)}\,\mathrm{d}t + 4kC_1\int_0^T \int_{\mathcal{O}} M^m \left| \nabla_{x,\bm{q}} \sqrt{\hat{\psi}^m} \right|^2 \,\mathrm{d}x\,\mathrm{d}\bm{q}\,\mathrm{d}t \\
&\leq k\zeta_{\max} \int_{\mathcal{O}} M^m 
\mathcal{F}(T_\ell(\hat{\psi}^m_0)) \,\mathrm{d}x\,\mathrm{d}\bm{q} 
+ \frac{\rho_{\max}}{2} \| \bm{v}_0 \|^2_{L^2(\Omega; \mathbb{R}^d)} 
+ \frac{\rho_{\max}^2}{2\mu_{\min}c_0} \int_0^T \| \bm{f}^m \|^2_{L^2(\Omega; \mathbb{R}^d)} \,\mathrm{d}t \\
&\leq C(\ell, \zeta_{\max}, \rho_{\max}, \hat{\psi}_0, \bm{v}_0, \bm{f}).
\end{split}
\end{equation}
By interpolation we have the following estimate for $\bm{v}^m$:
\begin{equation}\label{eq237}
\| \bm{v}^m \|_{L^{\frac{2(d+2)}{d}}(Q; \mathbb{R}^d)} \leq C(\ell).
\end{equation} 
It directly follows from (\ref{eq26}), (\ref{eq212}), (\ref{eq236}) and (\ref{eq237}) that
\begin{equation}\label{eq258}
\int_0^T \| \partial_t (\rho^m \bm{v}^m) \|^2_{W^{-1,2}(\Omega; \mathbb{R}^d)} \,\mathrm{d}t \leq C(\ell).
\end{equation}
By a similar calculation as in (\ref{eq108}) we have that
\begin{equation}\label{eq259.1}
\left\| \partial_t \rho^{m}\right\|_{L^2(0,T; W^{-1,p}(\Omega))} \leq C(\ell), \\
\end{equation}
where $p \in [1,\infty)$ when $d=2$ and $p \in [1,6]$ when $d=3$. Thanks to the presence of the truncation function $T_\ell(\cdot)$ we have that
\begin{equation}\label{eq244}
|\tau^{m}| \leq C(\ell).
\end{equation}

Next, we shall focus on the fractional time derivative of $\bm{v}^m$. Integrating (\ref{eq212}) with respect to time over $(s, s+h)$, with $s < T-h$, we have that
\begin{equation}\label{eq1.216}
\begin{split}
\int_\Omega [(\rho^m\bm{v}^m)(s+h) - (\rho^m \bm{v}^m)(s)] \cdot \bm{\omega}_i \,\mathrm{d}x &= \int_s^{s+h} \int_\Omega \rho^m \bm{v}^m \otimes \bm{v}^m : \nabla_x \bm{\omega}_i \,\mathrm{d}x\,\mathrm{d}t \\
&\quad - \int_s^{s+h} \int_\Omega \mu(\rho^m,\varrho^m) D(\bm{v}^m):D(\bm{\omega}_i) \,\mathrm{d}x\,\mathrm{d}t \\
&\quad - \int_s^{s+h} \int_\Omega \tau^m : \nabla_x \bm{\omega}_i \,\mathrm{d}x\,\mathrm{d}t + \int_s^{s+h} \rho^m \bm{f}^m \cdot \bm{\omega}_i \,\mathrm{d}x\,\mathrm{d}t \\
&\eqqcolon I_1 + I_2 + I_3 + I_4.
\end{split}
\end{equation}
For $I_1$, we have by H\"{o}lder's inequality, the Gagliardo--Nirenberg inequality (\ref{eq1.25}) and Korn's inequality (\ref{eq1.26}) that 
\begin{equation*}
\begin{split}
|I_1| &\leq \rho_{\max} \left(\int_s^{s+h} \| \bm{v}^m \|_{L^q(\Omega; \mathbb{R}^d)} \| \bm{v}^m \|_{L^{\frac{2q}{q-2}}(\Omega; \mathbb{R}^d)} \,\mathrm{d}t \right) \| \nabla_x \bm{\omega}_i \|_{L^2(\Omega; \mathbb{R}^{d \times d})} \\
&\leq C\rho_{\max}\left( \int_s^{s+h} \| \bm{v}^m \|_{L^q(\Omega;\mathbb{R}^d)} \| \bm{v}^m \|^{1 - \frac{\mathrm{d}}{q}}_{L^2(\Omega; \mathbb{R}^d)} \| \bm{v}^m \|^{\frac{\mathrm{d}}{q}}_{W^{1,2}(\Omega; \mathbb{R}^d)} \,\mathrm{d}t \right)  \| \nabla_x \bm{\omega}_i \|_{L^2(\Omega; \mathbb{R}^{d \times d})} \\
&\leq C\rho_{\max}  \| \nabla_x \bm{\omega}_i \|_{L^2(\Omega; \mathbb{R}^{d \times d})} \| \bm{v}^m \|^{1 - \frac{\mathrm{d}}{q}}_{L^\infty(0,T; L^2(\Omega; \mathbb{R}^d))} \left( \int_s^{s+h} \| \bm{v}^m \|^2_{W^{1,2}(\Omega; \mathbb{R}^d)} \,\mathrm{d}t \right)^{\frac{\mathrm{d}}{2q}} \\
&\quad \times \left( \int_s^{s+h} \| \bm{v}^m \|^{\frac{2q}{2q-d}}_{L^q(\Omega; \mathbb{R}^d)} \,\mathrm{d}t \right)^{\frac{2q-d}{2q}} \\
&\leq C\rho_{\max}  \| \nabla_x \bm{\omega}_i \|_{L^2(\Omega; \mathbb{R}^{d \times d})} \| \bm{v}^m \|^{1 - \frac{\mathrm{d}}{q}}_{L^\infty(0,T; L^2(\Omega; \mathbb{R}^d))} \| \bm{v}^m \|^{\frac{\mathrm{d}}{q}}_{L^2(s,s+h; W^{1,2}(\Omega; \mathbb{R}^d))} \| \bm{v}^m \|_{L^2(s, s+h; L^q(\Omega; \mathbb{R}^d))} \\
&\quad \times \left( \int_s^{s+h} 1^{\frac{2q-d}{q-d}}\,\mathrm{d}t \right)^{\frac{q-d}{2q}} \\
&\leq Ch^{\frac{q-d}{2q}} \| D(\bm{\omega}_i) \|_{L^2(\Omega; \mathbb{R}^{d \times d})},
\end{split}
\end{equation*}
where $q \in (2,\infty)$ when $d=2$ and $q \in (3,6]$ when $d=3$. For $I_2$, we have by (\ref{eq236}) that
\begin{equation*}
|I_2| \leq \mu_{\max}h^{\frac{1}{2}} \| D(\bm{v}^m) \|_{L^2(s,s+h; L^2(\Omega; \mathbb{R}^{d \times d}))} \| D(\bm{\omega}_i) \|_{L^2(\Omega; \mathbb{R}^{d \times d})} \leq Ch^{\frac{1}{2}} \| D(\bm{\omega}_i) \|_{L^2(\Omega; \mathbb{R}^{d \times d})}.
\end{equation*}
For $I_3$, we have by (\ref{eq244}) that
\begin{equation*}
|I_3| \leq Ch^{\frac{1}{2}} \| D(\bm{\omega}_i) \|_{L^2(\Omega; \mathbb{R}^{d \times d})}.
\end{equation*}
For $I_4$, we have by H\"{o}lder's inequality and Korn's inequality (\ref{eq1.26}) that
\begin{equation*}
|I_4| \leq \rho_{\max} h^{\frac{1}{2}} \| \bm{f}^m \|_{L^2(s,s+h; L^2(\Omega; \mathbb{R}^d))} \| \bm{\omega}_i \|_{L^2(\Omega; \mathbb{R}^{d \times d})} \leq Ch^{\frac{1}{2}} \| D(\bm{\omega}_i) \|_{L^2(\Omega; \mathbb{R}^{d \times d})}.
\end{equation*}
Hence,
\begin{equation}\label{eq117.1}
\left| \int_\Omega [(\rho^m\bm{v}^m)(s+h) - (\rho^m \bm{v}^m)(s)] \cdot \bm{\omega}_i \,\mathrm{d}x \right| \leq C(h^{\frac{1}{2}} + h^{\frac{q-d}{2q}}) \| D(\bm{\omega}_i) \|_{L^2(\Omega; \mathbb{R}^{d \times d})},
\end{equation}
where $q \in (2,\infty)$ when $d=2$ and $q \in (3,6]$ when $d=3$. Next, we take the test function $\eta = \chi_{[s, s+h]} (\bm{v}^m(s) \cdot \bm{\omega}_i)$ in 
\begin{equation*}
\int_0^T \int_\Omega \left( \frac{\partial \rho^m}{\partial t} \eta - \rho^m \bm{v}^m \cdot \nabla_x \eta \right) \,\mathrm{d}x\,\mathrm{d}t = 0,
\end{equation*}
to deduce that
\begin{equation*}
\int_\Omega [\rho^m(s+h) - \rho^m(s)] (\bm{v}^m(s) \cdot \bm{\omega}_i) \,\mathrm{d}x = \int_s^{s+h} \int_\Omega \rho^m \bm{v}^m \cdot \nabla_x (\bm{v}^m(s) \cdot \bm{\omega}_i) \,\mathrm{d}x\,\mathrm{d}t.
\end{equation*}
By H\"{o}lder's inequality we have that
\begin{equation}\label{eq120.1}
\begin{split}
\left| \int_\Omega [\rho^m(s+h) - \rho^m(s)] (\bm{v}^m(s) \cdot \bm{\omega}_i) \,\mathrm{d}x \right| &\leq \rho_{\max} h^{\frac{1}{2}} \| \bm{v}^m \|_{L^2(s,s+h; L^q(\Omega; \mathbb{R}^d))} \| \nabla_x (\bm{v}^m(s) \cdot \bm{\omega}_i) \|_{L^{\frac{q}{q-1}}(\Omega; \mathbb{R}^d)} \\
&\leq \rho_{\max} h^{\frac{1}{2}} \| \bm{v}^m \|_{L^2(s,s+h; L^q(\Omega; \mathbb{R}^d))} \\
&\quad \times \bigg(\| \nabla_x \bm{v}^m(s) \|_{L^2(\Omega; \mathbb{R}^{d \times d})} \| \bm{\omega}_i \|_{L^{\frac{2q}{q-2}}(\Omega; \mathbb{R}^d)} \\
&\quad \quad + \| \nabla_x \bm{\omega}_i \|_{L^2(\Omega; \mathbb{R}^{d \times d})} \| \bm{v}^m(s) \|_{L^{\frac{2q}{q-2}}(\Omega; \mathbb{R}^d)} \bigg).
\end{split}
\end{equation}
Since
\begin{equation*}
(\rho^m \bm{v}^m)(s+h) - (\rho^m \bm{v}^m)(s) = \rho^m(s+h)[\bm{v}^m(s+h) - \bm{v}^m(s)] + [\rho^m(s+h) - \rho^m(s)]\bm{v}^m(s),
\end{equation*}
it follows from (\ref{eq117.1}) and (\ref{eq120.1}) that
\begin{align*}
\begin{aligned}
&\left| \int_\Omega \rho^m(s+h)[\bm{v}^m(s+h) - \bm{v}^m(s)] \cdot \bm{\omega}_i \,\mathrm{d}x \right| \leq C(h^{\frac{1}{2}} + h^{\frac{q-d}{2q}}) \| D(\bm{\omega}_i) \|_{L^2(\Omega; \mathbb{R}^{d \times d})} \\
&\quad + Ch^{\frac{1}{2}} \bigg(\| \nabla_x \bm{v}^m(s) \|_{L^2(\Omega; \mathbb{R}^{d \times d})} \| \bm{\omega}_i \|_{L^{\frac{2q}{q-2}}(\Omega; \mathbb{R}^d)} + \| \nabla_x \bm{\omega}_i \|_{L^2(\Omega; \mathbb{R}^{d \times d})} \| \bm{v}^m(s) \|_{L^{\frac{2q}{q-2}}(\Omega; \mathbb{R}^d)} \bigg),
\end{aligned}
\end{align*}
where $q \in (2,\infty)$ when $d=2$ and $q \in (3,6]$ when $d=3$. Taking $\bm{\omega}_i = \bm{v}^m(s+h) - \bm{v}^m(s)$ in the above inequality and integrating with respect to $s$ we obtain that
\begin{equation*}
\begin{split}
&\left| \int_0^{T-h} \int_\Omega \rho^m(s+h) [\bm{v}^m(s+h) - \bm{v}^m(s)]^2 \,\mathrm{d}x\,\mathrm{d}s \right| \\
&\leq 2C(h^{\frac{1}{2}} + h^{\frac{q-d}{2q}}) \left( \int_0^{T} \| D(\bm{v}^m(s)) \|^2_{L^2(\Omega; \mathbb{R}^{d \times d})} \,\mathrm{d}s \right)^{\frac{1}{2}} \\
&\quad + 2Ch^{\frac{1}{2}} \left( \int_0^{T} \| \nabla_x \bm{v}^m(s) \|^2_{L^2(\Omega; \mathbb{R}^{d \times d})} \,\mathrm{d}s\right)^{\frac{1}{2}} \left( \int_0^{T} \| \bm{v}^m(s) \|^2_{L^{\frac{2q}{q-2}}(\Omega; \mathbb{R}^d)} \,\mathrm{d}s \right)^{\frac{1}{2}} \\
&\quad + 2Ch^{\frac{1}{2}}\left( \int_0^{T} \| \nabla_x \bm{v}^m(s) \|^2_{L^2(\Omega; \mathbb{R}^{d \times d})} \,\mathrm{d}s\right)^{\frac{1}{2}} \left( \int_0^{T} \| \bm{v}^m(s) \|^2_{L^{\frac{2q}{q-2}}(\Omega; \mathbb{R}^d)} \,\mathrm{d}s \right)^{\frac{1}{2}} \\
&\leq Ch^{\frac{q-d}{2q}},
\end{split}
\end{equation*}
where $q \in (2,\infty)$ when $d=2$ and $q \in (3,6]$ when $d=3$. The above inequality follows from H\"{o}lder's inequality, the uniform estimate (\ref{eq236}) and the Sobolev embedding $W^{1,2}(\Omega) \hookrightarrow L^p(\Omega)$, $p \in [1,\infty)$ when $d=2$ and $p \in [1,6]$ when $d=3$. Since $\rho^m \geq \rho_{\min}$, we have
\begin{equation*}
\| \bm{v}^m(\cdot+h) - \bm{v}^m(\cdot) \|_{L^2(0, T-h; L^2(\Omega; \mathbb{R}^d))} \leq Ch^\gamma,
\end{equation*}
where $0 < \gamma < 1/4$ when $d=2$ and $0 < \gamma \leq 1/8$ when $d=3$. Therefore, we obtain the following Nikolski\u{\i} norm estimate:
\begin{equation}\label{eq261}
\| \bm{v}^m \|_{N^\gamma_2(0,T; L^2(\Omega; \mathbb{R}^d))} \coloneqq \sup_{0 < h < T} h^{-\gamma} \| \bm{v}^m(\cdot+h) - \bm{v}^m(\cdot) \|_{L^2(0, T-h; L^2(\Omega; \mathbb{R}^d))} \leq C,
\end{equation}
where $0 < \gamma < 1/4$ when $d=2$ and $0 < \gamma \leq 1/8$ when $d=3$.
Using the $m$-independent estimates (\ref{eq220}), (\ref{eq236}), (\ref{eq261}), (\ref{eq237}), (\ref{eq258}), (\ref{eq259.1}) and (\ref{eq244}) and the Aubin--Lions Lemma we deduce the existence of subsequences (not relabelled) such that, as $m \to \infty$,
\begin{align}\label{eq263.0}
\rho^{m} &\rightharpoonup \rho \qquad \quad \qquad \text{weak* in $L^\infty(\Omega \times (0,T))$},\\
\label{eq263.1} \partial_t \rho^m &\rightharpoonup \partial_t \rho \qquad \qquad \, \text{weakly in $L^2(0,T; W^{-1,p}(\Omega))$}, \\
\tau^{m} &\rightharpoonup \tau \qquad \quad \qquad \text{weak* in $L^\infty(\Omega \times (0,T); \mathbb{R}^{d \times d})$}, \label{eq263.2}\\
\bm{v}^m &\rightharpoonup \bm{v} \qquad \quad \qquad \text{weak* in $L^\infty(0,T; L^2(\Omega; \mathbb{R}^d))$}, \label{eq263.3}\\
\label{eq248} \bm{v}^m &\rightharpoonup \bm{v} \qquad \quad \qquad \text{weakly in $L^2(0,T; W^{1,2}(\Omega; \mathbb{R}^d))$}, \\
\partial_t (\rho^m \bm{v}^m) &\rightharpoonup \partial_t (\rho \bm{v}) \qquad \ \, \, \, \text{weakly in $L^2(0,T; W^{-1,2}(\Omega;\mathbb{R}^d))$}, \label{eq248a}\\
\label{eq250} \bm{v}^m &\to \bm{v} \qquad \quad \qquad \text{strongly in $L^2(0,T; L^p(\Omega;\mathbb{R}^d))$},
\end{align}
where $p \in [1,\infty)$ when $d=2$ and $p \in [1,6)$ when $d=3$. By the strong convergence of $\bm{v}^m$, we can perform a similar argument as in (\ref{eq1.125})--(\ref{eq194}) to deduce that,
\begin{equation}\label{eq269.1}
\rho^{m} \to \rho \qquad \text{strongly in $L^p(\Omega \times (0,T))$}, 
\end{equation}
for any $p \in [1,\infty)$. With the convergence result (\ref{eq250}) for $\bm{v}^m$ we can perform a similar argument as in Theorem VI.1.9 in \cite{MR2986590} and strengthen the convergence above to get
\begin{equation}\label{rhom-strong}
\rho^m \to \rho \qquad \text{strongly in $C([0,T]; L^p(\Omega))$},
\end{equation}
for any $p \in [1,\infty)$. Therefore,
\begin{align}
\label{eq253} \zeta(\rho^{m}) &\to \zeta(\rho) \qquad \text{strongly in $C([0,T]; L^p(\Omega))$},
\end{align}
for any $p \in [1,\infty)$. \par
Next, we shall focus on the convergence properties of $\hat{\psi}^m$. First, we set $\varphi \equiv 1$ in (\ref{eq213}) and deduce that
\begin{equation*}
\begin{split}
0 &\leq \int_{\mathcal{O}} M^m(\bm{q})\zeta(\rho^m(x,t)) \hat{\psi}^m(x,\bm{q},t) \,\mathrm{d}x\,\mathrm{d}\bm{q} = \int_{\mathcal{O}} M^m\zeta(\rho_0^m) T_\ell(\hat{\psi}^m_0) \,\mathrm{d}x\,\mathrm{d}\bm{q} \\
&\leq \zeta_{\max} \int_{\mathcal{O}} M^m \hat{\psi}^m_0 \,\mathrm{d}x\,\mathrm{d}\bm{q} = \zeta_{\max} \int_{\mathcal{O}} M \hat{\psi}_0 \,\mathrm{d}x\,\mathrm{d}\bm{q} \leq C.
\end{split}
\end{equation*}
As $\hat\psi^m \geq 0$ a.e. on $\mathcal{O} \times (0,T)$, it follows that
\begin{equation}\label{varrho}
\varrho^m(x,t) = \zeta(\rho^{m}(x,t)) \int_D M^m(\bm{q})\, [\hat{\psi}^m(x,\bm{q},t)]_+ \,\mathrm{d}\bm{q} = \zeta(\rho^m(x,t)) \int_D M^m(\bm{q}) \hat{\psi}^m(x,\bm{q},t) \,\mathrm{d}\bm{q} \geq 0.
\end{equation}

By defining
\begin{equation}\label{rhomplus}
\lambda^m(x,t) \coloneqq \int_D M^m(\bm{q}) \hat{\psi}^m(x,\bm{q},t) \,\mathrm{d}\bm{q}
\end{equation}
and setting $\varphi(x,\bm{q},t) \coloneqq \bar{\varphi}(x,t)$ in (\ref{eq213}), we have the following equation satisfied by $\lambda^m$:
\begin{equation}\label{eq256}
\left\langle \partial_t (\zeta(\rho^m) \lambda^m), \bar{\varphi} \right\rangle - (\zeta(\rho^m) \lambda^m \bm{v}^m, \nabla_x \bar{\varphi}) + (\nabla_x \lambda^m, \nabla_x \bar{\varphi}) = 0 \quad \text{for all $\bar{\varphi} \in W^{1,2}(\Omega)$ and a.e. $t \in (0,T)$},
\end{equation}
supplemented by the initial condition $\lambda^m(0) \coloneqq \lambda^m_0$, where, thanks to \eqref{eq25},
\begin{equation*}
0 \leq \lambda^m_0(x) \coloneqq \int_D M^m T_\ell(\hat{\psi}^m_0) \,\mathrm{d}\bm{q} \leq \int_D M \hat{\psi}_0 \,\mathrm{d}\bm{q} = \frac{1}{\zeta(\rho_0)} \int_D \psi_0 \, \mathrm{d}\bm{q} \leq \frac{\varrho_{\max}}{\zeta_{\min}}.
\end{equation*}
Let $\omega \coloneqq \sup_{x \in \Omega} \lambda^m_0(x)$, test (\ref{eq222}) with the function $\eta = \omega \bar{\varphi}$, and subtract (\ref{eq256}) from the resulting equation; this gives that
\begin{equation}\label{eq256a}
\left\langle \partial_t (\zeta(\rho^m) (\omega-\lambda^m)), \bar{\varphi} \right\rangle - (\zeta(\rho^m) (\omega-\lambda^m) \bm{v}^m, \nabla_x \bar{\varphi}) + (\nabla_x (\omega-\lambda^m), \nabla_x \bar{\varphi}) = 0\qquad \forall\, \bar\varphi \in W^{1,2}(\Omega)
\end{equation}
for a.e. $t \in (0,T)$. Note that $\hat\psi^m \in L^2(0,T;W^{1,2}(\mathcal{O}))$
by \eqref{eq208}. It therefore follows from \eqref{rhomplus} that $\lambda^m \in L^2(0,T;W^{1,2}(\Omega))$. Hence, because 
$\zeta(\rho^m) \in W^{1,\infty}(Q)$, which follows from \eqref{eq-good}, we have that 
\[\zeta(\rho^m) (\omega-\lambda^m)
\in L^2(0,T;W^{1,2}(\Omega)).\]  
Furthermore, 
because $\zeta(\rho^m) (\omega-\lambda^m) \bm{v}^m \in L^2(0,T;L^2(\Omega))$ and $\nabla_x(\omega-\lambda^m) \in L^2(0,T;L^2(\Omega))$, it follows from \eqref{eq256} that 
\[ \partial_t(\zeta(\rho^m) (\omega-\lambda^m)) \in L^2(0,T;(W^{1,2}(\Omega))').\]
where $(W^{1,2}(\Omega))'$ is the dual space of $W^{1,2}(\Omega)$. In addition, since $\zeta(\rho^m) \in W^{1,\infty}(Q)$ and $\lambda^m \in L^2(0,T; W^{1,2}(\Omega))$, we have that $\varrho^m \in L^2(0,T; W^{1,2}(\Omega))$. By \eqref{eq209} and \eqref{eq-good} we also have that
$\partial_t(M^m \zeta(\rho^m)\hat\psi^m) \in L^2(0,T;(W^{1,2}(\mathcal{O}))')$, and therefore $\partial_t \varrho^m \in  L^2(0,T;(W^{1,2}(\Omega))')$. Thus, in summary, 
\begin{align}\label{eq-varrhobd} \varrho^m \in L^2(0,T;W^{1,2}(\Omega))\quad \mbox{and} \quad \partial_t \varrho^m \in L^2(0,T;(W^{1,2}(\Omega))').
\end{align}
We shall now show that $\partial_t \lambda^m \in L^2(0,T; (W^{1,2}(\Omega))')$. For any $\phi \in W^{1,2}(\Omega)$ and $\theta \in C_0^\infty((0,T))$, we have
\begin{align*}
 \int_0^T \langle \partial_t \lambda^m, \phi \rangle \, \theta \, \mathrm{d}t &= - \int_0^T\int_\Omega \lambda^m \, \phi \, \partial_t \theta \, \mathrm{d}x \, \mathrm{d}t \\
 &= - \int_0^T\int_\Omega \varrho^m \, \frac{\phi}{\zeta(\rho^m)} \, \partial_t \theta \, \mathrm{d}x \, \mathrm{d}t \\
 &= - \int_0^T \int_\Omega \varrho^m \, \phi \left[ \partial_t \left( \frac{\theta}{\zeta(\rho^m)} \right) - \theta \, \partial_t \left( \frac{1}{\zeta(\rho^m)} \right) \right] \mathrm{d}x \, \mathrm{d}t \\
 &= \int_0^T \left\langle \partial_t \varrho^m, \frac{\phi}{\zeta(\rho^m)} \right\rangle \, \theta \, \mathrm{d}t - \int_0^T \left( \int_\Omega \varrho^m \, \phi \, \frac{\zeta^\prime(\rho^m) \, \partial_t \rho^m}{\zeta(\rho^m)^2} \, \mathrm{d}x \right)\, \theta \, \mathrm{d}t.
\end{align*}
Note that $\partial_t \rho^m$ is integrable since $\rho^m \in W^{1,\infty}(Q)$ by \eqref{eq-good}. From the last equality, we deduce that
\begin{align*}
 \langle \partial_t \lambda^m, \phi \rangle = \left\langle \partial_t \varrho^m, \frac{\phi}{\zeta(\rho^m)} \right\rangle - \int_\Omega \varrho^m \, \phi \, \frac{\zeta^\prime(\rho^m) \, \partial_t \rho^m}{\zeta(\rho^m)^2} \, \mathrm{d}x \qquad \mbox{a.e. on $\Omega$}
\end{align*}
for all $\phi \in W^{1,2}(\Omega)$. It follows that
\begin{align*}
|\langle \partial_t \lambda^m, \phi \rangle| \leq \| \partial_t \varrho^m \|_{(W^{1,2}(\Omega))'} \left\| \frac{\phi}{\zeta(\rho^m)} \right\|_{W^{1,2}(\Omega)} + \| \varrho^m \|_{L^2(\Omega)} \left\| \phi \, \frac{\zeta^\prime(\rho^m) \, \partial_t \rho^m}{\zeta(\rho^m)^2} \right\|_{L^2(\Omega)}.
\end{align*}
Note that by \eqref{eq-varrhobd} we have that
$\| \partial_t \varrho^m \|_{(W^{1,2}(\Omega))'} \leq C(m)$ and $\| \varrho^m \|_{L^2(\Omega)} \leq C(m)$. 
Also, by using $\| \nabla_x \rho^m \|_{L^\infty(Q)} \leq C(m)$ and $\| \partial_t \rho^m \|_{L^\infty(Q)} \leq C(m)$, we obtain the following bounds
\begin{align*}
 \left\| \frac{\phi}{\zeta(\rho^m)} \right\|_{W^{1,2}(\Omega)} &\leq C(m) \| \phi \|_{W^{1,2}(\Omega)}, \\
 \left\| \phi \, \frac{\zeta^\prime(\rho^m) \, \partial_t \rho^m}{\zeta(\rho^m)^2} \right\|_{L^2(\Omega)} &\leq C(m) \| \phi \|_{W^{1,2}(\Omega)}.
\end{align*}
Hence,
\begin{align*}
| \langle \partial_t \lambda^m, \phi \rangle| \leq C(m) \| \phi \|_{W^{1,2}(\Omega)},
\end{align*}
which then implies that
\begin{align*}
\| \partial_t \lambda^m \|^2_{L^2(0,T; (W^{1,2}(\Omega))')} = \int_0^T \| \partial_t \lambda^m \|^2_{(W^{1,2}(\Omega))'} dt \leq C(m).
\end{align*}
We have shown that 
\begin{align*}
 \lambda^m \in L^2(0,T;W^{1,2}(\Omega))\quad \mbox{and} \quad \partial_t \lambda^m \in L^2(0,T;(W^{1,2}(\Omega))'),
\end{align*}
whereby, upon defining $\alpha^m:= \omega - \lambda^m$, also
\begin{align}\label{eq-alpha}
  \alpha^m \in L^2(0,T;W^{1,2}(\Omega))\quad \mbox{and} \quad \partial_t \alpha^m \in L^2(0,T;(W^{1,2}(\Omega))').
\end{align}
It then follows from \eqref{eq256a} that, for any $\overline\varphi \in W^{1,2}(\Omega)$ and any $\theta \in C^\infty_0((0,T))$, we have
\begin{align*}
 0&=-\int_0^T \int_\Omega \zeta(\rho^m)\,\alpha^m\,\overline\varphi \,\partial_t\theta \, \mathrm{d}x \mathrm{d}t - \int_0^T \int_\Omega \zeta(\rho^m)\, \alpha^m \,\bm{v}^m\cdot (\nabla_x \, \overline\varphi)\, \theta \,\mathrm{d}x \, \mathrm{d}t 
 + \int_0^T \int_\Omega \nabla_x \alpha^m\cdot (\nabla_x \overline\varphi)\, \theta\, \mathrm{d}x\, \mathrm{d}t\\
 &=
 -\int_0^T \int_\Omega \alpha^m \left[\partial_t(\zeta(\rho^m)\theta) - (\partial_t \zeta(\rho^m))\,\theta\right]\overline\varphi\,\mathrm{d}x\, \mathrm{d}t\\
&\quad - \int_0^T\int_\Omega \alpha^m \left[\nabla_x\cdot (\zeta(\rho^m)\,\bm{v}^m\, \overline\varphi) - (\nabla_x\cdot (\zeta(\rho^m)\,\bm{v}^m))\,\overline\varphi\right]\theta \, \mathrm{d}x\, \mathrm{d}t\\
&\quad  + \int_0^T \int_\Omega \nabla_x \alpha^m\cdot (\nabla_x \overline\varphi)\, \theta\, \mathrm{d}x\, \mathrm{d}t\\
&= -\int_0^T \int_\Omega \alpha^m \left[\partial_t(\zeta(\rho^m)\, \theta)\, \overline\varphi + \nabla_x \cdot(\zeta(\rho^m)\,\bm{v}^m\,\overline\varphi)\,\theta \right]\mathrm{d}x\, \mathrm{d}t\\
&\quad + \int_0^T \int_\Omega\alpha^m\left[\partial_t\zeta(\rho^m) +\nabla_x\cdot (\zeta(\rho^m)\,\bm{v}^m)\right]\overline\varphi\,\theta \,\mathrm{d}x\, \mathrm{d}t\\ 
&\quad + \int_0^T \int_\Omega \nabla_x\alpha^m\cdot (\nabla_x\overline\varphi)\,\theta\, \mathrm{d}x\, \mathrm{d}t\\
&= -\int_0^T \int_\Omega \alpha^m \left[\partial_t(\zeta(\rho^m)\, \theta)\, \overline\varphi + \nabla_x \cdot(\zeta(\rho^m)\,\bm{v}^m\,\overline\varphi)\,\theta \right]\mathrm{d}x\, \mathrm{d}t + \int_0^T \int_\Omega \nabla_x\alpha^m\cdot (\nabla_x\overline\varphi)\,\theta\, \mathrm{d}x\, \mathrm{d}t,
 \end{align*}
where in the transition to the last line we made use the fact that the renormalized equation \eqref{eq-renorm-new} satisfied by $\rho^m$, with $\beta=\zeta$, holds almost everywhere on $Q = \Omega \times (0,T)$. We then deduce from the last equality that 
\begin{align*}
\int_0^T \langle \partial_t \alpha^m , \zeta(\rho^m)\,\overline\varphi\rangle\, \theta \,\mathrm{d}t + \int_0^T \left(\int_\Omega \nabla_x \alpha^m \cdot \zeta(\rho^m)\,\bm{v}^m\,\overline\varphi\,\mathrm{d}x\right)\theta\,\mathrm{d}t
 + \int_0^T \left(\int_\Omega \nabla_x\alpha^m\cdot (\nabla_x\overline\varphi) \mathrm{d}x\right) \theta \,\mathrm{d}t = 0, 
\end{align*}
for all $\overline\varphi \in W^{1,2}(\Omega)$ and all $\theta \in C^\infty_0((0,T))$. 
Hence, also
\begin{align}\label{eq-alpha-rev}
\langle \partial_t \alpha^m , \zeta(\rho^m)\,\overline\varphi\rangle\, + \int_\Omega \nabla_x \alpha^m \cdot \zeta(\rho^m)\,\bm{v}^m\,\overline\varphi\,\mathrm{d}x 
 + \int_\Omega \nabla_x\alpha^m\cdot \nabla_x\overline\varphi\, \mathrm{d}x  = 0\qquad \mbox{a.e. on $Q$}
\end{align}
for all $\overline\varphi \in W^{1,2}(\Omega)$. Our objective is to show that $\alpha^m = \omega-\varrho^m \geq 0$ a.e. on $Q$. To this end, it seems tempting to take $\overline\varphi=[\alpha^m]_{-}$ in \eqref{eq-alpha-rev}; however, the calculation that would by use of the renormalized equation \eqref{eq-renorm-new} satisfied by $\rho^m$  with $\beta=\zeta$ result in the desired inequality
\[ \int_\Omega \zeta(\rho)\,([\alpha^m(x,t)]_{-})^2\, \mathrm{d}x \leq  \int_\Omega \zeta(\rho)\,([\alpha^m(x,0)]_{-})^2\, \mathrm{d}x = \int_\Omega \zeta(\rho)\,([\omega-\varrho^m_0(x)]_{-})^2\, \mathrm{d}x = 0, \]
which would then ultimately imply that $[\omega -\varrho^m(x,t)]_{-} = 0$, a.e. on $Q$, is difficult to justify rigorously. The main obstacle in the approach is the limited regularity of $\alpha^m$ in conjunction with the fact that the function $s \in \mathbb{R} \mapsto [s]_{+} \in \mathbb{R}_{+}$ is only Lipschitz continuous. An equivalent way of phrasing this would be to say that we define $G(s):=\frac{1}{2}([s]_+)^2$ for $s \in \mathbb{R}$, and we take as our test function in \eqref{eq-alpha-rev} the function $\overline\varphi = G'(\alpha^m)$. We shall overcome this difficulty by making a different choice of the function $G$. 

For $\delta \in (0,1)$, let
\[ G_\delta(s) := \left\{ \begin{array}{cr} 
                          \frac{s^2-\delta^2}{2\delta} + s (\log \delta -1) + 1 & \mbox{for $s \leq \delta$},\\
                          s (\log s - 1) + 1  & \mbox{for $s \geq \delta$}.
                         \end{array}
                         \right.\]
It then follows that
\[ G'_\delta(s) = \left\{ \begin{array}{cr} 
                          \frac{s}{\delta} + \log \delta -1 & \mbox{for $s \leq \delta$},\\
                          \log s & \mbox{for $s \geq \delta$},
                         \end{array}
                         \right.\] 
and
\[ G''_\delta(s) = \left\{ \begin{array}{cr} 
                          1/\delta& \mbox{for $s \leq \delta$},\\
                          1/s  & \mbox{for $s \geq \delta$}.
                         \end{array}
                         \right.\] 
Clearly $G_\delta \in C^{2,1}(\mathbb{R})$, $G_\delta(s) \geq 0$ for all $s \in \mathbb{R}$, $G_\delta(1)=0$, $G_\delta$ is strictly convex, and in addition
\[ G_\delta(s) \geq \left\{ \begin{array}{cr} 
                          \frac{s^2}{2\delta} & \mbox{for $s \leq 0$},\\
                          0  & \mbox{for $s \geq 0$}.
                         \end{array}
                         \right.\]
We shall choose $\overline\varphi = G_\delta'(\alpha^m)$ in \eqref{eq-alpha-rev}, 
and will at the end of the calculation pass to the limit $\delta \rightarrow 0_+$.
Hence, a.e. on $(0,T)$, 
\begin{align*}
\langle \partial_t \alpha^m , \zeta(\rho^m)\,G_\delta'(\alpha^m)\rangle  + \int_\Omega \nabla_x \alpha^m \cdot \zeta(\rho^m)\,\bm{v}^m\,G_\delta'(\alpha^m)\,\mathrm{d}x 
 + \int_\Omega \nabla_x\alpha^m\cdot \nabla_x G_\delta'(\alpha^m)\, \mathrm{d}x  = 0.
\end{align*}
Equivalently,
\begin{align*}
\langle \partial_t \alpha^m , \,G_\delta'(\alpha^m)\, \zeta(\rho^m)\rangle  + \int_\Omega \zeta(\rho^m)\,\bm{v}^m\cdot \nabla_x G_\delta(\alpha^m)\,\mathrm{d}x 
 + \int_\Omega G_\delta''(\alpha^m)\,|\nabla_x \alpha^m|^2\, \mathrm{d}x  = 0\qquad \mbox{a.e. on $(0,T)$}.
\end{align*}
Thus, after partial integration in the second term on the left-hand side noting that $\bm{v}^m|_{\partial\Omega \times (0,T)} = 0$ and $\mbox{div}\,\bm{v}^m=0$ on $Q$ we have that
\begin{align*}
\langle \partial_t \alpha^m , \,G_\delta'(\alpha^m)\, \zeta(\rho^m)\rangle  - \int_\Omega \left(\bm{v}^m \cdot \nabla_x \zeta(\rho^m)\right)G_\delta(\alpha^m)\,\mathrm{d}x 
 + \int_\Omega G_\delta''(\alpha^m)\,|\nabla_x \alpha^m|^2\, \mathrm{d}x  = 0\qquad \mbox{a.e. on $(0,T)$}.
\end{align*}
Next, we invoke the renormalized equation \eqref{eq-renorm-new} to transform the second integral on the left-hand side further, resulting in
\begin{align}\label{eq-Gdelta}
\langle \partial_t \alpha^m , \,G_\delta'(\alpha^m)\, \zeta(\rho^m)\rangle + \int_\Omega \left(\partial_t \zeta(\rho^m)\right)G_\delta(\alpha^m)\,\mathrm{d}x 
 + \int_\Omega G_\delta''(\alpha^m)\,|\nabla_x \alpha^m|^2\, \mathrm{d}x  = 0\qquad \mbox{a.e. on $(0,T)$}.
\end{align}
Our objective is now to show that the first term of the left-hand side can be rewritten as follows: 
\[ \langle \partial_t \alpha^m , G_\delta'(\alpha^m)\, \zeta(\rho^m)\rangle
 = \langle \partial_t G_\delta(\alpha^m),  \zeta(\rho^m)\rangle\qquad \mbox{a.e. on $(0,T)$}.
\]
To this end, we extend $\alpha^m$ by $0$ from $\Omega \times [0,T]$ to $\Omega \times \mathbb{R}$, and we mollify the resulting function, still denoted by $\alpha^m$, with respect to $t$; to be more specific, for a nonnegative function $\chi \in C^\infty_0((0,T))$ such that $\int_{\mathbb{R}} \chi(t)\, \mathrm{d}t = 1$ and $\varepsilon \in (0,1)$, we let $\chi_\varepsilon(t)=\frac{1}{\varepsilon}\chi(\frac{t}{\varepsilon})$, and define
\begin{align*} \alpha^m_\varepsilon := \alpha^m \ast_t \chi_\varepsilon \in C^\infty_0(\mathbb{R};W^{1,2}(\Omega)),
\end{align*}
where $\ast_t$ denotes convolution with respect to $t$. If then follows from \eqref{eq-alpha} that
\begin{align}\label{eq-ameps1} 
\|\alpha^m_\varepsilon -\alpha^m\|_{L^2(0,T;W^{1,2}(\Omega))} \rightarrow 0 \qquad \mbox{as $\varepsilon \rightarrow 0_+$},
\end{align}
and, by Young's inequality for convolutions, 
\begin{align}\label{eq-ameps2} \|\partial_t\alpha^m_\varepsilon\|_{L^2(0,T;W^{-1,2}(\Omega))} \leq \|\partial_t\alpha^m\|_{L^2(0,T;W^{-1,2}(\Omega))}.
\end{align}

Now, because $\alpha^m_\varepsilon \in C^\infty_0(\mathbb{R}; W^{1,2}(\Omega))$, it follows by the chain rule and an application of H\"older's inequality that $\partial_t G_\delta(\alpha^m_\varepsilon) = G'_\delta(\alpha^m_\varepsilon)\,\partial_t \alpha^m_\varepsilon \in C([0,T];L^2(\Omega))$, and therefore, for any $\theta \in C^\infty_0((0,T))$, 
\begin{align}\label{eq-el000}
 \begin{aligned}
\int_0^T \langle \partial_t G_\delta(\alpha^m_\varepsilon),  \zeta(\rho^m)\rangle \, \theta(t)\,\mathrm{d}t &= \int_0^T \int_\Omega \partial_t G_\delta(\alpha^m_\varepsilon)\, \zeta(\rho^m)\, \theta(t)\, \mathrm{d}x\,\mathrm{d}t\\
&=\int_0^T \int_\Omega G'_\delta(\alpha^m_\varepsilon)\,\partial_t\alpha^m_\varepsilon\, \zeta(\rho^m)\, \theta(t)\, \mathrm{d}x\,\mathrm{d}t\\
&= \int_0^T \int_\Omega G'_\delta(\alpha^m)\, \partial_t\alpha^m_\varepsilon\, \zeta(\rho^m)\, \theta(t)\, \mathrm{d}x\,\mathrm{d}t\\
&\quad + \int_0^T \int_\Omega (G'_\delta(\alpha^m_\varepsilon) - G'_\delta(\alpha^m))\,\partial_t\alpha^m_\varepsilon\, \zeta(\rho^m)\, \theta(t)\, \mathrm{d}x\,\mathrm{d}t\\
&=:\mathrm{T}_{1,\varepsilon} + \mathrm{T}_{2,\varepsilon}.
\end{aligned}
\end{align}
We shall now pass to the limit $\varepsilon \rightarrow 0_+$ in this equality. First, because, by \eqref{eq-ameps2}, $\partial_t \alpha^m_\varepsilon$ is, for each $m \geq 1$, uniformly bounded in $L^2(0,T;(W^{1,2}(\Omega))')$ and because $L^2(0,T;(W^{1,2}(\Omega))')$ is, by definition, the dual space of the normed linear space $L^2(0,T;W^{1,2}(\Omega))$, thanks to the Banach--Alaoglu theorem we can extract a weak* convergent subsequence, for which (without indicating the subsequence in our notation) we have by the uniqueness of the weak* limit that 
\[ \partial_t \alpha^m_\varepsilon \rightharpoonup^* \partial_t \alpha^m\qquad \mbox{in $L^2(0,T;(W^{1,2}(\Omega))')$}. \]
Because $G'_\delta(\alpha^m)\,\zeta(\rho^m)\,\theta \in L^2(0,T;W^{1,2}(\Omega))$, the predual of $L^2(0,T;(W^{1,2}(\Omega))')$ it therefore follows that
\[ \lim_{\varepsilon \rightarrow 0_+} \mathrm{T}_{1,\varepsilon} = \int_0^T \langle \partial_t\alpha^m, G'_\delta(\alpha^m)\, \zeta(\rho^m)\rangle \, \theta(t)\, \mathrm{d}t.\]
Next, we will show that $\mathrm{T}_{2,\varepsilon} \rightarrow 0$ as $\varepsilon \rightarrow 0_+$. We begin by noting that 
\begin{align} \label{eq-el00}
\begin{aligned}
 |\mathrm{T}_{2,\varepsilon}| &\leq \|\partial_t\alpha^m_\varepsilon\|_{L^2(0,T;(W^{1,2}(\Omega))')}\, \|\zeta(\rho^m)\, \theta\, (G'_\delta(\alpha^m_\varepsilon) - G'_\delta(\alpha^m))\|_{L^2(0,T;W^{1,2}(\Omega))}\\
 &\leq \sqrt{2} \left(\|\zeta(\rho^m)\,\theta\|_{L^\infty(Q)}^2 + \|\nabla_x(\zeta(\rho^m))\,\theta\|_{L^\infty(Q)}^2\right)^{\frac{1}{2}}\|G'_\delta(\alpha^m_\varepsilon) - G'_\delta(\alpha^m)\|_{L^2(0,T;W^{1,2}(\Omega))}.
\end{aligned}
\end{align}
It therefore remains to show that 
\begin{align} \label{eq-el0}
\|G'_\delta(\alpha^m_\varepsilon) - G'_\delta(\alpha^m)\|_{L^2(0,T;W^{1,2}(\Omega))} \rightarrow 0\quad \mbox{as $\varepsilon \rightarrow 0_+$}.
\end{align}
First, observe that 
\begin{align}\label{eq-el1} 
\begin{aligned}
\|G'_\delta(\alpha^m_\varepsilon) - G'_\delta(\alpha^m)\|_{L^2(0,T;L^2(\Omega))} &\leq \|G''_\delta\|_{L^\infty(\mathbb{R})}
 \|\alpha^m_\varepsilon- \alpha^m\|_{L^2(0,T;L^2(\Omega))}\\
 & \leq \frac{1}{\delta}  \|\alpha^m_\varepsilon- \alpha^m\|_{L^2(0,T;L^2(\Omega))} \rightarrow 0\qquad \mbox{as $\varepsilon \rightarrow 0_+$}.
\end{aligned}
\end{align}
Next we shall show that $\|\nabla_x(G'_\delta(\alpha^m_\varepsilon)) - \nabla_x(G'_\delta(\alpha^m))\|_{L^2(0,T;L^{2}(\Omega))} \rightarrow 0$. We begin by observing that
\[ \nabla_x(G'_\delta(\alpha^m_\varepsilon)) = G''_\delta(\alpha^m_\varepsilon)\,\nabla_x \alpha^m_\varepsilon.\]
Thanks to the strong conference \eqref{eq-ameps1}, for $m$ fixed, there exists a subsequence (with respect to $\varepsilon$, not indicated) such that $\alpha^m_\varepsilon \rightarrow \alpha^m$ a.e. on $Q$. Because $G''_\delta \in C^{0,1}(\mathbb{R})$, it then follows that $G''_\delta(\alpha^m_\varepsilon) \rightarrow G''_\delta(\alpha^m)$ a.e. on $Q$. Furthermore, $0 \leq G''_\delta(\alpha^m_\varepsilon) \leq 1/\delta$ a.e. on $Q$. In addition, thanks to a `converse' of the Dominated Convergence Theorem, which asserts that each subsequence of a strongly convergent sequence in $L^1(Q)$ contains a dominated sub-subsequence (c.f., for example, Theorem 1 in \cite{MR0140657}), because by
\eqref{eq-ameps1} $|\alpha^m_\varepsilon - \alpha^m|^2 + |\nabla_x \alpha^m_\varepsilon - \nabla_x \alpha^m|^2 \rightarrow 0$ in $L^1(Q)$ as $\varepsilon \rightarrow 0_+$, there exists a nonnegative function $g \in L^1(Q)$ such that for a particular sub-subsequence (not indicated)
\begin{align}\label{eq-subsub} |\alpha^m_\varepsilon(x,t) - \alpha^m(x,t)|^2 + |\nabla_x \alpha^m_\varepsilon(x,t) - \nabla_x \alpha^m(x,t)|^2 \leq g(x,t)\qquad\mbox{for a.e. $(x,t) \in Q$}.
\end{align}
For this same sub-subsequence, 
\[ G''_\delta(\alpha^m_\varepsilon)\,\nabla_x \alpha^m_\varepsilon \rightarrow G''_\delta(\alpha^m)\,\nabla_x \alpha^m  \qquad \mbox{a.e. on $Q$}\]
and 
\[ |G''_\delta(\alpha^m_\varepsilon(x,t))\,\nabla_x \alpha^m_\varepsilon(x,t) -  G''_\delta(\alpha^m(x,t))\,\nabla_x \alpha^m(x,t)|^2 \leq \frac{2}{\delta^2}\,g(x,t) + \frac{4}{\delta^2}\,|\nabla_x \alpha^m(x,t)|^2\qquad \mbox{a.e. on $Q$}.
 \]
Thus, we can pass to the limit over this sub-subsequence to deduce by the Dominated Convergence Theorem that
\begin{align} \label{eq-el2}
\begin{aligned}
 &\|\nabla_x(G'_\delta(\alpha^m_\varepsilon)) - \nabla_x(G'_\delta(\alpha^m))\|_{L^2(0,T;L^{2}(\Omega))}^2 \\
 &\qquad = \int_Q  |G''_\delta(\alpha^m_\varepsilon(x,t))\,\nabla_x \alpha^m_\varepsilon(x,t) -  G''_\delta(\alpha^m(x,t))\,\nabla_x \alpha^m(x,t)|^2\, \mathrm{d}x\,\mathrm{d}t \rightarrow 0\quad \mbox{as $\varepsilon \rightarrow 0_+$.}
\end{aligned}
\end{align}
By passing to the limit over this sub-subsequence, \eqref{eq-el1} and \eqref{eq-el2} imply \eqref{eq-el0}, and then using \eqref{eq-el0} in \eqref{eq-el00}, again by passage to the limit with $\varepsilon \rightarrow 0_+$ over this sub-subsequence implies that $\lim_{\varepsilon \rightarrow 0_+} |\mathrm{T}_{2,\varepsilon}|=0$. Hence, from \eqref{eq-el000} we have, by passage to the limit over this sub-subsequence,
\begin{align}\label{eq-el3}
\lim_{\varepsilon \rightarrow 0_+} \int_0^T \langle \partial_t G_\delta(\alpha^m_\varepsilon),  \zeta(\rho^m)\rangle \, \theta(t)\,\mathrm{d}t
&= \int_0^T \langle \partial_t\alpha^m, G'_\delta(\alpha^m)\, \zeta(\rho^m)\rangle \, \theta(t)\, \mathrm{d}t.
\end{align}
On the other hand, thanks to the smoothness of $\alpha^m_\varepsilon$ with respect to $t$, use of the chain rule and since $\partial_t G_\delta(\alpha^m_\varepsilon) \in C([0,T]; L^2(\Omega))$, the expression on the left-hand side of \eqref{eq-el3} can be rewritten as follows:
\begin{align}\label{eq-el4}
\begin{aligned}
\int_0^T \langle \partial_t G_\delta(\alpha^m_\varepsilon),  \zeta(\rho^m)\rangle \, \theta(t)\,\mathrm{d}t
&= \int_0^T \int_\Omega  \partial_t G_\delta(\alpha^m_\varepsilon)\,  \zeta(\rho^m) \, \theta(t)\,\mathrm{d}x\, \mathrm{d}t\\
&= - \int_0^T \int_\Omega  G_\delta(\alpha^m_\varepsilon)\,  \partial_t(\zeta(\rho^m) \, \theta(t))\,\mathrm{d}x\, \mathrm{d}t.
\end{aligned}
\end{align}
Now, with the sub-subsequence under consideration $G_\delta(\alpha^m_\varepsilon) \rightarrow G_\delta(\alpha^m)$ as $\varepsilon \rightarrow 0_+$ a.e. on $Q$. Also, because $0 \leq G_\delta(s) \leq C_\delta (s^2 + 1)$, it follows from \eqref{eq-subsub} that 
\[ 0 \leq G_\delta(\alpha^m_\varepsilon(x,t)) \leq C_\delta (|\alpha^m_\varepsilon(x,t)|^2 + 1) \leq C_\delta + 2C_\delta \, g(x,t) + 2C_\delta\, |\alpha^m(x,t)|^2.\] 
As the right-hand side of this is a function in $L^1(Q)$, by the Dominated Convergence Theorem we can pass to the limit over the sub-subsequence under consideration to deduce strong convergence of $G_\delta(\alpha_\varepsilon^m)$ to $G_\delta(\alpha^m)$ in $L^1(Q)$ as $\varepsilon \rightarrow 0_+$. It then follows from \eqref{eq-el3} and \eqref{eq-el4} that
\begin{align}\label{eq-el5}
 - \int_0^T \int_\Omega  G_\delta(\alpha^m)\,  \partial_t(\zeta(\rho^m) \, \theta(t))\,\mathrm{d}x\, \mathrm{d}t = \int_0^T \langle \partial_t\alpha^m, G'_\delta(\alpha^m)\, \zeta(\rho^m)\rangle \, \theta(t)\, \mathrm{d}t.
\end{align}
Next, we multiply \eqref{eq-Gdelta} by $\theta$, integrate this over $(0,T)$, and use \eqref{eq-el5} to rewrite the resulting first term on the left-hand side; hence. 
\begin{align}\label{eq-el6}
\begin{aligned}
 - \int_0^T \int_\Omega  G_\delta(\alpha^m)\,  \partial_t(\zeta(\rho^m) \, \theta(t))\,\mathrm{d}x\, \mathrm{d}t + \int_0^T \int_\Omega \left(\partial_t \zeta(\rho^m)\right)G_\delta(\alpha^m)\,\theta(t)\, \mathrm{d}x\, \mathrm{d}t \\
\qquad  + \int_0^T \int_\Omega G_\delta''(\alpha^m)\,|\nabla_x \alpha^m|^2\, \theta(t) \,\mathrm{d}x \,\mathrm{d}t = 0.
\end{aligned}
\end{align}
As $\partial_t(\zeta(\rho^m) \, \theta) = \partial_t(\zeta(\rho^m))\, \theta + \zeta(\rho^m)\, \partial_t \theta$, the first integral on the left-hand side of \eqref{eq-el6} can be written as a sum of two integrals, the first of which cancels with the penultimate integral on the left-hand side of \eqref{eq-el6}; hence, 
\begin{align*}
\begin{aligned}
 - \int_0^T \int_\Omega  G_\delta(\alpha^m)\, \zeta(\rho^m) \, \partial_t\theta \,\mathrm{d}x\, \mathrm{d}t + \int_0^T \int_\Omega G_\delta''(\alpha^m)\,|\nabla_x \alpha^m|^2\, \theta  \,\mathrm{d}x \,\mathrm{d}t = 0.
\end{aligned}
\end{align*}
Equivalently, since $\theta$ is independent of $x$, 
\begin{align}\label{eq-el7}
\begin{aligned}
 - \int_0^T \left(\int_\Omega  G_\delta(\alpha^m)\, \zeta(\rho^m) \,\mathrm{d}x \right) \partial_t \theta \, \mathrm{d}t + \int_0^T \left(\int_\Omega G_\delta''(\alpha^m)\,|\nabla_x \alpha^m|^2 \,\mathrm{d}x \right) \theta \,\mathrm{d}t = 0
\end{aligned}
\end{align}
for all $\theta \in C^\infty_0((0,T))$. Consequently, 
\begin{align*}
\frac{\mathrm{d}}{\mathrm{d}t}\int_\Omega G_\delta(\alpha^m) \, \zeta(\rho^m)\,\mathrm{d}x 
 + \int_\Omega G_\delta''(\alpha^m)\,|\nabla_x \alpha^m|^2\, \mathrm{d}x  = 0\qquad \mbox{a.e. on $(0,T)$}.
\end{align*}
This implies, upon integration with respect to $t$ and thanks to the properties of $G_\delta$ that
\begin{align*}
&\int_\Omega G_\delta(\alpha^m(x,t)) \, \zeta(\rho^m(x,t))\,\mathrm{d}x 
 + \frac{1}{\delta} \int_0^t \int_\Omega |\nabla_x \alpha^m(x,s)|^2\, \mathrm{d}x \, \mathrm{d}s \\
&\quad  \leq \int_\Omega G_\delta(\alpha^m(x,0)) \, \zeta(\rho^m(x,0))\,\mathrm{d}x = \int_\Omega G_\delta(\omega - \lambda^m_0(x)) \, \zeta(\rho^m_0(x))\,\mathrm{d}x\qquad \mbox{for all $t \in (0,T)$}.
\end{align*}
Thus, 
\begin{align*}
&\int_\Omega G_\delta(\alpha^m(x,t)) \, \zeta(\rho^m(x,t))\,\mathrm{d}x 
 \leq  \int_\Omega G_\delta(\omega - \lambda^m_0(x)) \, \zeta(\rho^m_0(x))\,\mathrm{d}x\qquad \mbox{for all $t \in (0,T)$}.
\end{align*}
Let us now denote by $\Omega_{-}(t)$, for $t \in (0,T)$, 
the set of all $x \in \Omega$ such that $\alpha^m(x,t)\leq 0$. Once again, appealing to the properties of $G_\delta$, we have that $G_\delta(\alpha^m(s,t)) \geq |\alpha^m(s,t)|^2/(2\delta)$ for all $x \in \Omega_{-}(t)$. Therefore, 
\begin{align}\label{eq-finald}
& \zeta_{\min}\int_{\Omega_{-}(t)} |\alpha^m(x,t)|^2 \,\mathrm{d}x 
 \leq  2\delta \int_\Omega G_\delta(\omega - \lambda^m_0(x)) \, \zeta(\rho^m_0(x))\,\mathrm{d}x\qquad \mbox{for all $t \in (0,T)$}.
\end{align}
On the other hand, because $\omega - \lambda^m_0(x) \geq 0$ on $\Omega$, by passing to the limit $\delta \rightarrow 0_+$ it follows that 
\[ \lim_{\delta \rightarrow 0_+}\int_\Omega G_\delta(\omega - \lambda^m_0(x)) \, \zeta(\rho^m_0(x))\,\mathrm{d}x = \int_\Omega [(\omega - \lambda^m_0(x))\,(\log(\omega-\lambda^m_0(x)) -1) + 1]  \, \zeta(\rho^m_0(x))\,\mathrm{d}x.\]
Hence, by passing to the limit $\delta \rightarrow 0_+$ in \eqref{eq-finald} it follows that 
\[ \int_{\Omega_{-}(t)} |\alpha^m(x,t)|^2 \,\mathrm{d}x 
 \leq  0 \qquad \mbox{for all $t \in (0,T)$}.\]
Therefore $\mbox{meas}(\Omega_{-}(t))=0$ for all $t \in (0,T)$. In other words, $0 \leq \lambda^m(x,t) \leq \omega$ for a.e. $(x,t) \in Q$. Consequently,
\begin{equation}\label{eq263}
\| \lambda^m \|_{L^\infty(\Omega \times (0,T))} \leq \| \lambda^m_0 \|_{L^\infty(\Omega \times (0,T))} \leq \frac{\varrho_{\max}}{\zeta_{\min}} \leq C.
\end{equation}
Noting that $\zeta(\cdot) \leq \zeta_{\max}$, we obtain from \eqref{varrho} and \eqref{eq263} that
\begin{equation}\label{eq-varrho}
\| \varrho^m \|_{L^\infty(\Omega \times (0,T))} \leq \zeta_{\max} \| \lambda^m \|_{L^\infty(\Omega \times (0,T))} \leq C.
\end{equation}

By setting $\bar\varphi=\lambda^m$ in \eqref{eq256} and using that $\zeta(\rho^m)$ satisfies \eqref{eq222} we further deduce that
\[ \frac{1}{2} \frac{\mathrm{d}}{{\mathrm d}t}\int_\Omega 
\zeta(\rho^m(x,t)) [\lambda^m(x,t)]^2 \,\mathrm{d}x + 
\int_\Omega |\nabla_x \lambda^m(x,t)|^2 \,\mathrm{d}x = 0,\]
and therefore, upon integration with respect to $t$, also
\begin{equation*}
\int_0^T \int_\Omega |\nabla_x \lambda^m|^2 \,\mathrm{d}x\,\mathrm{d}t \leq C.
\end{equation*}
From the uniform estimate (\ref{eq236}) we deduce that
\begin{equation*}
\sup_{t \in (0,T)} \int_{\mathcal{O}} \tilde{\psi}^m(x,\bm{q},t) \log(1 + \tilde{\psi}^m(x,\bm{q},t)) \,\mathrm{d}x\,\mathrm{d}\bm{q} \leq C(\ell),
\end{equation*}
where $\tilde{\psi}^m := M^m \hat{\psi}^m$. By de la Vall\'{e}e Poussin's Theorem (Theorem 2.29 in \cite{MR2341508}) the sequence $\tilde{\psi}^m$ is uniformly equi-integrable. Hence, by the Dunford--Pettis Theorem (Theorem 2.54 in \cite{MR2341508}), the sequence $( \tilde{\psi}^m)_{m\geq 1}$ is weakly relatively compact in $L^1(\mathcal{O} \times (0,T))$, which implies the existence of a subsequence (not relabelled) such that
\begin{equation*}
\tilde{\psi}^m \rightharpoonup \tilde{\psi} \qquad \text{weakly in $L^1(\mathcal{O} \times (0,T))$}.
\end{equation*}
Since $M^m$ converges to $M$ uniformly in $C(\overline{D})$, we deduce that
\begin{equation*}
\hat{\psi}^m \rightharpoonup \frac{\tilde \psi}{M}=:\hat{\psi} \qquad \text{weakly in $L^1_{loc}(\mathcal{O} \times (0,T))$}.
\end{equation*}
\par
Next, we shall show that 
\begin{equation*}
\hat{\psi}^m \to \hat{\psi} \qquad \text{a.e. in $\mathcal{O} \times (0,T)$}.
\end{equation*}
Let $\mathcal{O}_0$ be a Lipschitz subdomain of $\mathcal{O}$ such that $\mathcal{O}_0 \subset \overline{\mathcal{O}_0} \subset \mathcal{O}$. Since $\mathcal{F}(s) = s \log s + 1 \geq s$ for all $s \in \mathbb{R}_{\geq 0}$ and $M^m$ is bounded below on $\mathcal{O}_0$ by a positive constant (which may depend on $\mathcal{O}_0$), we have from (\ref{eq236}) that
\begin{equation}\label{eq269}
\sup_{t \in (0,T)} \| \sqrt{\hat{\psi}^m(\cdot,t)} \|^2_{L^2(\mathcal{O}_0)} + \int_0^T \| \sqrt{\hat{\psi}^m} \|^2_{W^{1,2}(\mathcal{O}_0)} \,\mathrm{d}t \leq C(\mathcal{O}_0).
\end{equation}
Since $\mathcal{O}_0 \subset \mathcal{O} \subset \mathbb{R}^{(K+1)d}$, standard interpolation gives that
\begin{equation}\label{eq270}
\int_0^T \int_{\mathcal{O}_0} |\hat{\psi}^m|^{\frac{(K+1)d + 2}{d(K+1)}} \,\mathrm{d}x\,\mathrm{d}\bm{q}\,\mathrm{d}t = \int_0^T \int_{\mathcal{O}_0} \left|\sqrt{\hat{\psi}^m} \right|^{\frac{2((K+1)d + 2)}{d(K+1)}} \,\mathrm{d}x\,\mathrm{d}\bm{q}\,\mathrm{d}t \leq C(\mathcal{O}_0).
\end{equation}
The application of H\"{o}lder's inequality gives that
\begin{equation*}
\begin{split}
\int_0^T \int_{\mathcal{O}_0} |\nabla_{x,\bm{q}} \hat{\psi}^m|^p \,\mathrm{d}x\,\mathrm{d}\bm{q}\,\mathrm{d}t &= 2^p \int_0^T \int_{\mathcal{O}_0} \left| \nabla_{x,\bm{q}} \sqrt{\hat{\psi}^m} \right|^p \left| \sqrt{\hat{\psi}^m} \right|^p \,\mathrm{d}x\,\mathrm{d}\bm{q}\,\mathrm{d}t \\
&\leq 2^p \left( \int_0^T \int_{\mathcal{O}_0} \left| \nabla_{x,\bm{q}} \sqrt{\hat{\psi}^m} \right|^2 \,\mathrm{d}x\,\mathrm{d}\bm{q}\,\mathrm{d}t \right)^{\frac{p}{2}} \left( \int_0^T \int_{\mathcal{O}_0} \left| \sqrt{\hat{\psi}^m} \right|^{\frac{2p}{2-p}} \,\mathrm{d}x\,\mathrm{d}\bm{q}\,\mathrm{d}t \right)^{\frac{2-p}{2}} \\
&\leq C(\mathcal{O}_0),
\end{split}
\end{equation*}
provided that
\begin{equation}\label{eq272}
\frac{2p}{2-p} \leq \frac{2((K+1)d + 2)}{d(K+1)}.
\end{equation}
By selecting $p = \frac{d(K+1) + 2}{d(K+1)+1}$, which is the largest value satisfying (\ref{eq272}), we obtain
\begin{equation}\label{eq273}
\int_0^T \int_{\mathcal{O}_0} |\nabla_{x,\bm{q}} \hat{\psi}^m|^{\frac{d(K+1) + 2}{d(K+1)+1}} \,\mathrm{d}x\,\mathrm{d}\bm{q}\,\mathrm{d}t \leq C(\mathcal{O}_0).
\end{equation}
It directly follows from (\ref{eq263}) that
\begin{equation}\label{eq273a}
\| \tilde{\psi}^m \|_{L^\infty(Q; L^1(D))} \leq C,
\end{equation}
and therefore,
\begin{equation}\label{eq275}
\| \hat{\psi}^m \|_{L^\infty(\Omega_0 \times (0,T); L^1(D_0))} \leq C,
\end{equation}
where $\mathcal{O}_0 = \Omega_0 \times D_0$. Interpolating between (\ref{eq270}), (\ref{eq273}) and (\ref{eq275}) we see that for any two real numbers $q_1$ and $q_2$, with
\begin{equation*}
\frac{(K+1)d + 2}{(K+1)d} \leq q_1 < \infty, \qquad 1 < q_2 \leq \frac{(K+1)d + 2}{(K+1)d},
\end{equation*}
and satisfying the relation
\begin{equation*}
q_1 \left(1- \frac{1}{q_2} \right) = \frac{2}{(K+1)d},
\end{equation*}
we have that
\begin{equation*}
\| \hat{\psi}^m \|_{L^{q_1}(\Omega_0 \times(0,T); L^{q_2}(D_0))} \leq C(\mathcal{O}_0).
\end{equation*}
Since $\zeta(\cdot) \leq \zeta_{\max}$, using (\ref{eq237}) and H\"{o}lder's inequality we deduce that there exists a $\delta > 0$ such that
\begin{equation}\label{eq279}
\| \zeta(\rho^m) \bm{v}^m \hat{\psi}^m \|_{L^{1+\delta}(\mathcal{O}_0 \times (0,T))} \leq C(\mathcal{O}_0).
\end{equation}
Similarly, from (\ref{eq236}), we deduce that
\begin{equation}\label{eq280}
\| \zeta(\rho^m) \Lambda_\ell(\hat{\psi}^m)(\nabla_x \bm{v}^m) \bm{q} \|_{L^{1+\delta}(\mathcal{O}_0 \times (0,T))} \leq C(\mathcal{O}_0).
\end{equation}
To apply the Div-Curl Lemma, let us first define the following sequences of $(1 + d + Kd)$-component vector fields:
\begin{align}\nonumber
H^m &\coloneqq (M^m\zeta(\rho^m)\hat{\psi}^m, M^m \zeta(\rho^m) \hat{\psi}^m\bm{v}^m - M^m \nabla_x \hat{\psi}^m, M \zeta(\rho^m) \Lambda_\ell(\hat{\psi}^m)(\nabla_x \bm{v}^m) \bm{q} - M^m \mathbb{A}(\nabla_{\bm{q}} \hat{\psi}^m)), \\
\nonumber Q^m &\coloneqq ( (1 + \hat{\psi}^m)^\alpha, \underbrace{0, \dots,0}_\text{$(d+Kd)$-times} ),
\end{align}
for some $\alpha \in (0, 1/2)$. Consequently, using (\ref{eq236}), (\ref{eq279}) and (\ref{eq280}), we deduce the existence of subsequences (not relabelled) such that
\begin{align*}
H^m &\rightharpoonup H \qquad \text{weakly in $L^{1+\delta}(\mathcal{O}_0 \times (0,T); \mathbb{R}^{1 + d +Kd})$}, \\
Q^m &\rightharpoonup Q \qquad \, \text{weakly in $L^{\frac{1}{\alpha}}(\mathcal{O}_0 \times (0,T); \mathbb{R}^{1 + d +Kd})$},
\end{align*}
where, noting the uniform convergence of $M^m$ and the strong convergences of $\zeta(\rho^m)$ and $\bm{v}^m$,
\begin{align*}
H &\coloneqq (M\zeta(\rho)\hat{\psi}, M \zeta(\rho) \hat{\psi}\bm{v} - M \nabla_x \hat{\psi}, M \zeta(\rho) \overline{\Lambda_\ell(\hat{\psi})}(\nabla_x \bm{v}) \bm{q} - M \mathbb{A}(\nabla_{\bm{q}} \hat{\psi})), \\
\nonumber Q &\coloneqq ( \overline{(1 + \hat{\psi})^\alpha}, 0, \dots,0).
\end{align*}
It follows from (\ref{eq213}) that
\begin{equation*}
\divergence_{t, x, \bm{q}} H^m = 0 \qquad \text{in $\mathcal{O}_0 \times (0,T)$}.
\end{equation*}
Since $\alpha \in (0, 1/2)$, we obtain by using (\ref{eq269}) that
\begin{equation*}
\begin{split}
\int_0^T \int_{\mathcal{O}_0} | \curl_{t,x,\bm{q}} Q^m |^2 \,\mathrm{d}x\,\mathrm{d}\bm{q}\,\mathrm{d}t &= \int_0^T \int_{\mathcal{O}_0} | \nabla_{t,x,\bm{q}} Q^m - (\nabla_{t,x,\bm{q}} Q^m)^{\rm T} |^2 \,\mathrm{d}x\,\mathrm{d}\bm{q}\,\mathrm{d}t \\
&\leq C\int_0^T \int_{\mathcal{O}_0} |\nabla_{x,\bm{q}} (1 + \hat{\psi}^m)^\alpha|^2 \,\mathrm{d}x\,\mathrm{d}\bm{q}\,\mathrm{d}t \\
&\leq C\int_0^T \int_{\mathcal{O}_0} \left| \nabla_{x,\bm{q}} \sqrt{\hat{\psi}^m} \right|^2 \,\mathrm{d}x\,\mathrm{d}\bm{q}\,\mathrm{d}t \\
&\leq C(\mathcal{O}_0).
\end{split}
\end{equation*}
Hence, $(\divergence_{t,x,\bm{q}} H^m)_{m=1}^\infty$ is precompact in $W^{-1,2}(\mathcal{O}_0 \times (0,T))$ and $(\curl_{t,x,\bm{q}} Q^m)_{m=1}^\infty$ is precompact in $W^{-1,2}(\mathcal{O}_0 \times (0,T))$. By choosing $\alpha < \frac{\delta}{1+\delta}$ we deduce using the Div-Curl Lemma that
\begin{equation*}
H^m \cdot Q^m \rightharpoonup H \cdot Q \qquad \text{weakly in $L^1(\mathcal{O}_0 \times (0,T))$}.
\end{equation*}
In particular, we have that
\begin{equation*}
M^m \zeta(\rho^m) \hat{\psi}^m (1+\hat{\psi}^m)^\alpha \rightharpoonup M \zeta(\rho) \hat{\psi} \overline{(1+\hat{\psi})^\alpha}.
\end{equation*}
Since $M^m$ converges to $M$ uniformly and $\zeta(\rho^m)$ converges to $\zeta(\rho)$ strongly in $L^\infty(0,T; L^p(\Omega))$, $1 < p < \infty$, the above implies that
\begin{equation}\label{eq287}
\hat{\psi}^m (1+\hat{\psi}^m)^\alpha \rightharpoonup \hat{\psi} \overline{(1+\hat{\psi})^\alpha}.
\end{equation}
Since $(1+\hat{\psi}^m)^\alpha \rightharpoonup \overline{(1+\hat{\psi})^\alpha}$ in $L^1(\mathcal{O}_0 \times (0,T))$, we can add this to (\ref{eq287}), which gives
\begin{equation*}
(1 + \hat{\psi}^m)^{\alpha + 1} = (1 + \hat{\psi}^m)(1 + \hat{\psi}^m)^{\alpha} \rightharpoonup (1 + \hat{\psi})\overline{(1+\hat{\psi})^\alpha}.
\end{equation*}
Thanks to the weak lower-semicontinuity of the continuous convex function $s \in [0,\infty) \mapsto s^{\alpha+1} \in [0,\infty)$ we have that
\begin{equation*}
(1 + \hat{\psi})^{\alpha + 1} \leq (1 + \hat{\psi})\overline{(1+\hat{\psi})^\alpha},
\end{equation*}
which is equivalent to 
\begin{equation*}
(1 + \hat{\psi})^{\alpha } \leq \overline{(1+\hat{\psi})^\alpha}.
\end{equation*}
On the other hand, the function $s \in [0,\infty) \mapsto s^{\alpha} \in [0,\infty)$ is concave. Again, by the weak lower-semicontinuity of the continuous convex function $s \in [0,\infty) \mapsto -s^{\alpha} \in [0,\infty)$, we deduce that
\begin{equation*}
-(1 + \hat{\psi})^{\alpha} \leq - \overline{(1+\hat{\psi})^\alpha}.
\end{equation*}
Therefore,
\begin{equation*}
(1 + \hat{\psi})^{\alpha} = \overline{(1+\hat{\psi})^\alpha}.
\end{equation*}
Since $s \in [0,\infty) \mapsto s^{\alpha} \in [0,\infty)$ is strictly concave, thanks to Theorem 10.20 in \cite{MR2499296}, there exists a subsequence (not relabelled), such that
\begin{equation*}
\hat{\psi}^m \to \hat{\psi} \qquad \text{a.e. in $\mathcal{O}_0 \times (0,T)$}.
\end{equation*}
Since $M^m$ converges uniformly to $M$, we have that
\begin{equation*}
\tilde{\psi}^m \to \tilde{\psi} \qquad \text{a.e. in $\mathcal{O}_0 \times (0,T)$}.
\end{equation*}
Now we want to extend the pointwise convergence result of $\tilde{\psi}^m$ to the whole of our domain $\mathcal{O} \times (0,T)$. For this purpose we choose a nondecreasing sequence of nested sets $( \mathcal{O}_0^k)_{k=1}^\infty$, i.e., $\mathcal{O}^1_0 \subset \mathcal{O}^2_0 \subset \cdots \subset \mathcal{O}^k_0 \subset \cdots$, satisfying $\cup_{k=1}^\infty \mathcal{O}^k_0 = \mathcal{O}$. For each $k \in \mathbb{N}$, we deduce the existence of a subsequence of $(\tilde{\psi}^m)_{m=1}^\infty$ that is pointwise convergent to $\tilde{\psi}$ a.e. in $\mathcal{O}^k_0 \times (0,T)$. Arguing by a diagonal procedure we deduce that there exists a subsequence such that
\begin{equation}\label{eq313}
\tilde{\psi}^m \to \tilde{\psi} \qquad \text{a.e. in $\mathcal{O} \times (0,T)$}.
\end{equation}
Since $\tilde{\psi}^m$ is uniformly equi-integrable, using Vitali's Convergence Theorem (Theorem 2.24 in \cite{MR2341508}), we obtain that
\begin{equation}\label{eq296}
\tilde{\psi}^m \to \tilde{\psi} \qquad \text{strongly in $L^1(0,T; L^1(\mathcal{O}))$}.
\end{equation}
Interpolating between (\ref{eq273a}) and (\ref{eq296}) gives that
\begin{equation}\label{eq297}
M^m \hat\psi^m=\tilde{\psi}^m \to \tilde{\psi}= M \hat\psi \qquad \text{strongly in $L^p(Q; L^1(D))$, for all $p \in [1,\infty)$}.
\end{equation}
Thus, by recalling \eqref{eq253} and \eqref{rhomplus} it follows that
\[ \varrho^m \rightarrow \varrho \quad \mbox{strongly in $L^p(Q)$ for all $p \in [1,\infty)$},\quad \mbox{where}\quad \varrho(x,t)=\zeta(\rho) \int_D M(\bm{q})\hat\psi(x,\bm{q},t)\,\mathrm{d}\bm{q}.\]
Hence, and by recalling \eqref{rhom-strong}, we have that
\begin{align}\label{eq270.1}
\mu(\rho^{m},\varrho^{m}) &\to \mu(\rho,\varrho) \qquad \text{strongly in $L^p(Q)$ for all $p \in [1,\infty)$}.
\end{align}

For any measurable $U \subset (\mathcal{O} \times (0,T))$, with $|U| \leq \delta$, we use H\"{o}lder's inequality to deduce that
\begin{equation}\label{eq316}
\begin{split}
\int_U M^m |\nabla_{x,\bm{q}} \hat{\psi}^m| \,\mathrm{d}x\,\mathrm{d}\bm{q}\,\mathrm{d}t &= 2 \int_U M^m \left| \nabla_{x,\bm{q}} \sqrt{\hat{\psi}^m} \right| \left| \sqrt{\hat{\psi}^m} \right| \,\mathrm{d}x\,\mathrm{d}\bm{q}\,\mathrm{d}t \\
&\leq 2 \left( \int_U M^m \left| \nabla_{x,\bm{q}} \sqrt{\hat{\psi}^m} \right|^2 \,\mathrm{d}x\,\mathrm{d}\bm{q}\,\mathrm{d}t \right)^{\frac{1}{2}} \left( \int_U \tilde{\psi}^m \,\mathrm{d}x\,\mathrm{d}\bm{q}\,\mathrm{d}t \right)^{\frac{1}{2}} \\
&\leq C\varepsilon^{\frac{1}{2}},
\end{split}
\end{equation}
which follows from the uniform estimate (\ref{eq236}) and the uniform equi-integrability of $\tilde{\psi}^m$. By the Dunford--Pettis Theorem we can extract a subsequence such that
\begin{equation}\label{eq299}
M^m \nabla_{x,\bm{q}} \hat{\psi}^m \rightharpoonup \overline{M^m \nabla_{x,\bm{q}} \hat{\psi}^m} \qquad \text{weakly in $L^1(\mathcal{O} \times (0,T); \mathbb{R}^{d(K+1)})$}.
\end{equation}
From (\ref{eq273}) we deduce that $\nabla_{x,\bm{q}} \hat{\psi}^m$ weakly converges to $\nabla_{x,\bm{q}} \hat{\psi}$ locally in $L^1$. Since $M^m$ converges uniformly to $M$, we can identify the weak limit $\overline{M^m \nabla_{x,\bm{q}} \hat{\psi}^m}$ as $M \nabla_{x,\bm{q}} \hat{\psi}$. Analogously as in (\ref{eq299}) it also follows from (\ref{eq273}) that
\begin{equation}\label{eq300}
\sqrt{M^m} \nabla_{x,\bm{q}} \sqrt{\hat{\psi}^m} \rightharpoonup \sqrt{M} \nabla_{x,\bm{q}} \sqrt{\hat{\psi}} \qquad \text{weakly in $L^2(\mathcal{O} \times (0,T); \mathbb{R}^{d(K+1)})$}.
\end{equation}
Since $M^m \nabla_{x,\bm{q}} \hat{\psi}^m =2 \sqrt{M^m}  \sqrt{\hat{\psi}^m} \sqrt{M^m} \nabla_{x,\bm{q}} \sqrt{\hat{\psi}^m} = 2 \sqrt{\tilde{\psi}^m} \sqrt{M^m} \nabla_{x,\bm{q}} \sqrt{\hat{\psi}^m}$, by using a similar calculation as in (\ref{eq316}) we also see that
\begin{equation*}
\begin{split}
\int_Q \left( \int_D M^m |\nabla_{x,\bm{q}} \hat{\psi}^m| \,\mathrm{d}\bm{q} \right)^2 \,\mathrm{d}x\,\mathrm{d}t &= 4 \int_Q \left( \int_D \sqrt{\tilde{\psi}^m} \left|\sqrt{M^m} \nabla_{x,\bm{q}} \sqrt{\hat{\psi}^m} \right| \,\mathrm{d}\bm{q} \right)^2 \,\mathrm{d}x\,\mathrm{d}t \\
&\leq 4\int_Q \| \tilde{\psi}^m \|^2_{L^1(D)} \| \sqrt{M^m} \nabla_{x,\bm{q}} \sqrt{\hat{\psi}^m} \|^2_{L^2(D)} \,\mathrm{d}x\,\mathrm{d}t \\
&\leq 4 \| \lambda^m \|^2_{L^\infty(\Omega \times (0,T))} \int_Q \| \sqrt{M^m} \nabla_{x,\bm{q}} \sqrt{\hat{\psi}^m} \|^2_{L^2(D)} \,\mathrm{d}x\,\mathrm{d}t \\
&\leq C,
\end{split}
\end{equation*}
where we have used the estimates (\ref{eq236}) and (\ref{eq263}). Therefore, we can strengthen (\ref{eq299}) as follows:
\begin{equation}\label{eq301}
M^m \nabla_{x,\bm{q}} \hat{\psi}^m \rightharpoonup M \nabla_{x,\bm{q}} \hat{\psi} \qquad \text{weakly in $L^2(Q; L^1(D; \mathbb{R}^{d(K+1)}))$}.
\end{equation}
With the strong convergence results (\ref{eq250}), (\ref{eq253}) and (\ref{eq297}) for $\bm{v}^m$, $\zeta(\rho^m)$ and $\tilde{\psi}^m$ we deduce that
\begin{equation}\label{eq302}
M^m \zeta(\rho^m) \bm{v}^m \hat{\psi}^m \to M \zeta(\rho) \bm{v} \hat{\psi} \qquad \text{strongly in $L^p(Q; L^1(D))$},
\end{equation}
where $p \in [1, \frac{2(d+2)}{d})$. Since $\Gamma_\ell \in C_0^\infty((-2\ell, 2\ell))$, there exists a positive constant $C = C(\ell)$ such that
\begin{equation*}
|\Gamma_\ell(\hat{\psi}^m) - \Gamma_\ell(\hat{\psi})| \leq C|\hat{\psi}^m - \hat{\psi}|.
\end{equation*}
Hence, using (\ref{eq253}) and (\ref{eq297}),
\begin{equation*}
\zeta(\rho^m)\Lambda_\ell(\hat{\psi}^m) \to \zeta(\rho)\Lambda_\ell(\hat{\psi})  \qquad \text{strongly in $L^p(Q; L^1(D))$, for all $p \in [1,\infty)$}.
\end{equation*} 
It then follows from (\ref{eq248}) that
\begin{equation}\label{eq305}
M \zeta(\rho^m) \Lambda_\ell(\hat{\psi}^m)(\nabla_x \bm{v}^m) \rightharpoonup M \zeta(\rho) \Lambda_\ell(\hat{\psi})(\nabla_x \bm{v}) \qquad \text{weakly in $L^p(Q; L^1(D; \mathbb{R}^{d \times d}))$, for all $p \in [1,2)$}.
\end{equation}
Using (\ref{eq213}) and the convergence results (\ref{eq301}), (\ref{eq302}) and (\ref{eq305}) we obtain that
\begin{equation}\label{eq325}
\partial_t (M^m \zeta(\rho^m) \hat{\psi}^m) \rightharpoonup \partial_t (M \zeta(\rho) \hat{\psi}) \qquad \text{weakly in $L^p(Q; W^{-1,1}(D))$, for all $p \in [1,2)$}.
\end{equation}
From (\ref{eq253}) we deduce that
\begin{equation*}
\zeta(\rho^m) \to \zeta(\rho) \qquad \text{a.e. in $Q$}.
\end{equation*}
Therefore, using (\ref{eq313}) and the Dominated Convergence Theorem (thanks to the presence of the truncation $T_\ell$), we can let $m \to \infty$ in (\ref{eq218}) to deduce that
\begin{equation}\label{eq326}
\tau^m \to \tau \qquad \text{strongly in $L^1(Q; \mathbb{R}^{d \times d})$},
\end{equation}
where 
\begin{equation}\label{eq327}
\tau = -k \int_D \left[ K M \zeta(\rho)T_\ell(\hat{\psi})I + \sum_{j=1}^K \zeta(\rho) T_\ell(\hat{\psi})\nabla_{\bm{q}^j} M \otimes \bm{q}^j \right] \,\mathrm{d}\bm{q} \quad \text{a.e. in $Q$}.
\end{equation}
\par
Now let us reinstate the superscript $\ell$. Collecting the convergence results (\ref{eq263.1}), (\ref{eq248}), (\ref{eq250}), (\ref{eq269.1}), (\ref{eq270.1}), (\ref{eq253}), (\ref{eq296}), (\ref{eq301}), (\ref{eq302}), (\ref{eq305}), (\ref{eq325}) and (\ref{eq326}), and using the fact that $\bm{f}^m$ converges to $\bm{f}$ in $L^2(0,T; L^2(\Omega; \mathbb{R}^d))$, we can pass to the limit as $m \to \infty$ in (\ref{eq211})--(\ref{eq213}) to obtain the following:
\begin{equation}\label{eq328}
\int_0^T [ \langle \partial_t \rho^\ell, \eta \rangle - (\bm{v}^\ell \rho^\ell, \nabla_x \eta)] \,\mathrm{d}t = 0,\quad \text{for all $\eta \in L^1(0,T; W^{1, \frac{q}{q-1}}(\Omega))$}, 
\end{equation}
where $q \in (2,\infty)$ when $d=2$ and $q \in [3,6]$ when $d=3$,
\begin{equation}\label{eq329}
\begin{split}
&\int_0^T \langle \partial_t (\rho^\ell \bm{v}^\ell), \bm{w} \rangle \,\mathrm{d}t + \int_0^T [-(\rho^\ell \bm{v}^\ell \otimes \bm{v}^\ell, \nabla_x \bm{w}) + (\mu(\rho^\ell,\varrho^\ell)D(\bm{v}^\ell), \nabla_x \bm{w})] \,\mathrm{d}t \\
&\quad = \int_0^T [-(\tau^\ell, \nabla_x \bm{w}) +  (\rho^\ell \bm{f}, \bm{w})] \,\mathrm{d}t \quad \text{for all $\bm{w} \in L^s(0,T; W^{1,s}_{0,\divergence}(\Omega; \mathbb{R}^d))$\quad with $s>2$}, 
\end{split}
\end{equation}
and
\begin{equation}\label{eq330}
\begin{split}
&\int_0^T \left\langle \partial_t(M \zeta(\rho^\ell)\hat{\psi}^\ell), \varphi \right\rangle_{\mathcal{O}} -  \left(M \zeta(\rho^\ell) \bm{v}^\ell \hat{\psi}^\ell, \nabla_x \varphi \right)_{\mathcal{O}} -  \left(M \zeta(\rho^\ell) \Lambda_\ell(\hat{\psi}^\ell)(\nabla_x \bm{v}^\ell) \bm{q}, \nabla_{\bm{q}} \varphi \right)_{\mathcal{O}} \,\mathrm{d}t \\
&\quad  + \int_0^T (M \nabla_x \hat{\psi}^\ell, \nabla_x \varphi)_{\mathcal{O}} + \left( M \mathbb{A}(\nabla_{\bm{q}} \hat{\psi}^\ell), \nabla_{\bm{q}} \varphi \right)_{\mathcal{O}} \,\mathrm{d}t = 0 \quad  \text{for all $\varphi \in L^\infty(0,T; W^{1,\infty}(\mathcal{O}))$}.
\end{split}
\end{equation}
Letting $m \to \infty$ in (\ref{eq255}), we deduce the following energy inequality:
\begin{equation}\label{eq331}
\begin{split}
&k\int_{\mathcal{O}} M \zeta(\rho^\ell(\cdot, t)) \mathcal{F}(\hat{\psi}^\ell(\cdot,t)) \,\mathrm{d}x\,\mathrm{d}\bm{q} + \frac{1}{2}\int_\Omega \rho^\ell(\cdot,t) |\bm{v}^\ell(\cdot,t)|^2 \,\mathrm{d}x \\
&\quad + \int_0^t \int_\Omega \mu(\rho^\ell) |D(\bm{v}^\ell)|^2 \,\mathrm{d}x\,\mathrm{d}s + 4kC_1\int_0^t \int_{\mathcal{O}} M \left| \nabla_{x,\bm{q}} \sqrt{\hat{\psi}^\ell} \right|^2 \,\mathrm{d}x\,\mathrm{d}\bm{q}\,\mathrm{d}s \\
&\leq k\int_{\mathcal{O}} M \zeta(\rho_0) \mathcal{F}(T_\ell(\hat{\psi}_0)) \,\mathrm{d}x\,\mathrm{d}\bm{q} + \frac{1}{2}\int_\Omega \rho_0 |\bm{v}_0|^2 \,\mathrm{d}x + \int_0^t (\rho^\ell \bm{f}, \bm{v}^\ell) \,\mathrm{d}s.
\end{split}
\end{equation}
Letting $m \to \infty$ in (\ref{eq220}), (\ref{eq-varrho}) and (\ref{eq236}), by the weak lower semicontinuity of norms, we deduce the following a priori estimate:
\begin{equation}\label{eq332}
\begin{split}
&\sup_{t \in (0,T)} \left( \| \rho^\ell(\cdot, t) \|_{L^\infty(\Omega)} + \| \varrho^\ell(\cdot, t) \|_{L^\infty(\Omega)} \right) + 2k\zeta_{\min} \sup_{t \in (0,T)} \| \mathcal{F}(\hat{\psi}^\ell(\cdot,t)) \|_{L^1_M(\mathcal{O})} \\
&\quad + \rho_{\min}  \sup_{t \in (0,T)} \| \bm{v}^\ell(\cdot, t) \|^2_{L^2(\Omega; \mathbb{R}^d)} + \mu_{\min}c_0 \int_0^T \| \bm{v}^\ell \|^2_{W^{1,2}(\Omega; \mathbb{R}^d)}\,\mathrm{d}t + 8kC_1\int_0^T \left\| \sqrt{\hat{\psi}^\ell} \right\|^2_{W^{1,2}_M(\mathcal{O})} \,\mathrm{d}t \\
&\leq 2k\zeta_{\max} \int_{\mathcal{O}} M \mathcal{F}(T_\ell(\hat{\psi}_0)) \,\mathrm{d}x\,\mathrm{d}\bm{q} + \rho_{\max} \| \bm{v}_0 \|^2_{L^2(\Omega; \mathbb{R}^d)} + \frac{\rho_{\max}^2}{\mu_{\min}c_0} \int_0^T \| \bm{f} \|^2_{L^2(\Omega; \mathbb{R}^d)} \,\mathrm{d}t \\
&\leq C,
\end{split}
\end{equation}
where $C$ is a positive constant independent of $\ell$ and
\begin{equation}\label{varrhol}
\varrho^\ell(x,t) = \zeta(\rho^\ell(x,t)) \int_D M(\bm{q}) \hat{\psi}^\ell(x,\bm{q},t) \,\mathrm{d}\bm{q}.
\end{equation}
We are now ready to pass to the final limit, $\ell \rightarrow \infty$. 

\subsection{Passage to the limit with $\ell$}
From the uniform estimate (\ref{eq332}) (noting that the constant $C$ does not depend on $\ell$) we deduce the existence of a subsequence (not relabelled) such that, as $\ell \to \infty$,
\begin{align}
\begin{aligned}
\bm{v}^\ell &\rightharpoonup \bm{v} \qquad \text{weak* in $L^\infty(0,T; L^2_{0,\divergence}(\Omega; \mathbb{R}^d))$}, \\
\label{eq335} \bm{v}^\ell &\rightharpoonup \bm{v} \qquad \text{weakly in $L^2(0,T; W^{1,2}_{0,\divergence}(\Omega; \mathbb{R}^d))$}.
\end{aligned}
\end{align}
By standard interpolation
\begin{equation*}
\bm{v}^\ell \rightharpoonup \bm{v} \qquad \text{weakly in $L^{\frac{2(d+2)}{d}}(Q; \mathbb{R}^d)$}.
\end{equation*}
From the definition (\ref{eq327}) of $\tau^\ell$ and the definition of the truncation $T_\ell$ we deduce that
\begin{equation}\label{eq337}
\begin{split}
\| \tau^\ell \|^2_{L^2(0,T; L^2(\Omega))} &\leq C(M, \zeta_{\max}) \int_0^T \int_\Omega \left( \int_D T_\ell(\hat{\psi}^\ell) \,\mathrm{d}\bm{q} \right)^2 \,\mathrm{d}x\,\mathrm{d}t \\
&\leq C(M,\zeta_{\max}) \int_0^T \int_\Omega \left( \int_D \hat{\psi}^\ell \,\mathrm{d}\bm{q} \right)^2 \,\mathrm{d}x\,\mathrm{d}t \\
&\leq C,
\end{split}
\end{equation}
where $C$ is a positive constant independent of $\ell$, which then implies the existence of a subsequence (not relabelled) such that
\begin{equation}\label{eq338}
\tau^\ell \rightharpoonup \tau \qquad \text{weakly in $L^2(0,T; L^2(\Omega; \mathbb{R}^{d \times d}))$}.
\end{equation}
We can perform a similar argument as in (\ref{eq1.216})--(\ref{eq261}) to deduce that
\begin{equation*}
\| \bm{v}^\ell \|_{N^\gamma_2(0,T; L^2(\Omega; \mathbb{R}^d))} \leq C,
\end{equation*}
where $0 < \gamma < 1/4$ when $d=2$ and $0 < \gamma \leq 1/8$ when $d=3$. By the Aubin--Lions Lemma it follows that
\begin{equation}\label{eq340}
\bm{v}^\ell \to \bm{v} \qquad \text{strongly in $L^2(0,T; L^p(\Omega; \mathbb{R}^d))$},
\end{equation}
where $p \in [1,\infty)$ when $d=2$ and $p \in [1,6)$ when $d=3$.
\par
From the uniform estimate (\ref{eq332}) we deduce the existence of a subsequence (not relabelled) such that, as $\ell \to \infty$,
\begin{equation*}
\rho^\ell \rightharpoonup \rho \qquad \text{weak* in $L^\infty(Q)$}.
\end{equation*}
Using (\ref{eq328}), (\ref{eq332}) and Sobolev embedding we have that
\begin{equation*}
\| \partial_t \rho^\ell \|_{L^2(0,T; W^{-1,p}(\Omega))} \leq C,
\end{equation*}
where $p \in [1,\infty)$ when $d=2$ and $p \in [1,6]$ when $d=3$. Therefore, we have that
\begin{equation}\label{eq343}
\partial_t \rho^\ell \rightharpoonup \partial_t \rho \qquad \text{weakly in $L^2(0,T; W^{-1,p}(\Omega))$},
\end{equation}
where $p \in (1,\infty)$ when $d=2$ and $p \in (1,6]$ when $d=3$. We can also show that
\begin{equation}\label{eq344}
\rho^\ell \to \rho \qquad \text{strongly in $L^p(Q)$, for any $p \in [1,\infty)$};
\end{equation}
the proof proceeds similarly as in (\ref{eq1.125})--(\ref{eq194}). With the convergence result (\ref{eq340}) for $\bm{v}^\ell$ we can perform a similar argument as  in Theorem VI.1.9 in \cite{MR2986590} and strengthen the above convergence to get
\begin{equation}\label{eq344a}
\rho^\ell \to \rho \qquad \text{strongly in $C([0,T]; L^p(\Omega))$, for any $p \in [1,\infty)$}.
\end{equation}
From the assumption (\ref{eq26}) we further deduce that
\begin{align}
\label{eq345} \zeta(\rho^\ell) &\to \zeta(\rho) \qquad \text{strongly in $C([0,T]; L^p(\Omega))$, for any $p \in [1,\infty)$}.
\end{align}
Using (\ref{eq329}), (\ref{eq332}) and (\ref{eq337}) we have that
\begin{equation*}
\int_0^T \| \partial_t (\rho^\ell \bm{v}^\ell) \|^2_{W^{-1,2}(\Omega)} \,\mathrm{d}t \leq C,
\end{equation*}
where $C$ is a positive constant independent of $\ell$, which then implies that
\begin{equation}\label{eq348}
\partial_t (\rho^\ell \bm{v}^\ell) \rightharpoonup \partial_t (\rho \bm{v}) \qquad \text{weakly in $L^2(0,T; W^{-1,2}(\Omega; \mathbb{R}^d))$}.
\end{equation}
\par
Next, we will show the strong convergence of $\hat{\psi}^\ell$. From (\ref{eq332}) we have that
\begin{equation*}
\sup_{t \in (0,T)} \int_{\mathcal{O}} M(\bm{q}) \hat{\psi}^\ell(x,\bm{q},t) \log(1+ \hat{\psi}^\ell(x,\bm{q},t)) \,\mathrm{d}x\,\mathrm{d}\bm{q} \leq C,
\end{equation*}
where $C$ is a positive constant independent of $\ell$,
which implies the uniform equi-integrability of the sequence $(\hat{\psi}^\ell)_{\ell \geq 0}$. By the Dunford--Pettis Theorem the sequence $( \hat{\psi}^\ell)_{\ell \geq 0}$ is weakly relatively compact in $L^1_M(\mathcal{O} \times (0,T))$. Hence, there exists a $\hat{\psi} \in L^1_M(\mathcal{O} \times (0,T))$ and a subsequence (not relabelled) such that
\begin{equation*}
\hat{\psi}^\ell \rightharpoonup \hat{\psi} \qquad \text{weakly in $L^1_M(\mathcal{O} \times (0,T)$}.
\end{equation*}
The next step is to show that 
\begin{equation*}
\hat{\psi}^\ell \to \hat{\psi} \qquad \text{a.e. in $\mathcal{O} \times (0,T)$}.
\end{equation*}
Let $\mathcal{O}_0$ be a Lipschitz subdomain of $\mathcal{O}$ such that $\mathcal{O}_0 \subset \overline{\mathcal{O}} \subset \mathcal{O}$. Since $M$ is bounded below in $\mathcal{O}_0$, we have that
\begin{equation*}
\sup_{t \in (0,T)} \| \sqrt{\hat{\psi}^\ell(\cdot,t)} \|^2_{L^2(\mathcal{O}_0)} + \int_0^T \| \sqrt{\hat{\psi}^\ell} \|^2_{W^{1,2}(\mathcal{O}_0)} \,\mathrm{d}t \leq C(\mathcal{O}_0).
\end{equation*}
Since $\mathcal{O}_0 \subset \mathcal{O} \subset \mathbb{R}^{(K+1)d}$, standard interpolation gives that
\begin{equation*}
\int_0^T \int_{\mathcal{O}_0} |\hat{\psi}^\ell|^{\frac{(K+1)d + 2}{d(K+1)}} \,\mathrm{d}x\,\mathrm{d}\bm{q}\,\mathrm{d}t = \int_0^T \int_{\mathcal{O}_0} \left|\sqrt{\hat{\psi}^\ell} \right|^{\frac{2((K+1)d + 2)}{d(K+1)}} \,\mathrm{d}x\,\mathrm{d}\bm{q}\,\mathrm{d}t \leq C(\mathcal{O}_0).
\end{equation*}
The application of H\"{o}lder's inequality gives that
\begin{equation}\label{eq353}
\int_0^T \int_{\mathcal{O}_0} |\nabla_{x,\bm{q}} \hat{\psi}^\ell|^{\frac{d(K+1) + 2}{d(K+1)+1}} \,\mathrm{d}x\,\mathrm{d}\bm{q}\,\mathrm{d}t \leq C(\mathcal{O}_0).
\end{equation}
Thanks to (\ref{eq332}) we have that
\begin{equation*}
\| \hat{\psi}^\ell \|_{L^\infty(Q; L^1_M(D))} \leq C,
\end{equation*}
where $C$ is a positive constant independent of $\ell$. 
It then follows that for any $q_1 \in (1,\infty)$ there exists a $q_2 > 1$ such that
\begin{equation}\label{eq355}
\| \hat{\psi}^\ell \|_{L^{q_1}(\Omega_0 \times(0,T); L^{q_2}(D_0))} \leq C(\mathcal{O}_0).
\end{equation}
Hence, using (\ref{eq332}), (\ref{eq355}), the fact that $\zeta(\cdot) \leq \zeta_{\max}$ and H\"{o}lder's inequality, there exists a $\delta > 0$ such that
\begin{equation}\label{eq356}
\| \zeta(\rho^\ell) \bm{v}^\ell \hat{\psi}^\ell \|_{L^{1+\delta}(\mathcal{O}_0 \times (0,T))} + \| \zeta(\rho^\ell) \Lambda_\ell(\hat{\psi}^\ell)(\nabla_x \bm{v}^\ell) \bm{q} \|_{L^{1+\delta}(\mathcal{O}_0 \times (0,T))} \leq C(\mathcal{O}_0).
\end{equation}
To apply the Div-Curl Lemma we define
\begin{align}\nonumber
H^\ell &\coloneqq (M \zeta(\rho^\ell)\hat{\psi}^\ell, M \zeta(\rho^\ell) \hat{\psi}^\ell \bm{v}^\ell - M \nabla_x \hat{\psi}^\ell, M \zeta(\rho^\ell) \Lambda_\ell(\hat{\psi}^\ell)(\nabla_x \bm{v}^\ell) \bm{q} - M \mathbb{A}(\nabla_{\bm{q}} \hat{\psi}^\ell)), \\
\nonumber Q^\ell &\coloneqq ( (1 + \hat{\psi}^\ell)^\alpha, \underbrace{0, \dots,0}_\text{$(d+Kd)$-times} ),
\end{align}
for some $\alpha \in (0, \frac{\delta}{1+\delta})$. Consequently, using (\ref{eq332}), (\ref{eq353}) and (\ref{eq356}), we deduce the existence of subsequences (not relabelled) such that
\begin{align*}
H^\ell &\rightharpoonup H \qquad \text{weakly in $L^{1+\delta}(\mathcal{O}_0 \times (0,T); \mathbb{R}^{1 + d +Kd})$}, \\
Q^\ell &\rightharpoonup Q \qquad \, \text{weakly in $L^{\frac{1}{\alpha}}(\mathcal{O}_0 \times (0,T); \mathbb{R}^{1 + d +Kd})$},
\end{align*}
where, noting the strong convergences of $\zeta(\rho^\ell)$ and $\bm{v}^\ell$,
\begin{align}\nonumber
H &\coloneqq (M\zeta(\rho)\hat{\psi}, M \zeta(\rho) \hat{\psi}\bm{v} - M \nabla_x \hat{\psi}, M \zeta(\rho)\overline{\Lambda_\ell(\hat{\psi})}(\nabla_x \bm{v}) \bm{q} - M \mathbb{A}(\nabla_{\bm{q}} \hat{\psi})), \\
\nonumber Q &\coloneqq ( \overline{(1 + \hat{\psi})^\alpha}, 0, \dots,0).
\end{align}
Similarly as above we can show that $\divergence_{t,x,\bm{q}} H^\ell$ and $\curl_{t,x,\bm{q}} Q^\ell$ are precompact in $W^{-1,2}(\mathcal{O}_0 \times (0,T))$. Thanks to our choice of $\alpha$ we can apply the Div-Curl Lemma to deduce that
\begin{equation*}
H^\ell \cdot Q^\ell \rightharpoonup H \cdot Q \qquad \text{weakly in $L^1(\mathcal{O}_0 \times (0,T))$}.
\end{equation*}
In particular, we have that
\begin{equation*}
M \zeta(\rho^\ell) \hat{\psi}^\ell (1+\hat{\psi}^\ell)^\alpha \rightharpoonup M \zeta(\rho) \hat{\psi} \overline{(1+\hat{\psi})^\alpha}.
\end{equation*}
Thanks to the strong convergence of $\zeta(\rho^\ell)$ we deduce that
\begin{equation*}
\hat{\psi}^\ell (1+\hat{\psi}^\ell)^\alpha \rightharpoonup  \hat{\psi} \overline{(1+\hat{\psi})^\alpha},
\end{equation*}
which then implies that
\begin{equation*}
\hat{\psi}^\ell \to \hat{\psi} \qquad \text{a.e. in $\mathcal{O}_0 \times (0,T)$}.
\end{equation*}
By selecting a nondecreasing sequence of nested set $( \mathcal{O}_0^k)_{k\geq 1}$ such that $\cup_{k=1}^\infty \mathcal{O}_0^k = \mathcal{O}$ and deducing pointwise convergence on each $\mathcal{O}_0^k$, we can argue following a diagonal procedure to deduce that there exists a subsequence such that
\begin{equation*}
\hat{\psi}^\ell \to \hat{\psi} \qquad \text{a.e. in $\mathcal{O} \times (0,T)$}.
\end{equation*}
By Vitali's Convergence Theorem we obtain that
\begin{equation*}
\hat{\psi}^\ell \to \hat{\psi} \qquad \text{strongly in $L^1(0,T; L^1_M(\mathcal{O}))$}.
\end{equation*}
By standard interpolation with (\ref{eq332}) we deduce that
\begin{equation}\label{eq365}
\hat{\psi}^\ell \to \hat{\psi} \qquad \text{strongly in $L^p(Q; L^1_M(D))$ for all $p \in [1,\infty)$}.
\end{equation}
We can now use \eqref{eq345} and \eqref{eq365} to pass to the limit $\ell \rightarrow \infty$ in \eqref{varrhol} to deduce that
\[ \varrho^\ell \rightarrow \varrho\qquad \text{strongly in $L^p(Q)$ for all $p \in [1,\infty)$,\quad where $\varrho(x,t) = \zeta(\rho(x,t))\int_D M(\bm{q}) \hat\psi(x,\bm{q},t)\,\mathrm{d}\bm{q}$}.\]
Thus, by noting \eqref{eq26} and \eqref{eq344a}, we have that
\begin{align}\label{muvarrhol}
\mu(\rho^\ell,\varrho^\ell) &\to \mu(\rho,\varrho) \qquad \text{strongly in $L^p(Q)$  for all $p \in [1,\infty)$}.
\end{align}

Following a similar argument as in (\ref{eq316})--(\ref{eq301}) we deduce the following convergence results:
\begin{align}
\begin{aligned}
\nabla_{x,\bm{q}} \hat{\psi}^\ell &\rightharpoonup \nabla_{x,\bm{q}} \hat{\psi} \qquad \quad \text{weakly in $L^1(0,T; L^1_M(\mathcal{O}; \mathbb{R}^{d(K+1)}))$}, \\
\nabla_{x,\bm{q}} \sqrt{\hat{\psi}^\ell} &\rightharpoonup \nabla_{x,\bm{q}} \sqrt{\hat{\psi}} \qquad \text{weakly in $L^2(0,T; L^2(\mathcal{O}; \mathbb{R}^{d(K+1)}))$}, \\
\label{eq368} \nabla_{x,\bm{q}} \hat{\psi}^\ell &\rightharpoonup \nabla_{x,\bm{q}} \hat{\psi} \qquad \quad \text{weakly in $L^2(Q; L^1_M(D; \mathbb{R}^{d(K+1)}))$}.
\end{aligned}
\end{align}
From the Lipschitz continuity of $\Gamma$, and therefore the Lipschitz continuity of $\Lambda_\ell$, we obtain for any $p \in [1,\infty)$ that
\begin{equation*}
\begin{split}
\| \Lambda_\ell(\hat{\psi}^\ell) - \hat{\psi} \|_{L^p(Q; L^1_M(D))}  &\leq \| \Lambda_\ell(\hat{\psi}^\ell) - \Lambda_\ell(\hat{\psi}) \|_{L^p(Q; L^1_M(D))} + \| \Lambda_\ell(\hat{\psi}) - \hat{\psi} \|_{L^p(Q; L^1_M(D))} \\
&\leq C\| \hat{\psi}^\ell - \hat{\psi} \|_{L^p(Q; L^1_M(D))} + \| \Lambda_\ell(\hat{\psi}) - \hat{\psi} \|_{L^p(Q; L^1_M(D))}.
\end{split}
\end{equation*}
The first term in the above inequality converges to $0$ as $\ell \to \infty$ on noting (\ref{eq365}). The second term in the above inequality also converges to $0$ as $\ell \to \infty$ on noting that $\Lambda_\ell(\hat{\psi})$ converges to $\hat{\psi}$ almost everywhere in $\mathcal{O} \times (0,T)$ and applying the Dominated Convergence Theorem. Therefore,
\begin{equation*}
\Lambda_\ell(\hat{\psi}^\ell) \to \hat{\psi} \qquad \text{strongly in $L^p(Q; L^1_M(D))$ for all $p \in [1,\infty)$}.
\end{equation*}
Moreover, using (\ref{eq335}) and (\ref{eq345}) we deduce that
\begin{equation}\label{eq371}
\zeta(\rho^\ell) \Lambda_\ell(\hat{\psi}^\ell) (\nabla_x \bm{v}^\ell) \rightharpoonup \zeta(\rho) \hat{\psi} (\nabla_x \bm{v}) \qquad \text{weakly in $L^p(Q; L^1_M(D; \mathbb{R}^{d \times d}))$ for any $p \in [1,2)$}.
\end{equation}
With the convergence results (\ref{eq335}), (\ref{eq345}) and (\ref{eq365}) we deduce that
\begin{equation}\label{eq372}
\zeta(\rho^\ell) \bm{v}^\ell \hat{\psi}^\ell \to \zeta(\rho) \bm{v} \hat{\psi} \qquad \text{strongly in $L^p(Q; L^1_M(D; \mathbb{R}^d))$ for any $p \in \left[1, \frac{2(d+2)}{d} \right)$}.
\end{equation}
Consequently, using the identity (\ref{eq330}) and the convergence results (\ref{eq368}), (\ref{eq371}) and (\ref{eq372}), it follows that
\begin{equation}\label{eq374}
\partial_t (M \zeta(\rho^\ell) \hat{\psi}^\ell) \rightharpoonup \partial_t (M \zeta(\rho) \hat{\psi}) \qquad \text{weakly in $L^p(Q; W^{-1,1}(D))$ for any $p \in [1,2)$}.
\end{equation}
\par
Next, we shall deduce the expression for the extra-stress tensor $\tau$. Performing partial integration on (\ref{eq327}) and noting the fact that $M$ has zero trace we rewrite $\tau^\ell$ as follows:
\begin{equation*}
\begin{split}
\tau^\ell &= -k \int_D \left[ K M \zeta(\rho^\ell)T_\ell(\hat{\psi}^\ell)I + \sum_{j=1}^K \zeta(\rho^\ell) T_\ell(\hat{\psi}^\ell)\nabla_{\bm{q}^j} M \otimes \bm{q}^j \right] \,\mathrm{d}\bm{q} \\
& = k \sum_{j=1}^K \int_D M\zeta(\rho^\ell) \nabla_{\bm{q}^j} T_\ell(\hat{\psi}^\ell) \otimes \bm{q}^j \,\mathrm{d}\bm{q}.
\end{split}
\end{equation*}
Using (\ref{eq345}), (\ref{eq365}) and (\ref{eq368}) we pass to the limit as $\ell \to \infty$ in this expression to identify the weak limit $\tau$ in \eqref{eq338} as
\begin{equation*}
\tau = k \sum_{j=1}^K \int_D M\zeta(\rho) \nabla_{\bm{q}^j} \hat{\psi} \otimes \bm{q}^j \,\mathrm{d}\bm{q}.
\end{equation*}
\par
With the convergence results (\ref{eq335}), (\ref{eq338}), (\ref{eq340}), (\ref{eq343}), (\ref{eq344}), (\ref{eq348}),  (\ref{eq365}), (\ref{muvarrhol}), (\ref{eq368}), (\ref{eq371}), (\ref{eq372}) and (\ref{eq374}), we pass to the limit as $\ell \to \infty$ in (\ref{eq328})--(\ref{eq330}) to deduce that
\begin{equation*}
\int_0^T [ \langle \partial_t \rho, \eta \rangle - (\bm{v}\rho, \nabla_x \eta)] \,\mathrm{d}t = 0,\quad \text{for all $\eta \in L^1(0,T; W^{1, \frac{q}{q-1}}(\Omega))$}, 
\end{equation*}
where $q \in (2,\infty)$ when $d=2$ and $q \in [3,6]$ when $d=3$,
\begin{equation*}
\begin{split}
&\int_0^T \langle \partial_t (\rho \bm{v}), \bm{w} \rangle \,\mathrm{d}t + \int_0^T [-(\rho \bm{v} \otimes \bm{v}, \nabla_x \bm{w}) + (\mu(\rho,\varrho)D(\bm{v}), \nabla_x \bm{w})] \,\mathrm{d}t \\
&\quad = \int_0^T [-(\tau, \nabla_x \bm{w}) +  (\rho\bm{f}, \bm{w})] \,\mathrm{d}t \quad \text{for all $\bm{w} \in L^s(0,T; W^{1,s}_{0,\divergence}(\Omega; \mathbb{R}^d))$ \quad where $s>2$}, 
\end{split}
\end{equation*}
and
\begin{equation*}
\begin{split}
&\int_0^T \left\langle \partial_t(M \zeta(\rho)\hat{\psi}), \varphi \right\rangle_{\mathcal{O}} -  \left(M \zeta(\rho) \bm{v} \hat{\psi}, \nabla_x \varphi \right)_{\mathcal{O}} -  \left(M \zeta(\rho) \hat{\psi}(\nabla_x \bm{v}) \bm{q}, \nabla_{\bm{q}} \varphi \right)_{\mathcal{O}} \,\mathrm{d}t \\
&\quad  + \int_0^T (M \nabla_x \hat{\psi}, \nabla_x \varphi)_{\mathcal{O}} + \left( M \mathbb{A}(\nabla_{\bm{q}} \hat{\psi}), \nabla_{\bm{q}} \varphi \right)_{\mathcal{O}} \,\mathrm{d}t = 0 \quad  \text{for all $\varphi \in L^\infty(0,T; W^{1,\infty}(\mathcal{O}))$}.
\end{split}
\end{equation*}
Letting $\ell \to \infty$ in (\ref{eq331}) we deduce the following energy inequality, stated in \eqref{energy}:
\begin{equation*}
\begin{split}
&k\int_{\mathcal{O}} M \zeta(\rho(\cdot, t)) \mathcal{F}(\hat{\psi}(\cdot,t)) \,\mathrm{d}x\,\mathrm{d}\bm{q} + \frac{1}{2}\int_\Omega \rho(\cdot,t) |\bm{v}(\cdot,t)|^2 \,\mathrm{d}x \\
&\quad + \int_0^t \int_\Omega \mu(\rho,\varrho) |D(\bm{v})|^2 \,\mathrm{d}x\,\mathrm{d}s + 4kC_1\int_0^t \int_{\mathcal{O}} M \left| \nabla_{x,\bm{q}} \sqrt{\hat{\psi}} \right|^2 \,\mathrm{d}x\,\mathrm{d}\bm{q}\,\mathrm{d}s \\
&\leq k\int_{\mathcal{O}} M \zeta(\rho_0) \mathcal{F}(\hat{\psi}_0) \,\mathrm{d}x\,\mathrm{d}\bm{q} + \frac{1}{2}\int_\Omega \rho_0 |\bm{v}_0|^2 \,\mathrm{d}x + \int_0^t (\rho \bm{f}, \bm{v}) \,\mathrm{d}s,
\end{split}
\end{equation*}
where $\mathcal{F}(s) = s\log s + 1$ for $s>0$ and $\mathcal{F}(0) \coloneqq \lim_{s \to 0+} \mathcal{F}(s) = 1$.

It remains to prove the weak continuity properties stated in \eqref{weakccont} and the attainment of the initial data asserted in 
\eqref{eqinicond}. First, we set $\bm{w}(x,t) \coloneqq \chi_{[0,t]} \bm{u}(x)$ in (\ref{eq329}), where $\bm{u} \in W^{1,s}_{0,\divergence}(\Omega; \mathbb{R}^d)$ is arbitrary,
with $s>2$, to deduce that
\begin{equation}\label{eq380}
\begin{split}
&((\rho^\ell \bm{v}^\ell)(t), \bm{u}) + \int_0^t [-(\rho^\ell \bm{v}^\ell \otimes \bm{v}^\ell, \nabla_x \bm{u}) + (\mu(\rho^\ell,\varrho^\ell)D(\bm{v}^\ell), \nabla_x \bm{u})] \, \mathrm{d}\tau \\
&\quad = \int_0^t [-(\tau^\ell, \nabla_x \bm{u}) + (\rho^\ell\bm{f}, \bm{w})] \, \mathrm{d}\tau + (\rho_0 \bm{v}_0, \bm{u}).
\end{split}
\end{equation}
Letting $\ell \to \infty$ in (\ref{eq380}) and using the convergence results above we deduce, for almost all $t \in (0,T)$, that, for each $\bm{u} \in W^{1,s}_{0,\divergence}(\Omega; \mathbb{R}^d)$ with $s>2$,
\begin{equation}\label{eq-rv-cont}
\begin{split}
&((\rho\bm{v})(t), \bm{u}) + \int_0^t [-(\rho \bm{v} \otimes \bm{v}, \nabla_x \bm{u}) + (\mu(\rho,\varrho)D(\bm{v}), \nabla_x \bm{u})] \, \mathrm{d} \tau \\
&\quad = \int_0^t [-(\tau, \nabla_x \bm{u}) + \langle \rho \bm{f}, \bm{w} \rangle] \, \mathrm{d} \tau + (\rho_0 \bm{v}_0, \bm{u}).
\end{split}
\end{equation}
After a possible redefinition of $\rho\bm{v}$ on a set of measure zero, the above equation  holds for all $t \in (0,T)$. Thus, by letting $t \to 0_+$, we deduce that
\begin{equation}\label{eq382}
\lim_{t \to 0+} ((\rho\bm{v})(t), \bm{u}) = (\rho_0 \bm{v}_0, \bm{u}) \qquad \text{for all $\bm{u} \in W^{1,s}_{0,\divergence}(\Omega; \mathbb{R}^d)$ with $s>2$}.
\end{equation}
Replacing $t$ with $t'$ in \eqref{eq-rv-cont} and subtracting the resulting equality from \eqref{eq-rv-cont} we deduce, for almost every $t,t' \in (0,T)$ that, for each $\bm{u} \in W^{1,s}_{0,\divergence}(\Omega; \mathbb{R}^d)$ with $s>2$, 
\begin{equation}\label{eq-rv-cont1}
\begin{split}
&((\rho \bm{v})(t), \bm{u}) + \int_{t'}^t [-(\rho \bm{v} \otimes \bm{v}, \nabla_x \bm{u}) + (\mu(\rho,\varrho)D(\bm{v}), \nabla_x \bm{u})] \, \mathrm{d} \tau \\
&\quad = \int_{t'}^t [-(\tau, \nabla_x \bm{u}) + \langle \rho \bm{f}, \bm{w} \rangle] \, \mathrm{d} \tau + ((\rho \bm{v})(t'), \bm{u}).
\end{split}
\end{equation}
As the integrands in the integrals appearing on the left-hand side and right-hand side of \eqref{eq-rv-cont1} belong to $L^1(0,T)$, it follows by the Fundamental Theorem of Calculus for Lebesgue integral, again, after a possible of redefinition of $\rho\bm{v}$ on a set of measure zero, that $t \mapsto ((\rho \bm{v})(t),\bm{u})$ is, for each $\bm{u}\in W^{1,s}_{0,\divergence}(\Omega; \mathbb{R}^d)$ with $s>2$, absolutely continuous on $[0,T]$.

Similarly, setting $\varphi \coloneqq \chi_{[0,t]} \phi(x,\bm{q})$ in (\ref{eq330}), where $\phi \in W^{1,\infty}(\mathcal{O})$ is arbitrary, we deduce that
\begin{equation}\label{eq383}
\begin{split}
&( M (\zeta(\rho^\ell) \hat{\psi}^\ell)(t), \phi)_{\mathcal{O}} - \int_{0}^t \left[ \left(M \zeta(\rho^\ell) \bm{v}^\ell \hat{\psi}^\ell, \nabla_x \phi \right)_{\mathcal{O}} -  \left(M \zeta(\rho^\ell) \Lambda_\ell(\hat{\psi}^\ell)(\nabla_x \bm{v}^\ell) \bm{q}, \nabla_{\bm{q}} \phi \right)_{\mathcal{O}} \right] \, \mathrm{d} \tau \\
&\quad  + \int_{0}^t \left[ (M \nabla_x \hat{\psi}^\ell, \nabla_x \phi)_{\mathcal{O}} + \left( M \mathbb{A}(\nabla_{\bm{q}} \hat{\psi}^\ell), \nabla_{\bm{q}} \phi \right)_{\mathcal{O}} \right] \, \mathrm{d} \tau = (M\zeta(\rho_0) \hat{\psi}_0, \phi )_{\mathcal{O}}.
\end{split}
\end{equation}
Letting $\ell \to \infty$ in (\ref{eq383}) and using the convergence results above we deduce, for almost all $t \in (0,T)$, that, for each $\phi \in W^{1,\infty}(\mathcal{O})$, 
\begin{equation*}
\begin{split}
&( M (\zeta(\rho) \hat{\psi})(t), \phi)_{\mathcal{O}} - \int_0^t \left[ \left(M \zeta(\rho) \bm{v} \hat{\psi}, \nabla_x \phi \right)_{\mathcal{O}} -  \left(M \zeta(\rho)\hat{\psi})(\nabla_x \bm{v}) \bm{q}, \nabla_{\bm{q}} \phi \right)_{\mathcal{O}} \right] \, \mathrm{d} \tau \\
&\quad  + \int_0^t \left[ (M \nabla_x \hat{\psi}, \nabla_x \phi)_{\mathcal{O}} + \left( M \mathbb{A}(\nabla_{\bm{q}} \hat{\psi}), \nabla_{\bm{q}} \phi \right)_{\mathcal{O}} \right] \, \mathrm{d} \tau = (M\zeta(\rho_0) \hat{\psi}_0, \phi )_{\mathcal{O}}.
\end{split}
\end{equation*}
After a possible redefinition of $\zeta(\rho)\hat{\psi}$ on a set of measure zero, the above equation  holds for all $t \in (0,T)$. Letting $t \to 0_+$, we deduce that
\begin{equation}\label{eq385}
\lim_{t \to 0+} ( M (\zeta(\rho)\hat{\psi})(t), \phi)_{\mathcal{O}} = (M\zeta(\rho_0) \hat{\psi}_0, \phi )_{\mathcal{O}} \qquad \text{for all $\phi \in W^{1,\infty}(\mathcal{O})$}.
\end{equation}
Similarly as in the case of $\rho \bm{v}$ above,
after a possible redefinition on a set of measure zero, the function $t \mapsto ( M (\zeta(\rho)\hat{\psi})(t), \phi)_{\mathcal{O}}$
is absolutely continuous on $[0,T]$ for each $\phi \in W^{1,\infty}(\mathcal{O})$.
This completes the proofs of the assertions \eqref{weakccont} and \eqref{eqinicond}, and thereby the proof of Theorem \ref{thm1} is also complete.

\bibliographystyle{abbrv}
\bibliography{Incompressible}

\end{document}